\documentclass[12pt]{amsart}
\pdfoutput=1
\usepackage[letterpaper, margin=1in]{geometry}
\usepackage[utf8]{inputenc}
\usepackage{amsmath,amssymb,latexsym,amsthm}
\usepackage{MnSymbol}
\usepackage{graphicx}
\usepackage{tikz}
\usetikzlibrary{patterns}
\usepackage{tikz-cd}
\usetikzlibrary{positioning}
\usepackage[alphabetic]{amsrefs}
\usepackage{verbatim}
\usepackage{xspace}
\usepackage{enumitem} 
\usepackage{pinlabel}

\usepackage{subcaption}
\usepackage{caption}
\usepackage[export]{adjustbox} 

\ifx\pdfoutput\undefined
\usepackage{hyperref}
\else
\usepackage[pdftex,colorlinks=true,linkcolor=blue,urlcolor=blue,citecolor=violet]{hyperref}
\fi
\usepackage[nameinlink]{cleveref}
\usepackage{tipa}

\newcommand{\Map}{\mathrm{Map}}
\newcommand{\PMap}{\mathrm{PMap}}

\newcommand{\Out}{\mathrm{Out}}

\newcommand{\Homeo}{\mathrm{Homeo}}
\newcommand{\mcg}{\mathrm{Map}^{\pm}}
\newcommand{\R}{\mathbb{R}}

\newcommand{\Z}{\mathbb{Z}}
\newcommand{\Q}{\mathbb{Q}}
\newcommand{\Ends}{\mathrm{Ends}}
\theoremstyle{theorem}
\newtheorem{theorem}{Theorem}
\numberwithin{theorem}{section}
\newtheorem{corollary}[theorem]{Corollary}
\newtheorem{proposition}[theorem]{Proposition}

\newtheorem{lemma}[theorem]{Lemma}

\newtheorem{remark}[theorem]{Remark}
\newtheorem{question}[theorem]{Question}

\newtheorem*{ack}{Acknowledgements}

\newcommand{\comf}[1]{{\color{blue}[FERRAN: #1]}}
\newcommand{\comc}[1]{{\color{cyan}[CHRIS: #1]}}

\title{Fibrations of handlebodies}
\date{\today}

\author[Hern\'andez Hern\'andez]{Jes\'us Hern\'andez Hern\'andez}
\address{Centro de Ciencias Matem\'aticas}
\email{jhdez@matmor.unam.mx}
\urladdr{\url{https://sites.google.com/site/jhdezhdez/}}

\author[Leininger]{Christopher J. Leininger}
\address{Rice University}
\email{cjl12@rice.edu}
\urladdr{\url{https://sites.google.com/view/chris-leiningers-webpage/home}}

\author[Valdez]{Ferr\'an Valdez}
\address{Centro de Ciencias Matem\'aticas}
\email{ferran@matmor.unam.mx}
\urladdr{\url{https://www.matmor.unam.mx/~ferran/}}

\begin{document}

\begin{abstract}
We show that the open genus 2 handlebody admits uncountably-many fibrations over the circle with fiber homeomorphic to the Cantor tree surface with non-conjugate monodromies in the mapping class group.  The construction generalizes to produce uncountably many fibrations of other tame $3$--manifolds with infinite type fibers, including the blooming Cantor tree and many other types of surfaces.
\end{abstract}

\maketitle
\section{Introduction}
	\label{sec:introduction}


Let $V_g$ denote the interior of the genus $g$ handlebody. Our main result is the following.

\newcommand{\MainTheorem}{
For every primitive class $[\omega]\in H^1(V_2,\Z)$ there exist an uncountable family of fibrations $\{V_2\xrightarrow{p_\alpha}\mathbb{S}^1\}_{\alpha\in(0,1)\setminus\mathbb{Q}}$ with monodromy $\phi_\alpha$ such that:
\begin{enumerate}

	\item $[\omega]$ is dual to the fibration: $[\omega]=[p_\alpha^* d\theta]$, where $d\theta$ generates $H^1(\mathbb{S}^1,\Z)$
	\item every fiber of $p_\alpha$ is homeomorphic to the Cantor tree surface $\Sigma_0$, and
	\item if $Cl[\phi_\alpha]$ denotes the conjugacy class of $[\phi_\alpha]$ in $\Map(\Sigma_0)$, then $\{Cl[\phi_\alpha]\}_{\alpha\in(0,1)\setminus\mathbb{Q}}$ is uncountable.
\end{enumerate}
}

\begin{theorem}
	\label{thm:main1}
	\MainTheorem
\end{theorem}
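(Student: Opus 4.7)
The plan is to realize each fibration $p_\alpha$ as a quotient of a fixed $\Z$--cover of $V_2$ by a ``twisted'' deck action whose twist is an irrational rotation of the Cantor set of ends of angle $\alpha$, and then to distinguish the resulting monodromies by reading off this rotation number from the induced action on $\Ends(\Sigma_0)$.

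First I would identify the infinite cyclic cover $\widetilde{V}\to V_2$ determined by $[\omega]$. Since $V_2$ deformation retracts to a figure-eight graph $G$, the cover $\widetilde{V}$ retracts to the corresponding cyclic covering graph $\widetilde{G}$. Primitivity of $[\omega]$ ensures that the deck group is exactly $\Z$ and that $\widetilde{G}$ is connected; a direct case analysis shows $\widetilde{G}$ is a locally finite tree whose regular $3$-dimensional thickening is planar (genus $0$) with a Cantor set of ends. A tameness argument then gives $\widetilde{V}\cong\Sigma_0\times\R$, and a generator $T_0$ of the deck group can be written as $T_0(x,t)=(h(x),t-1)$ for some $h\in\Homeo(\Sigma_0)$.

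Next, for each $\alpha\in(0,1)\setminus\Q$ I would insert an irrational twist. A Denjoy-type construction produces a homeomorphism $r_\alpha$ of $\Sigma_0$, supported in a neighborhood of the ends, whose induced map on $\Ends(\Sigma_0)$ is conjugate to a Cantor minimal subsystem of the circle rotation of angle $\alpha$. Setting $\phi_\alpha=r_\alpha\circ h$ and $T_\alpha(x,t)=(\phi_\alpha(x),t-1)$, the quotient of $\Sigma_0\times\R$ by $\langle T_\alpha\rangle$ is the total space of a circle-fibration $p_\alpha$ with monodromy $\phi_\alpha$ and fiber $\Sigma_0$. The dual cohomology class is $[\omega]$ by construction, since $p_\alpha$ factors the projection $\Sigma_0\times\R\to\R\to\R/\Z$, which represents the chosen generator of $H^1$ of the cover.

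To conclude, I would verify two things. First, the total space is in fact $V_2$ for every $\alpha$: the idea is that $r_\alpha$ acts trivially on $\pi_1(\Sigma_0)$ and preserves the end-type pointwise in the appropriate sense, so $M_\alpha$ has the same fundamental group, end space, and tameness invariants as the untwisted mapping torus; a classification result for tame irreducible open $3$-manifolds then identifies $M_\alpha\cong V_2$. Second, for non-conjugacy, any conjugation of $\phi_\alpha$ to $\phi_\beta$ in $\Map(\Sigma_0)$ descends to a conjugation of their induced actions on $\Ends(\Sigma_0)$ in $\Homeo(\Ends(\Sigma_0))$; the Poincar\'e rotation number $\rho(\phi_\alpha)=\alpha\in\R/\Z$ of such a Cantor minimal system is a complete conjugacy invariant, so distinct $\alpha$ give distinct classes $Cl[\phi_\alpha]$, and the family is uncountable.

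The hardest step will be ensuring that $M_\alpha\cong V_2$ \emph{uniformly} in $\alpha$ while simultaneously arranging that the end-action of $\phi_\alpha$ genuinely detects $\alpha$. The tension is that $r_\alpha$ must be large enough near infinity to realize a nontrivial irrational rotation on $\Ends(\Sigma_0)$, yet tame enough (and sufficiently trivial on the $\pi_1$ of the fiber) that the topological type of the resulting $3$-manifold is unchanged. The cohomology computation in (1) and the uncountability in (3) should then follow cleanly once the construction in (2) is in place.
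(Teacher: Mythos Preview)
Your overall strategy---build the monodromy from a Denjoy-type rotation on the end space and distinguish conjugacy classes by that rotation---is the right idea, and it is essentially what the paper does. But several of the steps you sketch do not go through as written.

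\medskip

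\textbf{The cyclic cover.} Your claim that the covering graph $\widetilde G$ is a tree is false for every primitive $[\omega]$: for $a\mapsto 1$, $b\mapsto 0$ you get a bi-infinite line with a $b$-loop at every vertex; for $a\mapsto 1$, $b\mapsto 1$ you get a line with doubled edges. More seriously, the identification $\widetilde V\cong\Sigma_0\times\R$ is essentially equivalent to exhibiting \emph{one} fibration of $V_2$ with Cantor-tree fiber, which is already the main content of the theorem. The paper proceeds in the opposite direction: it first builds the monodromy explicitly---the Denjoy circle map $f_\alpha$, suspended to a homeomorphism $\widetilde F_\alpha$ of $S^2$ and restricted to the complement $\Sigma_\alpha$ of its invariant equatorial Cantor set---and then proves directly that the mapping torus $M_{F_\alpha}$ is $V_2$ by constructing an explicit exhaustion by closed genus~$2$ handlebodies (Proposition~\ref{Prop:mapping torus handlebody}). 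No appeal to a product structure on a cover is made.

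\medskip

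\textbf{The twist $r_\alpha$ and tameness.} A homeomorphism of $\Sigma_0$ that acts by a minimal (hence nontrivial) permutation on $\Ends(\Sigma_0)$ cannot act trivially on $\pi_1(\Sigma_0)$: loops encircling clopen subsets of ends are permuted along with those ends. So your proposed mechanism for showing $M_\alpha\cong V_2$---``same $\pi_1$, same ends, invoke a classification of tame irreducible open $3$-manifolds''---fails at the first step, and in any case no such classification is available; tameness of $M_\alpha$ is precisely the hard point. You correctly flag this as the crux, but you do not have a strategy for it. The paper's strategy is completely hands-on: the two fixed poles of $\widetilde F_\alpha$ give two $F_\alpha$-invariant disks whose mapping tori are solid tori, and flowing the single wandering interval $I_0$ under the suspension flow attaches a single $1$-handle between them; this yields the genus~$2$ handlebodies $W_t$ that exhaust $M_{F_\alpha}$.

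\medskip

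\textbf{Non-conjugacy.} Your rotation-number argument is in the right direction but is stated too strongly: Poincar\'e rotation number is not intrinsically defined on a Cantor system, and even once one defines it via the semiconjugacy to $\rho_\alpha$, it is not a \emph{complete} conjugacy invariant of the pair $(C_\alpha,f_\alpha|_{C_\alpha})$ in any obvious sense. The paper instead passes to the mapping torus of $f_\alpha|_{C_\alpha}$ (the Denjoy continuum $\mathbb D_\alpha$) and invokes Fokkink's theorem: $\mathbb D_\alpha\cong\mathbb D_{\alpha'}$ iff $\alpha'\in\mathrm{GL}(2,\Z)\cdot\alpha$. Conjugacy of $[\phi_\alpha]$ and $[\phi_{\alpha'}]$ in $\Map(\Sigma_0)$ forces conjugacy of their end-actions, hence homeomorphism of the associated Denjoy continua, hence $\alpha'\in\mathrm{GL}(2,\Z)\cdot\alpha$; since each orbit is countable, uncountably many classes remain.

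\medskip

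Finally, part~(1) in the paper is handled separately from the construction: once a single fibration exists, the transitivity of $\Homeo(V_2)$ on primitive classes in $H^1(V_2;\Z)$ (via surjectivity of $\Homeo(V_2)\to\Out(F_2)$) lets one realize any prescribed $[\omega]$.
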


By appropriately blowing up the orbit of a point $z\in\mathbb{S}^1$ by an irrational rotation $\rho_\alpha$ of the circle, $\alpha\in(0,1)\setminus\mathbb{Q}$, Denjoy produced, nowadays classical, examples of homeomorphisms $f_\alpha\in\Homeo(\mathbb{S}^1)$ which leave invariant a Cantor set $C_\alpha$ in the circle and act transitively on the set of complementary intervals. This homeomorphism $f_\alpha$ can then be extended to the $2$--sphere viewed as the suspension of $\mathbb S^1$, and then restricted to an orientation-preserving homeomorphism $F_\alpha$ of the Cantor tree surface.  We establish (2) by showing that the mapping torus $M_{F_\alpha}$ of $F_\alpha$ is homeomorphic to $V_2$. Item (3) follows from  the classification of \emph{Denjoy continua} by Fokkink \cite{Fokkink} (c.f.~Barge and Williams \cite{BargeWilliams}); see Theorem~\ref{thm:fokkink}.  Finally, (1) is a consequence of the fact that the diffeomorphism group of the handlebody acts transitively on the primitive integral cohomology classes.

Theorem~\ref{thm:main1} was inspired by the fact that the mapping torus of an end-periodic homeomorphism on an infinite genus surface with finitely many ends is a {\em tame} $3$--manifold; that is, the interior of compact 3-manifold with non-empty boundary; see \cite{Fenley92}.  A natural question is which other infinite type surfaces admit tame mapping tori.  The key feature of the construction used in the proof of Theorem~\ref{thm:main1} can be isolated and extracted to produce many other homeomorphisms whose mapping tori are tame; see Section~\ref{section:examples}.  For example, this leads to the following.

\newcommand{\TheoremTwo}
{The interior of the genus 4 handlebody $V_4$ has uncountably many fibrations over the circle with fiber homeomorphic to the blooming Cantor tree surface $\Sigma_\infty$, and pairwise non-conjugate monodromies.  Moreover, the fibrations can be taken to define the same primitive integral cohomology class.}
\begin{theorem}
  \label{thm:main2}
\TheoremTwo
\end{theorem}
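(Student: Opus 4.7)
The plan is to adapt the Denjoy construction used in Theorem~\ref{thm:main1} by attaching equivariant handle streams to produce fibers with infinite genus. Start with the Denjoy homeomorphism $f_\alpha$ of $\mathbb{S}^1$ preserving the Cantor set $C_\alpha$, extend it to $\bar f_\alpha$ on $\mathbb{S}^2$, and recall from Theorem~\ref{thm:main1} that the restricted map $F_\alpha$ on $\Sigma_0 = \mathbb{S}^2 \setminus C_\alpha$ has mapping torus $V_2$. Since $f_\alpha$ is minimal on $C_\alpha$, every $f_\alpha$-orbit of complementary intervals of $C_\alpha$ in $\mathbb{S}^1$ accumulates on all of $C_\alpha$, and the diameters of the intervals within a single orbit are summable and hence tend to zero.

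Select two distinct $f_\alpha$-orbits $\mathcal{O}_1$ and $\mathcal{O}_2$ of complementary intervals. In an $\bar f_\alpha$-equivariant manner, attach one handle inside a small tubular neighborhood of each interval in $\mathcal{O}_1\cup\mathcal{O}_2$, producing a new surface $\Sigma'$ together with a homeomorphism $G_\alpha$ extending $\bar f_\alpha$. Consider the open surface $S = \Sigma' \setminus C_\alpha$: its end space is $C_\alpha$, and since the attached handles accumulate on every point of $C_\alpha$, every end of $S$ is accumulated by genus. By the classification of surfaces, $S$ is homeomorphic to $\Sigma_\infty$.

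For the mapping torus analysis, note that $\bar f_\alpha$ is isotopic to the identity on $\mathbb{S}^2$, and removing the suspension of $C_\alpha$ from $\mathbb{S}^2 \times \mathbb{S}^1$ gives $V_2$. The $\bar f_\alpha$-equivariant family of handles attached along a single orbit $\mathcal{O}_i$ descends, in the quotient mapping torus, to a single $3$-dimensional $1$-handle attached to $V_2$. Two such orbits give two $1$-handles, so the total mapping torus is $V_2$ with two tamely attached $1$-handles, namely $V_4$. All such fibrations arise from the same suspension structure on $\mathbb{S}^2$, so they define a common primitive class in $H^1(V_4, \Z)$.

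Non-conjugacy of the monodromies $\phi_\alpha = [G_\alpha] \in \Map(\Sigma_\infty)$ follows as in Theorem~\ref{thm:main1}: the attached handles are disjoint from $C_\alpha$, so the dynamics of $G_\alpha$ restricted to $C_\alpha$ still agrees with $f_\alpha$, and the associated Denjoy continuum sits naturally in the mapping torus of $G_\alpha$. Fokkink's classification (Theorem~\ref{thm:fokkink}) then distinguishes the conjugacy classes across distinct irrational $\alpha$. The main obstacle is the $3$-manifold identification step: verifying that the two attached $1$-handles yield precisely $V_4$ (rather than some other tame $3$-manifold) requires showing that their attaching loops can be isotoped to bound disjoint disks in $V_2$, which should follow from the freedom in choosing the attaching regions within each orbit. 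A secondary care-point is ensuring that the equivariant handle sizes, dictated by the summable interval lengths, yield a well-defined topological surface of the correct homeomorphism type; this is routine given summability.
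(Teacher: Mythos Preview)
Your overall strategy --- attach an equivariant family of genus along an $F_\alpha$-orbit and analyze the resulting mapping torus --- is the paper's strategy, but your $3$-manifold computation has a counting error that breaks the conclusion. First, the Denjoy homeomorphism $f_\alpha$ of \S\ref{Sec:Denjoy circle} has exactly \emph{one} $f_\alpha$-orbit of complementary intervals (the $I_j$'s), since only one $\rho_\alpha$-orbit was blown up; there is no second orbit $\mathcal{O}_2$ available. More importantly, attaching a torus handle inside each $D_j$ is the same as replacing $D_j$ by a copy of $S_{1,1}$, and the shift on $\bigsqcup_j S_{1,1}$ has mapping torus $S_{1,1}\times\mathbb{R}$. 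Since $S_{1,1}$ is a $2$-disk with \emph{two} $2$-dimensional $1$-handles (check Euler characteristic: $\chi(S_{1,1})=-1$), the thickening $S_{1,1}\times[-1,1]$ is a $3$-ball with two $3$-dimensional $1$-handles. Thus a \emph{single} orbit of surface handles already adds two $1$-handles to $V_2$, giving $V_4$; two orbits (were they available) would yield $V_6$. The paper uses exactly one orbit for this reason.

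Your secondary ``care-point'' is also more serious than you indicate. The paper observes explicitly that there is \emph{no} $F_\alpha$-invariant, properly embedded infinite union of disks in $\Sigma_\alpha$: the iterates $F_\alpha^j(D_0)$ all have the same height in the suspension coordinate and therefore accumulate on interior points of $\Sigma_\alpha$, not merely on the ends. Summability of the interval lengths does not help here. One must first isotope $F_\alpha$ to a new homeomorphism $F_\alpha'$ admitting such an invariant family; the paper constructs this isotopy as an infinite composition of compactly supported isotopies that successively shrink the disks vertically.
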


In the end-periodic case, the ends of the mapping torus are in bijective correspondence with the orbits of ends.  This might suggest that one should never expect to have a tame $3$--manifold fiber over the circle where the fiber has uncountable many ends.  However, Theorems~\ref{thm:main1} and \ref{thm:main2} show that this intuition is wrong, since the end space in those examples is a Cantor set, and hence has uncountably many orbits.  The intution is not far off however, and the next theorem provides the correct link.

\begin{theorem} \label{thm:orbits to ends} Suppose $f \colon S\to S$ is any homeomorphism of a connected surface $S$. If there exists $x \in \Ends(S)$ with a dense orbit, $\overline{\langle f \rangle \cdot x} = \Ends(S)$, then $M_f$ has one end.
\end{theorem}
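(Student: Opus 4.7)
The plan is to describe the components of the complement of a compact set in $M_f$ combinatorially in terms of the components of a complement in $S$, and then use density of the orbit of $x$ to collapse all of them into a single equivalence class. Write $M_f = S \times [0,1]/{\sim}$ with $(y,1) \sim (f(y),0)$, and let $\pi\colon S \times [0,1] \to M_f$ denote the quotient map. Fix an exhaustion $\{K_n'\}_{n \geq 1}$ of $S$ by compact subsurfaces chosen so that every component of $S \setminus K_n'$ has noncompact closure; such an exhaustion can be arranged for any connected $2$-manifold by iteratively adjoining the relatively compact complementary components of a given compact set. The compact ``tubes'' $K_n := \pi(K_n' \times [0,1])$ then form a cofinal family of compact subsets of $M_f$, and $\Ends(M_f) = \varprojlim_n \pi_0(M_f \setminus K_n)$. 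A direct bookkeeping of the identifications $(y,0) \sim (f^{-1}(y),1)$ produces a natural bijection between $\pi_0(M_f \setminus K_n)$ and the set of equivalence classes on $\pi_0(S \setminus K_n')$ under the relation $\sim_f$ generated by declaring $B \sim_f B'$ whenever $f(B) \cap B' \neq \emptyset$.

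The dynamical core of the argument is to show that for every $n$ there is only a single such equivalence class. For each integer $m$, let $B_m$ be the component of $S \setminus K_n'$ containing the end $f^m(x) \in \Ends(S)$. Since $f$ is a homeomorphism, $f(B_m)$ is the component of $S \setminus f(K_n')$ containing $f^{m+1}(x)$, while $B_{m+1}$ is the component of $S \setminus K_n'$ containing the same end; hence the tail of any ray representing $f^{m+1}(x)$ lies in both sets, so $f(B_m) \cap B_{m+1} \neq \emptyset$ and $B_m \sim_f B_{m+1}$. By induction all of the $B_m$, $m \in \Z$, lie in a single equivalence class $\mathcal C$. Now given any component $B$ of $S \setminus K_n'$, the choice of exhaustion guarantees that $\overline{B}$ is noncompact, so $B$ contains at least one end $\varepsilon \in \Ends(S)$. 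Density of the orbit of $x$ yields integers $m_k$ with $f^{m_k}(x) \to \varepsilon$, and the definition of the topology on $\Ends(S)$ then forces $f^{m_k}(x) \in B$ for all sufficiently large $k$; consequently $B_{m_k} = B$, and therefore $B \in \mathcal C$.

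Hence every $M_f \setminus K_n$ is connected, so the inverse system defining $\Ends(M_f)$ consists of singletons and $M_f$ has exactly one end. The main obstacle in this outline is guaranteeing the existence of the ``principal'' exhaustion used above; it plays an essential role, since without it, stray infinite chains of relatively compact components of $S \setminus K_n'$ could in principle assemble into fictitious extra ends of $M_f$. Fortunately this existence is a standard fact about connected $2$-manifolds.
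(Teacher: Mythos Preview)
Your argument is correct. The combinatorial identification of $\pi_0(M_f\setminus K_n)$ with $\pi_0(S\setminus K_n')/{\sim_f}$ is accurate (every seam point lies in the closure of some $B\times(0,1)$, and two such cylinders meet at the seam exactly when $f(B')\cap B\neq\emptyset$), the cofinality of the tubes $K_n$ follows since $\pi$ is a closed map with finite fibers, hence proper, and the use of a principal exhaustion to guarantee each complementary component carries an end is legitimate and standard.

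The paper's route is somewhat different in packaging. Rather than analyzing complements directly, it first isolates a general statement (Proposition~\ref{P:Ends prop}): the fiber inclusion $i\colon S\hookrightarrow M_f$ is proper and the induced map $i_*\colon\Ends(S)\to\Ends(M_f)$ is a continuous, \emph{$f_*$--invariant} surjection. The theorem then follows in two lines: fibers of $i_*$ are closed (Hausdorffness of end spaces) and $f_*$--invariant, so the dense orbit forces every fiber to be all of $\Ends(S)$, whence $|\Ends(M_f)|=1$. Your computation is essentially an unpacking of that proposition at the level of a fixed compact set, with the equivalence classes of $\sim_f$ playing the role of the fibers of $i_*$ over $\pi_0(M_f\setminus K_n)$. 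What the paper gains is modularity---Proposition~\ref{P:Ends prop} is recorded as a result of independent interest and requires no special exhaustion of $S$; what your approach gains is self-containment, avoiding any appeal to functoriality of ends under proper maps.
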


This theorem is actually a consequence to a more general result concerning the induced map by the inclusion $i\colon S \to M_f$ given by $i(x) = (x,0)$ (see Proposition \ref{P:Ends prop}).

As a consequence, the fact that the action of the monodromies for the fibrations in Theorems~\ref{thm:main1} and \ref{thm:main2} have dense orbits in the end space, means the end space of the $3$--manifolds must be a single point.  While the authors had some vague intuition when they began investigating the homeomorphisms of the Cantor tree and blooming Cantor tree surfaces, admittedly, the formulation of Proposition~\ref{P:Ends prop} and 
 only came after the constructions from the proofs of Theorems~\ref{thm:main1} and \ref{thm:main2} were done.

End-periodic homeomorphisms were classified in the style of Nielsen-Thurston in unpublished work by Handel and Miller, and later explained and elaborated upon by Cantwell-Conlon-Fenley \cite{CanConFen}.  Their mapping tori arise naturally in theory of depth $1$ foliations \cite{Fenley92,Fenley97}.  Even though Denjoy homeomorphisms of the sphere minus a Cantor satisfy (vacuously) the definition of a strongy irreducible end-periodic homeomorphism, their dynamics differ greatly and as such should be treated separately. The Denjoy homeomorphisms of the sphere minus a Cantor set are examples of a {\em tame} homeomorphism of an infinite type surface which is not {\em `extra tame'}, in the sense of Bestvina-Fanoni-Tao \cite{BesFanTao}, and so do not fit into any known classification scheme.  Their mapping tori do not seem to be related to foliations, however.
Moreover, in a natural Gromov-hyperbolic graph it is not a loxodromic: $F_\alpha$ fixes the north pole, hence we have that $F_\alpha$ defines an element in the mapping class group of $\Sigma^*:=\R\setminus C$, where $C$ is a Cantor set.  The ray and loop graphs of $\Sigma^*$ are quasi-isometric infinite-diameter, Gromov hyperbolic metric spaces, see~\cite{Ba18}. The action of $F_\alpha$ on the ray graph is elliptic since the orbit of a ray going directly from the north pole to a point in $C_\alpha$ has diameter 1. In Section~\ref{sec:final thoughts}, we explore some of these topics and open questions.

\begin{ack} The first author was partially supported during the creation of this article by the UNAM-PAPIIT research grants IN114323 and IN101422. The second author would like to thank the other two authors and Centro de Ciencias Matem\'aticas in Morelia for their hospitality during a visit in June 2024, where this work began.  The second author was supported by NSF grant DMS-2305286. The third author was partially supported during the creation of this article by the UNAM-PAPIIT research grant IN101422 and IN106925.
	The third author wants to thank (7 year old) Anika Valdez for comming up, after discussions regarding the content of this paper, with the name \emph{fibroscocho} (pronounced \textipa{[fi.\;{\:b}ros.{"k}o.tS{o}]}) for any fibration $V_2\xrightarrow{p_\alpha}\mathbb{S}^1$ given by Theorem~\ref{thm:main1}. This is a portmanteau for the words in spanish \emph{fibra} (fiber), \emph{bosque} (forest) and \emph{ocho} (eight).
\end{ack}


\section{Denjoy homeomorphisms for spheres}
	\label{section:Denjoy}


Here we collect some preliminary facts we will need.

\subsection{Denjoy homeomorphisms.} \label{Sec:Denjoy circle} Let $\alpha \in [0,1]\setminus\Q$ be fixed and denote by $\mathbb{S}^1$ the unit circle on the complex plane. We briefly recall the construction of a {\em Denjoy homeomorphism} $f_{\alpha} \in \Homeo(\mathbb{S}^1)$.  We refer the reader to \cite{HasselblattKatok} Subsection 4.4.3, for example, for a more detailed description of this construction.

 Let $\rho_{\alpha}$ be the (irrational) rotation of $\mathbb{S}^1$ of angle $\alpha$. It is a well-known fact that every $\rho_\alpha$-orbit is dense.  For each $n \in \Z$ set $x_{n} \colon = \rho_{\alpha}^{n}(1)$ and denote by $I_{n}$ an interval of length $\frac{1}{2^{|n|}}$.  Next we ``blow up" each $x_n$ to $I_n$, effectively replacing it by the interval $I_n$ to produce a new 1-manifold $\mathbb{S}^{1}_{\alpha}$.  Since $\displaystyle{\sum_{n \in \mathbb Z}\frac{1}{2^{|n|}} = 3}$, the $1$--manifold $\mathbb{S}^{1}_{\alpha}$ is naturally isometric to a circle whose length has increased by $3$.  See Figure \ref{Figure:Denjoy1}.

Next, let $P_\alpha:\mathbb{S}^{1}_{\alpha}\to\mathbb{S}^1$ be the map sending each $I_n$ back down to $x_n$.
Given $x \in \mathbb{S}^{1}_{\alpha}$, either $x \in I_n$ for some $n \in \Z$ or $x \in P_\alpha^{-1}(\mathbb{S}^1 \setminus \{ x_n \mid n \in \Z\})$.  Then we define $f_\alpha \colon \mathbb S^1_\alpha \to \mathbb S^1_\alpha$ so that $f_{\alpha}$ maps $I_n$ to $I_{n+1}$ by an orientation preserving affine homeomorphism and $f_{\alpha}(x) = P_\alpha^{-1}(\rho_{\alpha}(P_\alpha(x)))$ if $x \in P_\alpha^{-1}(\mathbb{S}^1 \setminus \{x_n \mid n \in \mathbb Z\})$.  The map $P_\alpha$ defines a semi-conjugacy from $f_\alpha$ to $\rho_\alpha$; that is, $\rho_\alpha \circ P_\alpha = P_\alpha \circ f_\alpha$.  Finally, we note that $f_{\alpha}$ has $I_0$ as a wandering interval, and it acts minimally on $\mathbb{S}^{1}_{\alpha} \setminus \left(\bigcup_{n \in \Z} I_n\right)$.  Consequently, $\mathbb{S}^{1}_{\alpha} \setminus \left(\bigcup_{n \in \Z} I_n^o\right)$, where $I_n^o$ denotes the interior of $I_n$, is homeomorphic to a Cantor set that we denote by $C_{\alpha}$.


\begin{figure}[h]
    \includegraphics[scale=.35]{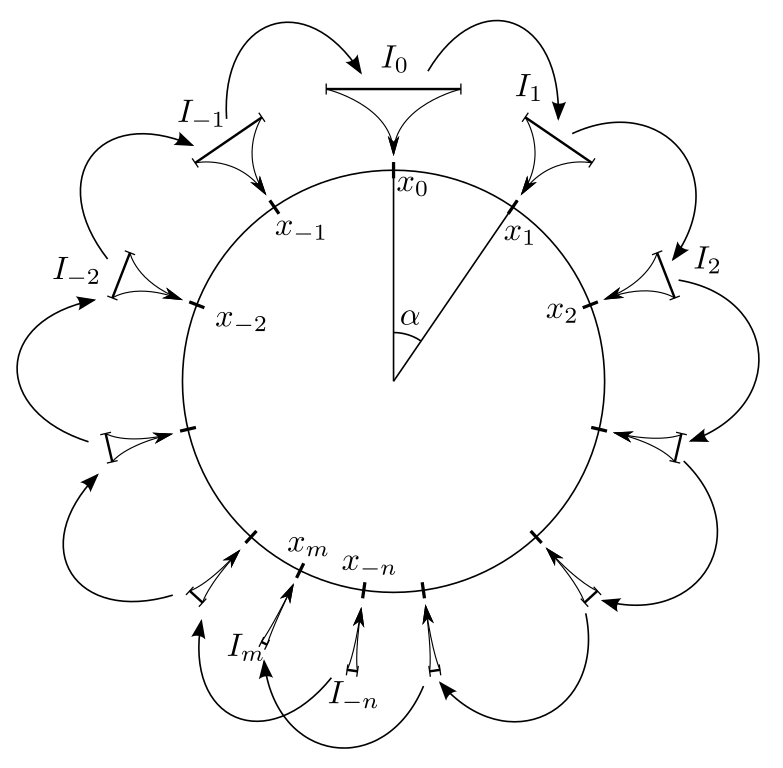}
    \caption{Denjoy construction for an irrational rotation leaving an invariant Cantor set. Figure due to Ilya Voyager, taken from \href{https://es.m.wikipedia.org/wiki/Archivo:Denjoy-example.svg}{Wikipedia Commons} .}\label{Figure:Denjoy1}
\end{figure}

\subsection{Extending Denjoy homeomorphisms to $\mathbb{S}^2$.} \label{Sec:Denjoy sphere} Consider the quotient of the cylinder $\mathbb{S}_{\alpha}^1 \times [-1,1]$ by the identifications $(x,-1) \sim (y,-1)$ and $(x,1) \sim (y,1)$ for all $x,y \in \mathbb S_{\alpha}^1$.  Define  $\mathbb S_{\alpha}^2:=\mathbb S_{\alpha}^1 \times [-1,1]/\sim$, and denote the associated quotient map by
\[ \pi_\alpha \colon \mathbb{S}_{\alpha}^1 \times [-1,1] \to \mathbb S_{\alpha}^2.\]
Note that $ \mathbb S_{\alpha}^2 $ is homeomorphic to $\mathbb S^2$. We define $\widetilde{F}_\alpha \colon \mathbb S^2_\alpha \to \mathbb S^2_\alpha$ to be the homeomorphism which is the descent of $f_\alpha \times \mbox{id}_{[-1,1]}$ to $\mathbb S^2_\alpha$ by $\pi_\alpha$.  That is, $\widetilde F_\alpha$ is the unique map defined by $\widetilde{F}_\alpha(\pi_\alpha(x,t)) = \pi_\alpha(f_\alpha(x),t)$.



The {\em north} and {\em south poles} of $\mathbb S_{\alpha}^2$ are the points $N = \pi_\alpha(\mathbb S_{\alpha}^1 \times \{1\})$ and $S = \pi_\alpha(\mathbb S_{\alpha}^1 \times \{-1\})$. These poles form the set of fixed points of  $\widetilde{F}_{\alpha}$. Note that $\widetilde{F}_{\alpha}$ leaves invariant every parallel of the sphere, $\pi_\alpha(\mathbb S^1_\alpha \times \{\ast\}) \subset \mathbb{S}_{\alpha}^2$. The equator is one such parallel and we write
\[ \mathbb S^1_\alpha \subset \mathbb S^2_\alpha,\]
with the identification coming from the obvious homeomorphism $\mathbb S^1_\alpha \cong \mathbb S^1_\alpha \times \{0\}$ composed with $\pi_\alpha$.   With this identification, $\widetilde{F}_{\alpha}$ restricted to the equator is precisely $f_\alpha$, and so the equatorial Cantor set, $C_\alpha \subset \mathbb S^1_\alpha \subset \mathbb S^2_\alpha$, is also $\widetilde F_\alpha$--invariant. Thus, $\widetilde{F}_{\alpha}$ restricts to a self-homeomorphism $F_{\alpha}$ of the surface $\Sigma_\alpha:=\mathbb{S}_{\alpha}^2 \setminus C_{\alpha}$,
\[
  F_\alpha \colon \Sigma_\alpha \to \Sigma_\alpha.
\]
By the Kerékjártó-Richards classification of surfaces (see \cite{Kerekjarto} and \cite{Richards63}), $\Sigma_\alpha$ and $\Sigma_\beta$ are homeomorphic surfaces and both are homeomorphic to the Cantor tree surface $\Sigma_0$: the surface obtained from gluing countably many pairs of pants in the pattern given by the 3-regular tree.  Choose a homeomorphism $\Sigma_\alpha \to \Sigma_0$ and conjugate $F_\alpha$ to a homeomorphism $\phi_\alpha \colon \Sigma_0 \to \Sigma_0$.  Any other choice of homeomorphism gives rise to conjugate homeomorphisms of $\Sigma_0$.  In particular, while $\phi_\alpha$ depends on the choice of homeomorphism $\Sigma_\alpha \to \Sigma_0$, the conjugacy class in the mapping class group $Cl([\phi_\alpha]) \subset \Map(\Sigma_0)$ is independent of this choice.

\subsection{Distinguishing conjugacy classes of Denjoy homeomorphisms of spheres}

We will need a classical result about \emph{Denjoy continua}.  A Denjoy continuum, $\mathbb{D}_\alpha$, is the suspension of the Denjoy homeomorphism $f_\alpha$ restricted to the Cantor set $C_\alpha$. More precisely:
$$
\mathbb{D}_\alpha:=C_\alpha\times[0,1]/(t,1)\sim(f_\alpha(t),0).
$$
The group ${\rm GL}(2,\Z)$ acts on $\mathbb{R}\setminus\Q$ by $\big(\begin{smallmatrix}
  a & b\\
  c & d
\end{smallmatrix}\big)\cdot\alpha=\frac{a\alpha+b}{c\alpha+d}$. We say that two irrational numbers $\alpha$ and $\alpha'$ are equivalent if $\alpha'\in{\rm GL}(2,\Z)\cdot\alpha$.

\begin{remark} In \cite{BargeWilliams}, the authors write ${\rm SL}(2,\Z)$ to denote the group of integer, $2 \times 2$ matrices with determinant $\pm 1$; we have chosen to use the more common notation ${\rm GL}(2,\Z)$ for this group.
\end{remark}

\begin{theorem}[\cite{Fokkink}, \cite{BargeWilliams}]
	\label{thm:fokkink}
Given $\alpha,\alpha' \in (0,1) \setminus \mathbb Q$, their Denjoy continua are homeomorphic, $\mathbb{D}_\alpha \cong \mathbb{D}_{\alpha'}$, if and only if $\alpha$ and $\alpha'$ are equivalent.
\end{theorem}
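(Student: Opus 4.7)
My plan is to reduce the continuum-classification problem to the classification of the underlying minimal Cantor systems up to flip conjugacy, and then to the stated arithmetic condition on $\alpha$ via an ordered cohomology invariant.

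The first step would be to argue that the Cantor-bundle structure $\pi_\alpha\colon \mathbb{D}_\alpha \to \mathbb{S}^1$ is topologically canonical up to reversing the base circle. The path components of $\mathbb{D}_\alpha$ are densely embedded lines — the leaves of the suspension lamination — and they are the only subsets of $\mathbb{D}_\alpha$ homeomorphic to $\R$ with the property that their intrinsic topology is non-compact while they are dense in $\mathbb{D}_\alpha$. Consequently any homeomorphism $h\colon \mathbb{D}_\alpha \to \mathbb{D}_{\alpha'}$ must permute leaves and descend to a homeomorphism of the leaf spaces, giving a conjugacy (up to flip) between the minimal Cantor systems $(C_\alpha, f_\alpha\vert_{C_\alpha})$ and $(C_{\alpha'}, f_{\alpha'}\vert_{C_{\alpha'}})$. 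The problem is thereby reduced to classifying these systems up to flip conjugacy.

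For the ``only if'' direction I would use the first Čech cohomology as an ordered abelian group. Presenting $\mathbb{D}_\alpha$ as an inverse limit of circles under degree-one tent-like maps read off from the continued-fraction expansion of $\alpha$, one computes $\check{H}^1(\mathbb{D}_\alpha;\Z) \cong \Z + \alpha\Z$ as a subgroup of $\R$, with the positive cone inherited from $\R$; equivalently, this is the Giordano–Putnam–Skau dimension group of $f_\alpha\vert_{C_\alpha}$, which has a unique invariant measure whose values on clopen sets lie in $\Z + \alpha\Z$. A homeomorphism of continua induces an order isomorphism (up to an overall sign), and a direct linear-algebra argument shows that $\Z + \alpha\Z \cong \Z + \alpha'\Z$ as ordered groups up to sign exactly when $\alpha'$ and $\alpha$ are $\mathrm{GL}(2,\Z)$-equivalent.

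For the converse I would realize each generator of $\mathrm{GL}(2,\Z)$ by an explicit homeomorphism. The translation $\alpha \mapsto \alpha+1$ is immediate since $f_{\alpha+1} = f_\alpha$, and the inversion $\alpha \mapsto -\alpha$ is realized by reversing the $[0,1]$-factor in the suspension, which replaces $f_\alpha$ by $f_\alpha^{-1}$. The substantive case is $\alpha \mapsto 1/\alpha$: writing $\alpha = [0; a_1, a_2, \ldots]$, the first-return map of $f_\alpha\vert_{C_\alpha}$ to a carefully chosen clopen subset of $C_\alpha$ corresponding to one step of the Gauss map is conjugate to $f_{\{1/\alpha\}}\vert_{C_{\{1/\alpha\}}}$, and a Kakutani–Rokhlin tower surgery converts this equivalence of induced systems into a homeomorphism of the suspensions. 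The main obstacle I anticipate is the canonicity of the leaf decomposition in the first step: because the leaves are non-compact and each is dense in $\mathbb{D}_\alpha$, one cannot detect them by compactness or minimality alone and must produce a genuinely topological invariant — for instance the local product structure transverse to the Cantor direction, or a fine end-theoretic description of each leaf as a subset of $\mathbb{D}_\alpha$ — that forces any self-homeomorphism to respect the lamination.
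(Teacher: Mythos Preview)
The paper does not prove this theorem; it is quoted with references to Fokkink and to Barge--Williams and then used as a black box in the proof of Corollary~\ref{cor:conjugate equivalent}. So there is no ``paper's own proof'' to compare against here.

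That said, your outline is close in spirit to the cited references, with one structural difference worth noting. Your Step~1 --- recovering the underlying Cantor system $(C_\alpha,f_\alpha)$ up to flip conjugacy from the bare topology of $\mathbb{D}_\alpha$ --- is exactly the step you flag as problematic, and rightly so: the leaves are dense non-compact subsets, and it is not immediate that an arbitrary homeomorphism must respect them. The approach in Barge--Williams sidesteps this entirely. They compute the \v{C}ech cohomology $\check H^1(\mathbb{D}_\alpha;\Z)$ directly from an inverse-limit presentation of the continuum and identify it, as an ordered group (equivalently, with its positive cone), with $\Z+\alpha\Z\subset\R$. Since \v{C}ech cohomology is a topological invariant of the space itself, no preliminary reduction to a dynamical conjugacy is needed; the ``only if'' direction then follows from the elementary fact that $\Z+\alpha\Z$ and $\Z+\alpha'\Z$ are order-isomorphic (up to sign) precisely when $\alpha$ and $\alpha'$ are $\mathrm{GL}(2,\Z)$-equivalent. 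For the converse, equivalence of continued-fraction tails gives cofinal inverse systems and hence homeomorphic limits, which is cleaner than realizing each generator of $\mathrm{GL}(2,\Z)$ by hand. In short: your Step~2 is the heart of the argument and matches the literature; your Step~1 is an unnecessary detour that creates the very obstacle you anticipate.
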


As a consequence, we have the following.
\begin{corollary} \label{cor:conjugate equivalent}
The set $\{Cl([\phi_\alpha])\}_{\alpha \in (0,1)\setminus \mathbb Q} \subset \Map(\Sigma_0)$ consists of uncountably many conjugacy classes.
\end{corollary}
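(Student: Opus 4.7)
The plan is to contrapose: if the conjugacy classes $Cl([\phi_\alpha])$ and $Cl([\phi_{\alpha'}])$ agree for some $\alpha, \alpha' \in (0,1)\setminus\mathbb Q$, I will show that $\alpha$ and $\alpha'$ must be equivalent under the $\mathrm{GL}(2,\Z)$ action, and then conclude by a cardinality argument.

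First I would pass from the conjugacy in $\Map(\Sigma_0)$ back across the chosen markings $\Sigma_\alpha \to \Sigma_0$ and $\Sigma_{\alpha'} \to \Sigma_0$ to produce a homeomorphism $h \colon \Sigma_\alpha \to \Sigma_{\alpha'}$ with $h \circ F_\alpha \circ h^{-1}$ isotopic to $F_{\alpha'}$. The key step is then to restrict attention to the induced action on ends: since $\Ends(\Sigma_\alpha) = C_\alpha$ and $\Ends(\Sigma_{\alpha'}) = C_{\alpha'}$, and since isotopies of a surface act trivially on its end space, $h$ induces an honest topological conjugacy $h_* \colon C_\alpha \to C_{\alpha'}$ between $F_\alpha|_{C_\alpha} = f_\alpha|_{C_\alpha}$ and $F_{\alpha'}|_{C_{\alpha'}} = f_{\alpha'}|_{C_{\alpha'}}$.

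Next I would promote this topological conjugacy of Cantor-set dynamical systems to a homeomorphism of Denjoy continua. Explicitly, the map $(t,s) \mapsto (h_*(t), s)$ on $C_\alpha \times [0,1]$ intertwines the gluings $(t,1)\sim(f_\alpha(t),0)$ and $(t',1)\sim(f_{\alpha'}(t'),0)$, so it descends to a homeomorphism $\mathbb D_\alpha \cong \mathbb D_{\alpha'}$. Theorem~\ref{thm:fokkink} then forces $\alpha$ and $\alpha'$ to be $\mathrm{GL}(2,\Z)$-equivalent.

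Finally, since $\mathrm{GL}(2,\Z)$ is countable, each of its orbits in $(0,1)\setminus\Q$ is countable, while $(0,1)\setminus\Q$ itself is uncountable; consequently there are uncountably many equivalence classes, and hence, by the contrapositive established above, uncountably many distinct conjugacy classes $Cl([\phi_\alpha])$. The one place requiring care is the passage from conjugacy of mapping classes to conjugacy of the induced end maps; this reduces to the standard fact that a homeomorphism of a surface isotopic to the identity induces the identity on the end space, which I would invoke rather than reprove.
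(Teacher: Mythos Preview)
Your proposal is correct and follows essentially the same route as the paper: reduce conjugacy of $[\phi_\alpha]$ and $[\phi_{\alpha'}]$ to a topological conjugacy on the end spaces $C_\alpha$ and $C_{\alpha'}$ (using that isotopic homeomorphisms induce identical maps on ends), suspend to obtain $\mathbb D_\alpha \cong \mathbb D_{\alpha'}$, apply Theorem~\ref{thm:fokkink}, and finish with the countability of $\mathrm{GL}(2,\Z)$-orbits. The only cosmetic difference is that you pull the conjugating homeomorphism back to a map $\Sigma_\alpha \to \Sigma_{\alpha'}$, whereas the paper phrases everything via the homomorphism $\Map(\Sigma_0) \to \Homeo(\Ends(\Sigma_0))$; the content is identical.
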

\begin{proof}
We claim that it is sufficient to show that if $[\phi_\alpha]$ and $[\phi_{\alpha'}]$ are conjugate then ${\mathbb D}_\alpha $ and ${\mathbb D}_{\alpha'}$ are homeomorphic. Indeed, using Theorem~\ref{thm:fokkink} we get that $\alpha$ and $\alpha'$ are equivalent, hence they belong to the same (at most countable) ${\rm GL}(2,\Z)$-orbit. Hence the set of parameters $\alpha'\in (0,1) \setminus \mathbb Q$ such that $[\phi_{\alpha'}]$ belongs to the conjugacy class of $[\phi_\alpha]$ is at most countable. Since $(0,1) \setminus \mathbb Q$ is uncountable, the set $\{Cl([\phi_\alpha])\}_{\alpha \in (0,1)\setminus \mathbb Q}$ consists of uncountably many conjugacy classes.

Now suppose that $[\phi_\alpha] = [f][\phi_{\alpha'}][f]^{-1} = [f\phi_{\alpha'}f^{-1}]$ for some $[f]\in\Map(\Sigma_0)$. Any homeomorphism $h \colon \Sigma \to \Sigma'$ induces a homeomorphism $h_* \colon \Ends(\Sigma) \to \Ends(\Sigma')$.  Moreover, this induced homeomorphism depends only on the isotopy class of $h$.  In particular, $[h] \mapsto h_*$ defines a homomorphism $\Map(\Sigma_0) \to \Homeo(\Ends(\Sigma_0))$.  It follows that ${\phi_\alpha}_* = (f\phi_{\alpha'}f^{-1})_* = f_* {\phi_{\alpha'}}_* f_*^{-1}$ in $\Homeo(\Ends(\Sigma_0))$


  The homeomorphism $f_* \times \mbox{id}_{[0,1]}$ on $\Ends(\Sigma_0) \times [0,1] \to \Ends(\Sigma_0) \times [0,1]$ descends to a homeomorphism from the mapping torus of ${\phi_\alpha}_*$ to the mapping torus of ${\phi_{\alpha'}}_*$.

  On the other hand, the homeomorphism $\Sigma_\alpha \to \Sigma_0$ conjugating $F_\alpha$ to $\phi_\alpha$ extends to a homeomorphism conjugating ${F_\alpha}_* = (f_\alpha)|_{C_\alpha}$ to ${\phi_\alpha}_*$.  As above, the mapping torus ${\mathbb D}_\alpha$ of $(f_\alpha)|_{C_\alpha}$ is therefore homeomorphic to that of ${\phi_\alpha}_*$.  Similarly, ${\mathbb D}_{\alpha'}$ is homeomorphic to the mapping torus of ${\phi_{\alpha'}}_*$.  Combining all these homeomorphisms, we see that ${\mathbb D}_\alpha $ is homeomorphic to ${\mathbb D}_{\alpha'}$.
\end{proof}

This application of Theorem~\ref{thm:fokkink} can be abstracted to other surfaces with no change in the proof.
\begin{corollary} \label{cor:general conjugate equivalent}
If $\Sigma$ is any surface such that $\Ends(\Sigma)$ contains a $\Map(\Sigma)$--invariant Cantor set $C \subset \Ends(\Sigma)$, and if $[\varphi_\alpha],[\varphi_{\alpha'}] \in \Map(\Sigma)$ are conjugate mapping classes such that $(\varphi_\alpha)_*|C$ is conjugate to $(f_\alpha)|_{C_\alpha}$ and $(\varphi_{\alpha'})_*|C$ is conjugate to $(f_{\alpha'})|_{C_\alpha}$, then $\alpha$ and $\alpha'$ are equivalent.
\end{corollary}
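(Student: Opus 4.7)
The plan is to mimic essentially verbatim the argument in the proof of Corollary~\ref{cor:conjugate equivalent}, with $\Sigma$ replacing $\Sigma_0$ and the invariant Cantor set $C$ replacing $\Ends(\Sigma_0)$.  The goal is to produce a homeomorphism $\mathbb{D}_\alpha \cong \mathbb{D}_{\alpha'}$ from the hypotheses, after which Theorem~\ref{thm:fokkink} yields the equivalence of $\alpha$ and $\alpha'$.

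First I would start by unpacking the conjugacy in $\Map(\Sigma)$.  Write $[\varphi_\alpha]=[f][\varphi_{\alpha'}][f]^{-1}$ for some $[f]\in\Map(\Sigma)$ and apply the homomorphism $\Map(\Sigma)\to\Homeo(\Ends(\Sigma))$ to obtain $(\varphi_\alpha)_*=f_*(\varphi_{\alpha'})_* f_*^{-1}$ in $\Homeo(\Ends(\Sigma))$.  The crucial use of the hypothesis that $C$ is $\Map(\Sigma)$--invariant is that $f_*$ restricts to a self-homeomorphism of $C$, so that this conjugacy relation descends to $C$:
\[
(\varphi_\alpha)_*|_C \;=\; (f_*|_C)\,(\varphi_{\alpha'})_*|_C\,(f_*|_C)^{-1}.
\]

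Next I would pass to mapping tori.  Conjugate homeomorphisms of a space have homeomorphic mapping tori (via the map induced by the conjugating homeomorphism cross the identity on $[0,1]$), so the mapping tori of $(\varphi_\alpha)_*|_C$ and $(\varphi_{\alpha'})_*|_C$ are homeomorphic.  By the other hypothesis, $(\varphi_\alpha)_*|_C$ is conjugate to $(f_\alpha)|_{C_\alpha}$ (as homeomorphisms of Cantor sets), so its mapping torus is homeomorphic to $\mathbb{D}_\alpha$; the same reasoning shows the mapping torus of $(\varphi_{\alpha'})_*|_C$ is homeomorphic to $\mathbb{D}_{\alpha'}$.  Chaining these homeomorphisms gives $\mathbb{D}_\alpha\cong\mathbb{D}_{\alpha'}$, and Theorem~\ref{thm:fokkink} delivers the conclusion that $\alpha$ and $\alpha'$ are equivalent.

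There is really no hard part here: the argument is formal and uses only (i) functoriality of the induced action on ends, (ii) invariance of $C$ so that $f_*$ restricts to $C$, and (iii) the fact that conjugate self-homeomorphisms have homeomorphic mapping tori.  The only place that could require a sentence of care is the remark that the relevant mapping tori on the end-space side really are the Denjoy continua $\mathbb{D}_\alpha$ and $\mathbb{D}_{\alpha'}$; this is immediate from the definition of $\mathbb{D}_\alpha$ as the suspension of $(f_\alpha)|_{C_\alpha}$ together with invariance of mapping tori under conjugation.
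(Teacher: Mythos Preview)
Your proposal is correct and follows exactly the approach the paper intends: the paper itself simply states that this corollary is an abstraction of Corollary~\ref{cor:conjugate equivalent} ``with no change in the proof,'' and what you have written is precisely that abstraction, with the key observation made explicit that $\Map(\Sigma)$--invariance of $C$ is what allows $f_*$ (and $(\varphi_\alpha)_*$, $(\varphi_{\alpha'})_*$) to restrict to self-homeomorphisms of $C$.
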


\section{Proof of the Main Theorem}
	\label{section:proofs}

In this section we prove our main theorem.

\medskip

\noindent
{\bf Theorem~\ref{thm:main1}}
{\em \MainTheorem
}

\medskip

Let $M_{F_{\alpha}}$ be the mapping torus of $F_{\alpha}$; that is, the quotient space $\Sigma_\alpha \times [0,1]/\sim$ where $(x,1) \sim (F_\alpha(x),0)$, for all $x \in \Sigma_\alpha$.  Theorem~\ref{thm:main1} will follow quickly from the next proposition.


\begin{proposition} \label{Prop:mapping torus handlebody}
For every $\alpha\in[0,1]\setminus\mathbb{Q}$ the mapping torus $M_{F_\alpha}$ is homeomorphic to the interior of the handlebody $V_2$.
\end{proposition}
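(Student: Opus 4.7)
The plan is to exhibit $M_{F_\alpha}$ as the complement of a regular neighborhood of the Denjoy continuum inside $\mathbb{S}^2 \times \mathbb{S}^1$, and then identify the resulting decomposition with the interior of a genus 2 handlebody. First, extend $F_\alpha$ to the orientation-preserving sphere homeomorphism $\widetilde{F}_\alpha$ from Section~\ref{section:Denjoy}. Since $\operatorname{Homeo}^+(\mathbb{S}^2)$ is path-connected, $\widetilde{F}_\alpha$ is isotopic to the identity, so $M_{\widetilde{F}_\alpha} \cong \mathbb{S}^2 \times \mathbb{S}^1$; under this identification, $M_{F_\alpha} = M_{\widetilde{F}_\alpha} \setminus \mathbb{D}_\alpha$, where the Denjoy continuum $\mathbb{D}_\alpha$ sits inside $\mathbb{S}^2 \times \mathbb{S}^1$ as the suspension of $f_\alpha|_{C_\alpha}$.

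The hemisphere decomposition $\mathbb{S}^2_\alpha = D_+ \cup_{\mathbb{S}^1_\alpha} D_-$ is preserved by $\widetilde{F}_\alpha$, which is isotopic to the identity on each closed hemisphere fixing the pole. The mapping torus of each restriction is therefore a solid torus $V_1^\pm \cong D^2 \times \mathbb{S}^1$, and gluing along the Heegaard torus $T^2$ (the mapping torus of $f_\alpha$) yields the standard genus 1 Heegaard splitting $\mathbb{S}^2 \times \mathbb{S}^1 = V_1^+ \cup_{T^2} V_1^-$, with $\mathbb{D}_\alpha \subset T^2$. The key observation is that $T^2 \setminus \mathbb{D}_\alpha \cong \mathbb{R}^2$: this mapping torus of $f_\alpha|_{\bigsqcup_n I_n^o}$ unwinds via $(x_0, n+s) \mapsto [f_\alpha^n(x_0), s]$ into $I_0^o \times \mathbb{R}$, since $\langle f_\alpha \rangle$ acts freely and transitively on $\{I_n^o\}_{n \in \mathbb{Z}}$. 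Choose a small closed disk $D \subset T^2 \setminus \mathbb{D}_\alpha$, let $P = T^2 \setminus D^o$ (a once-punctured torus neighborhood of $\mathbb{D}_\alpha$), and thicken normally to obtain a closed regular neighborhood $N = P \times [-\epsilon, \epsilon]$ of $\mathbb{D}_\alpha$ in $\mathbb{S}^2 \times \mathbb{S}^1$. An Euler characteristic count shows $\partial N$ is a closed genus 2 surface (two once-punctured tori joined by an annulus).

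The complement $H := (\mathbb{S}^2 \times \mathbb{S}^1) \setminus \operatorname{int}(N)$ splits in the Heegaard picture as $H^+ \cup_D H^-$. Each $H^\pm = V_1^\pm \setminus \operatorname{int}(N \cap V_1^\pm)$ is a solid torus, obtained from $V_1^\pm$ by pushing in a collar of the subsurface $P$ of its boundary torus (which is a homeomorphism), and gluing two solid tori along a closed disk in each boundary is the boundary connect sum $V_1 \natural V_1 \cong V_2$. To finish, one needs $N \setminus \mathbb{D}_\alpha \cong \partial N \times [0, 1)$. The plan is to take a shrinking family of product neighborhoods $N_\tau = P_\tau \times [-\epsilon_\tau, \epsilon_\tau]$, $\tau \in [0, 1)$, with $N_0 = N$ and $\bigcap_\tau N_\tau = \mathbb{D}_\alpha$, obtained by continuously enlarging $D$ to exhaust the open disk $T^2 \setminus \mathbb{D}_\alpha$ and letting $\epsilon_\tau \to 0$; each gap $N_\tau \setminus \operatorname{int}(N_{\tau'})$ for $\tau < \tau'$ is then a product cobordism $\partial N_\tau \times [0, 1]$ between parallel genus 2 surfaces, and telescoping identifies $N \setminus \mathbb{D}_\alpha$ with an open collar of $\partial N$. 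Combining gives $M_{F_\alpha} = H \cup (N \setminus \mathbb{D}_\alpha) \cong V_2 \cup (\partial V_2 \times [0, 1)) \cong \operatorname{int}(V_2)$, as desired. The main obstacle is verifying the product-cobordism claim in this telescoping, since $\mathbb{D}_\alpha$ is wildly embedded in $T^2$; the crucial fact making it work is the explicit product structure of each $N_\tau$ in both the $T^2$-direction (nested punctured tori) and the normal direction.
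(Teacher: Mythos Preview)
Your proof is correct and takes a genuinely different route from the paper. The paper works entirely inside $M_{F_\alpha}$: it builds an explicit exhaustion $(W_t)_{t\in(0,1)}$ by closed genus~2 handlebodies, each assembled from two solid tori (the mapping tori of small polar caps around $N$ and $S$) joined by a single 1-handle coming from the suspension of the wandering interval $I_0$, and then argues that each gap $\overline{W_{t'}\setminus W_t}$ is a product $S_2\times[t,t']$. You instead pass to the ambient $\mathbb{S}^2\times\mathbb{S}^1$ and use its genus~1 Heegaard splitting: the complement of a thickened once-punctured-torus neighborhood $N$ of $\mathbb{D}_\alpha$ is $V_1\natural V_1\cong V_2$, and $N\setminus\mathbb{D}_\alpha$ is an open collar via the shrinking family $N_\tau$. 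Your key observation $T^2\setminus\mathbb{D}_\alpha\cong\mathbb{R}^2$ (a clean consequence of the single $f_\alpha$-orbit of wandering intervals) is exactly what makes the nested once-punctured tori $P_\tau$ exist, and the product-cobordism step you flag as the main obstacle is at the same level of rigor as the paper's Claim~2---both decompose the gap into a collar of two one-holed tori plus a collar of an annulus. The paper's inside-out exhaustion pays dividends later, since the explicit solid tori and 1-handle are reused as building blocks in \S\ref{section:examples}; your outside-in picture instead promotes the embedding $V_2\hookrightarrow\mathbb{S}^2\times\mathbb{S}^1$ with complement $\mathbb{D}_\alpha$ (which the paper only records as a remark after Theorem~\ref{thm:main1}) to the organizing principle of the argument.
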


We postpone the proof of this proposition to the next subsection, but use it to prove the main theorem.

\begin{proof}[Proof of Theorem~\ref{thm:main1} assuming Proposition~\ref{Prop:mapping torus handlebody}] The mapping torus of $F_\alpha$ admits a fibration over the circle obtained from the projection onto the second factor,
\[ M_{F_\alpha} = \Sigma_\alpha \times [0,1]/\sim \, \longrightarrow  \, [0,1]/(0\sim 1) \, \cong  \, \mathbb{S}^1.\]
The dual cohomology class is primitive since the fiber is connected (see e.g.~\cite{ThurstonNorm}).
For any $\alpha$, Proposition~\ref{Prop:mapping torus handlebody} gives us a homeomorphism $V_2 \to M_{F_\alpha}$, and thus $V_2$ admits a fibration $p_\alpha \colon V_2 \to \mathbb{S}^1$.  The natural homomorphism $\Homeo(V_2) \to Out(\pi_1(V_2))$ is surjective (see~\cite{Zieschang61,McMillan63,Griffiths63}), from which it follows that the action of $\Homeo(V_2)$ on primitive integral cohomology classes in $H^1(V_2,\Z) \cong Hom(\pi_1(V_2),\Z)$ is transitive.  In particular, the dual cohomology class of the fibration $p_\alpha: V_2 \to S^1$ can be taken to be any primitive integral cohomology class, proving (1).
A homeomorphism $\Sigma_\alpha \cong \Sigma_0$ to the Cantor tree surface allows us to identify the fiber of $p_\alpha \colon V_2 \to S^1$ with $\Sigma_0$, proving (2).  The conjugating homeomorphism can be chosen to conjugate the monodromy $F_\alpha$ to $\phi_\alpha$.  The collection of these fibrations $\{p_\alpha\}_{\alpha \in (0,1) \setminus \mathbb Q}$ gives uncountably many fibrations.
Corollary~\ref{cor:conjugate equivalent} implies $\{Cl([\phi_\alpha])\}_{\alpha \in (0,1) \setminus \mathbb Q}$ is uncountable, proving (3).
\end{proof}

\begin{remark}
By construction, for every $\alpha\in[0,1]\setminus\mathbb{Q}$ we have an embedding $V_2\hookrightarrow\mathbb{S}^2\times\mathbb{S}^1$. The complement of the image of this embedding is homeomorphic to the Denjoy continum $\mathbb{D}_\alpha$. Moreover, from Theorem~\ref{thm:main1} we obtain \emph{a posteriori} that there exist uncountably many non-homeomorphic Denjoy continua in $\mathbb{S}^2\times\mathbb{S}^1$ whose complement is $V_2$.
\end{remark}

The remainder of the section is devoted to proving Proposition~\ref{Prop:mapping torus handlebody}.

\subsection{Mapping torus of a Denjoy sphere homeomorphism}

Before we prove Proposition~\ref{Prop:mapping torus handlebody}, it is instructive to compute $\pi_1(M_{F_{\alpha}}) \cong F_2$.  Although this fact is a consequence of Proposition~\ref{Prop:mapping torus handlebody}, and in fact the proof of the proposition is logically independent, the calculation guides the ideas in the proof of the proposition.

Recall that $\pi_\alpha \colon \mathbb S^1_\alpha \times [-1,1] \to \mathbb S^2_\alpha$ is the quotient map to the suspension of $\mathbb S^1_\alpha$, and $N,S \in \mathbb S^2_\alpha$ are the north and south poles (i.e.~the suspension points).

Let $\Gamma$ be a connected (geometric) graph with two vertices, $\nu$ and $\sigma$ and countably-many edges, each connecting $\nu$ and $\sigma$, which we index by the integers, $\{e_j\}_{j \in \mathbb Z}$.  See Figure~\ref{Fig:Spine}.  For all $j \in \mathbb Z$, let $y_j$ be the midpoint of the interval $I_j$, and we construct a continuous injection
\[ J_\alpha \colon \Gamma \to \mathbb S_{\alpha}^2,\]
by setting $J_\alpha(\nu) = N$, $J_\alpha(\sigma)=S$ and mapping $e_j$ homeomorphically to $\pi_\alpha(\{y_j\} \times (-1,1))$ for all $j \in \mathbb Z$.

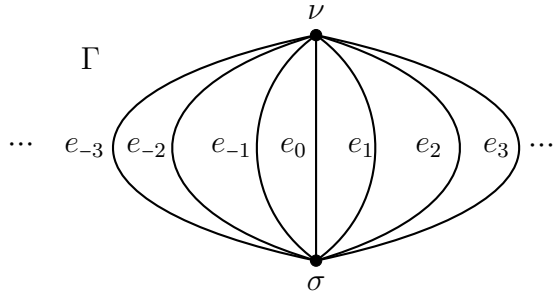
\begin{figure}[h]
\begin{tikzpicture}[scale = 1.5]
\draw[fill=black] (0,1) circle (.05cm);
\draw[fill=black] (0,-1) circle (.05cm);
\draw[thick] (0,1) -- (0,-1);
\draw[thick] (0,1) .. controls (.7,.5) and (.7,-.5) .. (0,-1);
\draw[thick] (0,1) .. controls (-.7,.5) and (-.7,-.5) .. (0,-1);
\draw[thick] (0,1) .. controls (1.7,.5) and (1.7,-.5) .. (0,-1);
\draw[thick] (0,1) .. controls (-1.7,.5) and (-1.7,-.5) .. (0,-1);
\draw[thick] (0,1) .. controls (2.4,.5) and (2.4,-.5) .. (0,-1);
\draw[thick] (0,1) .. controls (-2.4,.5) and (-2.4,-.5) .. (0,-1);
\node at (2,0) {$\cdots$};
\node at (-2.3,0) {$\cdots \quad e_{-3}$};
\node at (-.2,0) {$e_0$};
\node at (.4,0) {$e_1$};
\node at (1,0) {$e_2$};
\node at (1.6,0) {$e_3$};
\node at (-.75,0) {$e_{-1}$};
\node at (-1.5,0) {$e_{-2}$};
\node at (0,1.2) {$\nu$};
\node at (0,-1.2) {$\sigma$};
\node at (-2,.8) {$\Gamma$};
\end{tikzpicture}
\caption{A ``spine" of $\Sigma_\alpha$.}
\label{Fig:Spine}
\end{figure}

\begin{lemma} \label{Lemma:spine} For any $\alpha \in [0,1]\setminus \mathbb Q$, $J_{\alpha *} \colon \pi_1(\Gamma,\nu) \to \pi_1(\Sigma_\alpha,N)$ is an isomorphism.
\end{lemma}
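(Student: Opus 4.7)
The plan is to exhaust $\Sigma_\alpha$ by a nested sequence of compact subsurfaces $\{K_n\}_{n\geq 0}$ each of which deformation retracts onto the image $J_\alpha(\Gamma_n)$ of the finite subgraph $\Gamma_n \subset \Gamma$ spanned by the two vertices $\nu,\sigma$ and the edges $\{e_j : |j| \leq n\}$. Since $\pi_1$ commutes with the direct limit over an increasing compact exhaustion, and since $\Gamma = \bigcup_n \Gamma_n$, this would give
\[
\pi_1(\Sigma_\alpha, N) \;=\; \varinjlim_n \pi_1(K_n, N) \;\cong\; \varinjlim_n \pi_1(\Gamma_n, \nu) \;=\; \pi_1(\Gamma, \nu),
\]
with the colimit isomorphism induced by $J_\alpha$.

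To build $K_n$, I would first use that the $2n+1$ meridional arcs $J_\alpha(e_j)$ with $|j|\leq n$ share the endpoints $N$ and $S$ and are otherwise disjoint, so together they cut $\mathbb{S}_\alpha^2$ into $2n+1$ closed bigons, each bounded by two consecutive meridians in the cyclic order of the equatorial points $\{y_j : |j|\leq n\}$. Correspondingly $C_\alpha$ is partitioned into $2n+1$ disjoint compact subsets $C^{(1)},\dots,C^{(2n+1)}$, with $C^{(k)}$ in the interior of the $k$-th bigon. For each $k$, I would choose an open disk $U_n^{(k)}\subset \mathbb{S}_\alpha^2$ containing $C^{(k)}$, contained in the interior of its bigon and disjoint from $J_\alpha(\Gamma_n)$, and set
\[
K_n \;:=\; \mathbb{S}_\alpha^2 \setminus \bigsqcup_{k=1}^{2n+1} U_n^{(k)}.
\]
Using the suspension coordinates, the disks can be taken in product form $U_n^{(k)} = \pi_\alpha(V_n^{(k)}\times(-\delta_n,\delta_n))$, with the equatorial arcs $V_n^{(k)}$ and the transverse width $\delta_n$ shrinking as $n$ grows, so that $K_n\subset K_{n+1}$ and $\bigcup_n K_n = \Sigma_\alpha$.

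For the deformation retraction $K_n \to J_\alpha(\Gamma_n)$, I would observe that $K_n$ is a genus-zero compact surface with $2n+1$ boundary circles containing $J_\alpha(\Gamma_n)$ in its interior. Cutting $K_n$ along the meridians of $J_\alpha(\Gamma_n)$ decomposes it into $2n+1$ annular sectors: the $k$-th sector is the closed $k$-th bigon with the open disk $U_n^{(k)}$ removed, whose outer boundary is the pair of bounding meridians (together with $N$ and $S$) and whose inner boundary is $\partial U_n^{(k)}$. Each such annulus admits a standard radial deformation retraction onto its outer boundary that fixes the meridians pointwise; these retractions agree on the shared meridians and glue to a deformation retraction of $K_n$ onto $J_\alpha(\Gamma_n)$.

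The main technical obstacle will be the coherence of the shrinking sequence $\{U_n^{(k)}\}_{k,n}$ under the refinement that occurs when passing from $n$ to $n+1$: two new meridians (through $y_{n+1}$ and $y_{-(n+1)}$) enter and each subdivides one of the existing bigons, so the corresponding $U_n^{(k)}$ has to be split or shrunk to accommodate the new meridians while preserving nesting and exhaustion. Arranging all of this in the product coordinates near the equator accounts for essentially all of the work; once it is in place, the annular retractions and the direct-limit computation are routine.
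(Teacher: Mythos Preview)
Your approach is correct and the outline can be completed; the exhaustion by planar surfaces $K_n$ with $2n+1$ boundary circles, each deformation retracting onto $J_\alpha(\Gamma_n)$, together with the direct-limit identification of $\pi_1$, gives the isomorphism. The technical point you flag---arranging $K_n\subset \operatorname{int}(K_{n+1})$ and $\bigcup_n K_n=\Sigma_\alpha$ while keeping the disks $U_n^{(k)}$ inside the refined bigons---is genuine but routine once one works in the product coordinates near the equator and lets $\delta_n\to 0$ and shrinks the equatorial arcs $V_n^{(k)}$ toward $C_\alpha$ appropriately.

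The paper takes a closely related but more economical route. Rather than constructing a nested compact exhaustion of $\Sigma_\alpha$, it handles injectivity and surjectivity separately. For injectivity it picks $2k+1$ \emph{points} $X_k\subset C_\alpha$ (one in each bigon) and uses the factorization $\Gamma_k\to\Sigma_\alpha\to\mathbb S^2_\alpha\setminus X_k$; since $J_\alpha(\Gamma_k)$ is a spine of the finitely punctured sphere $\mathbb S^2_\alpha\setminus X_k$, the composite is a homotopy equivalence, so $(J_\alpha|_{\Gamma_k})_*$ is injective, and the direct limit on the graph side alone finishes injectivity. For surjectivity it uses a short transversality argument with the equator to homotope any loop into $J_\alpha(\Gamma)$. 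The gain is that no nesting or exhaustion conditions on $\Sigma_\alpha$ are needed, which eliminates exactly the bookkeeping you identify as the main obstacle. Your approach, by contrast, is more uniform---a single construction yields both directions at once---at the cost of that extra bookkeeping.
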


\begin{proof} For any $k > 0$, consider the subgraph $\Gamma_k \subset \Gamma$ spanned by the edges $\{e_j \mid -k \leq j \leq k\}$.  We claim that that the restriction of $J_\alpha$ to $\Gamma_k$ induces an injective homomorphism. Indeed, setting $X_k \subset C_\alpha \subset S^2_\alpha$ to be a set of $2k+1$ points separating the intervals $\{I_j \mid -k \leq j \leq k\}$, there is an inclusion $\Sigma_\alpha \to S^2_\alpha - X_k$ and the induced map $\pi_1(\Sigma_\alpha) \to \pi_1(S^2_\alpha - X_k)$ is surjective.
  The composition of $J_\alpha|_{\Gamma_k}$ with this inclusion is easily seen to be a homotopy equivalence: the image is a spine for $\mathbb S_{\alpha}^2 \setminus X_k$.  It follows that $(J_\alpha|_{\Gamma_k})_* \colon \pi_1(\Gamma_k) \to \pi_1(\Sigma_\alpha)$ is injective.  Since $\pi_1(\Gamma)$ is the direct limit of the subgroups $\pi_1(\Gamma_k)$ as $k \to \infty$, it follows that $J_{\alpha*}$ is injective.

Given any element of $\pi_1(\Sigma_\alpha,N)$, we first represent it by a loop $\gamma \colon [0,1] \to \Sigma_\alpha$ that is smooth.  By a small homotopy, we may assume it intersects $\mathbb S_{\alpha}^1$ transversely, and hence meets only finitely many complementary intervals, and meets each one in finitely many points.  The preimage of these points partitions $[0,1]$ so that the restriction of $\gamma$ to each subinterval is a path connecting points in $(\mathbb S_{\alpha}^1 \setminus C_\alpha)\cup\{N\}$ whose interior is disjoint from the equator $\mathbb S^1_\alpha$.  By further homotopy, we can assume that the endpoints of these paths are in the set $\{y_j\}_{j \in \mathbb Z} \cup \{N,S\}$, and then after another homotopy, that they lie entirely in the image $J(\Gamma)$.  Thus, the element of $\pi_1(\Sigma_\alpha,N)$ is in $J_{\alpha*}(\pi_1(\Gamma,\nu))$, proving that $J_{\alpha*}$ is surjective, and hence an isomorphism.
\end{proof}

\begin{proposition} For any $\alpha \in (0,1) \setminus \mathbb Q$, $\pi_1(M_{F_\alpha}) \cong F_2$.
\end{proposition}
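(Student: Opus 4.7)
The plan is to invoke the standard presentation of the fundamental group of a mapping torus as a semidirect product, and then change basis in $\pi_1(\Sigma_\alpha) \cong F_\infty$ so that the induced automorphism $F_{\alpha*}$ becomes a shift; the resulting shifted semidirect product will be visibly $F_2$.

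First I would fix a free basis of $\pi_1(\Sigma_\alpha, N)$ compatible with the spine $\Gamma$ from Lemma~\ref{Lemma:spine}: take $a_j := e_j \cdot \bar{e}_0$ for $j \in \Z \setminus \{0\}$, based at $\nu = N$, with the formal convention $a_0 = 1$. Since $f_\alpha$ carries the midpoint $y_j$ to $y_{j+1}$ and $F_\alpha$ is the descent of $f_\alpha \times \mathrm{id}_{[-1,1]}$ fixing $N$ and $S$, the homeomorphism $F_\alpha$ is homotopic rel $N$ to a map that carries $J_\alpha(e_j)$ onto $J_\alpha(e_{j+1})$ preserving orientation. A direct edge-path calculation in $\Gamma$ then yields $F_{\alpha*}(a_j) = a_{j+1} a_1^{-1}$ for every $j \neq 0$. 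The standard mapping-torus presentation gives
\[ \pi_1(M_{F_\alpha}) \cong \pi_1(\Sigma_\alpha) \rtimes_{F_{\alpha*}} \Z = \langle \{a_j\}_{j \neq 0},\, t \mid t a_j t^{-1} = a_{j+1} a_1^{-1} \rangle. \]

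Next I would perform a change of basis that turns $F_{\alpha*}$ into a shift. Set $b_j := F_{\alpha*}^j(a_1)$ for $j \in \Z$, so tautologically $F_{\alpha*}(b_j) = b_{j+1}$. A short induction from the formula for $F_{\alpha*}$ gives the closed form $b_j = a_{j+1} a_j^{-1}$ (using $a_0 = 1$), and the telescoping identity $a_n = b_{n-1} b_{n-2} \cdots b_0$ for $n \geq 1$, together with the obvious analog for $n \leq -1$, exhibits the inverse substitution. These two substitutions define mutually inverse endomorphisms of the countably-generated free group on either basis, so $\{b_j\}_{j \in \Z}$ is also a free basis for $\pi_1(\Sigma_\alpha) \cong F_\infty$. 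In this new basis the presentation reads $\langle \{b_j\}_{j \in \Z},\, t \mid t b_j t^{-1} = b_{j+1}\rangle$, and Tietze transformations eliminate every $b_j$ with $j \neq 0$ via $b_j = t^j b_0 t^{-j}$, leaving the free presentation $\langle b_0, t \rangle \cong F_2$.

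The step I expect to be the main obstacle is verifying that $\{b_j\}_{j \in \Z}$ is in fact a free basis of $F_\infty$: generation falls out of the telescoping identity, but freeness requires checking that the two substitutions between the $a$-basis and the $b$-basis are honestly mutually inverse, which is where the $j = 0$ and $j = -1$ bookkeeping (stemming from the convention $a_0 = 1$ and the slightly different shape of $b_{-1}$) must be handled carefully. Once that is in place, the remainder is a purely formal presentation chase and delivers $\pi_1(M_{F_\alpha}) \cong F_2$, in harmony with Proposition~\ref{Prop:mapping torus handlebody}.
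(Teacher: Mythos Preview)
Your proof is correct and follows essentially the same route as the paper: both identify $\pi_1(M_{F_\alpha})$ with a semidirect product $F_\infty \rtimes \Z$ and then recognize this as $F_2$ by exhibiting a free basis on which the $\Z$-action is a pure shift. The only difference is that you carry out the change of basis explicitly, whereas the paper simply asserts the existence of generators $\{x_k\}_{k\in\Z}$ with $x_k \mapsto x_{k+1}$; in fact the paper's implicit basis is $x_k = e_k\bar e_{k+1}$, which is exactly your $b_k^{-1}$, so the two arguments match up on the nose.

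One minor remark: your phrase ``$F_\alpha$ is homotopic rel $N$ to a map that carries $J_\alpha(e_j)$ onto $J_\alpha(e_{j+1})$'' can be strengthened---since $f_\alpha$ is affine on each $I_j$ and sends midpoints to midpoints, $F_\alpha$ already does this on the nose, so no homotopy is needed.
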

\begin{proof}
Let $F \colon \Gamma \to \Gamma$ be a homeomorphism fixing $\nu$ and $\sigma$ so that $F(e_j) = e_{j+1}$ for all $j \in \mathbb Z$.  Recall from \S\ref{Sec:Denjoy circle} that $f_\alpha$ is an affine map from $I_j$ to $I_{j+1}$, and hence we have $f_\alpha(y_j) = y_{j+1}$, and therefore we may choose $F$ so that $J_\alpha \circ F = F_\alpha \circ J_\alpha$.

Now consider the mapping torus $M_F =  \Gamma \times [0,1] /\sim$, with $(x,1) \sim (F(x),0)$.
The injection $J_\alpha$ extends to an injection of mapping tori, $\bar J_\alpha \colon M_F \to M_{F_\alpha}$.  It follows from Lemma~\ref{Lemma:spine} that ${\bar J}_{\alpha*} \colon \pi_1(M_F) \to \pi_1(M_{F_\alpha})$ is an isomorphism.  Now observe that
\[ \pi_1(M_{F_\alpha}) \cong \pi_1(M_F) \cong \pi_1(\Gamma) \rtimes \mathbb Z \cong F_2,\]
where the final isomorphism comes from the fact that semi-direct product structure is defined by the action of $\mathbb Z$ on the infinitely generated free group $\pi_1(\Gamma) \cong \langle \{x_k\}_{k \in \mathbb Z} \} \mid - \rangle$ so that the generator $1$ of $\mathbb Z$ acts by $x_k \mapsto x_{k+1}$.
\end{proof}

We exploit the ideas in the proof of the previous proposition, and the structure of $M_F$, to better understand $M_{F_\alpha}$.
To this end, we describe a natural cell structure for $M_F$.  Since $\nu$ and $\sigma$ are fixed, $(\nu,0) \sim (\nu,1)$ and $(\sigma,0) \sim (\sigma,1)$ project to two points in $M_F$ which will serve as vertices for a cell structure, and  we denote these points by $\bar \nu$ and $\bar \sigma$, respectively.  There are $1$--cells with characteristic maps obtained by composing $[0,1] \to \nu \times [0,1]$ and $[0,1] \to \sigma \times [0,1]$ with the projection to the quotient space, $\Gamma \times [0,1] \to M_F$, and we denote these $1$--cells by $e_\nu$ and $e_\sigma$.  Each $1$--cell $e_j$ defines a $1$--cell we denote $\bar e_j$ which is the image of $e_j \times \{0\}$.  Finally, $\bar e_,\bar e_{j+1}, \bar e_\nu, \bar e_\sigma$ span a $2$--cube.  See Figure~\ref{Fig:cubulation}.

\begin{figure}[h]
\begin{tikzpicture}
\draw[thick] (-1,.9) circle (.5);
\draw[thick] (-1,-.9) circle (.5);
\draw[fill=black] (-1,.4) circle (.05cm);
\draw[fill=black] (-1,-.4) circle (.05cm);
\node at (-1,.18) {$\bar \nu$};
\node at (-1,-.18) {$\bar \sigma$};
\node at (-1.8,.9) {$\bar e_\nu$};
\node at (-1.8,-.9) {$\bar e_\sigma$};
\draw[thick] (1.8,.5) -- (10.2,.5);
\draw[thick] (1.8,-.5) -- (10.2,-.5);
\draw[thick] (2,.5) -- (2,-.5);
\draw[thick] (3,.5) -- (3,-.5);
\draw[thick] (4,.5) -- (4,-.5);
\draw[thick] (5,.5) -- (5,-.5);
\draw[thick] (6,.5) -- (6,-.5);
\draw[thick] (7,.5) -- (7,-.5);
\draw[thick] (8,.5) -- (8,-.5);
\draw[thick] (9,.5) -- (9,-.5);
\draw[thick] (10,.5) -- (10,-.5);
\filldraw[opacity=.2] (1.85,.5) -- (10.15,.5) -- (10.15,-.5) -- (1.85,-.5) -- (1.85,.-.5);
\node at (10.5,0) {$\cdots$};
\node at (1,0) {$\cdots$};
\node at (2.5,.7) {$\bar e_\nu$};
\node at (2.5,-.7) {$\bar e_\sigma$};
\node at (3.5,.7) {$\bar e_\nu$};
\node at (3.5,-.7) {$\bar e_\sigma$};
\node at (4.5,.7) {$\bar e_\nu$};
\node at (4.5,-.7) {$\bar e_\sigma$};
\node at (5.5,.7) {$\bar e_\nu$};
\node at (5.5,-.7) {$\bar e_\sigma$};
\node at (6.5,.7) {$\bar e_\nu$};
\node at (6.5,-.7) {$\bar e_\sigma$};
\node at (7.5,.7) {$\bar e_\nu$};
\node at (7.5,-.7) {$\bar e_\sigma$};
\node at (8.5,.7) {$\bar e_\nu$};
\node at (8.5,-.7) {$\bar e_\sigma$};
\node at (9.5,.7) {$\bar e_\nu$};
\node at (9.5,-.7) {$\bar e_\sigma$};
\node at (1.65,0) {$\bar e_{-2}$};
\node at (2.72,0) {$\bar e_{-1}$};
\node at (3.8,0) {$\bar e_0$};
\node at (4.8,0) {$\bar e_1$};
\node at (5.8,0) {$\bar e_2$};
\node at (6.8,0) {$\bar e_3$};
\node at (7.8,0) {$\bar e_4$};
\node at (8.8,0) {$\bar e_5$};
\node at (9.8,0) {$\bar e_6$};
\end{tikzpicture}
\caption{The mapping torus $M_F$ and its cell structure.}
\label{Fig:cubulation}
\end{figure}
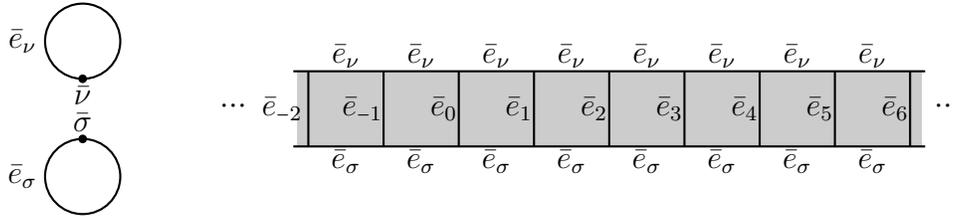

From this description of $M_F$, we can see it as the union of two circles with an infinite strip $[0,1] \times \mathbb R$ attached so that each boundary line is attached by wrapping around the circles infinitely many times in both directions (i.e.~the attaching map is the universal covering).  We get an exhaustion of $M_F$ by compact subsets whose inclusion is a homotopy equivalence, by attaching only compact subsets of the strips.  That is, we exhaust $[0,1] \times \mathbb R$ with rectangles $[0,1] \times [a,b]$, and then $M_F$ by the spaces obtained by attaching only these rectangles to the two circles.

The proof of Proposition~\ref{Prop:mapping torus handlebody} is obtained by ``thickening" the exhaustion of $M_F$ above inside $M_{F_\alpha}$.  The injective map $J_\alpha \colon \Gamma \to \Sigma_\alpha$ and its extension $\bar J_\alpha \colon M_F \to M_{F_\alpha}$ are not proper embeddings, and so this thickening requires some care.  It will be helpful to give a name to the quotient map from the product to the mapping torus,
\[ \pi_M \colon \Sigma_\alpha \times [0,1] \to M_{F_\alpha}.\]
As an abuse of notation (and of topology!), we will identify $M_F$ inside $M_{F_\alpha}$ via ${\bar J}_\alpha$.


We will describe some parts of our construction in terms of attaching handles, so we recall the terminology here to streamline the discussion that follows; see \cite[Sections 1-2]{Scharlemann}.  Given an integer $k \geq 0$, recall that a {\em ($3$--dimensional) $k$--handle} is the product of a $(3-k)$--dimensional disk and a $k$--dimensional disk, $\mathbb D^{3-k} \times \mathbb D^k$.  Given a $3$--manifold with non-empty boundary, $M$, we can attach a $k$--handle to $M$ by specifying an attaching map $\varphi \colon \mathbb D^{3-k} \times \partial \mathbb D^k \to \partial M$, which is a embedding, and then taking the quotient
\[ M \cup_\varphi \mathbb D^{3-k} \times \mathbb D^k = (M \sqcup_\varphi \mathbb D^{3-k} \times \mathbb D^k)/(x \sim \varphi(x)),\]
identifying every $x \in \mathbb D^{3-k} \times \partial \mathbb D^k$ with its $\varphi$--image.  A $1$--handle is $\mathbb D^2 \times [-1,1]$ and these are attached along $\varphi(\mathbb D^2 \times \{\pm 1\})$, a pair of disks in $\partial M$.  A $2$--handle is $[-1,1] \times \mathbb D^2$, and is attached along $\varphi([-1,1] \times \partial \mathbb D^2)$, an annulus in $\partial M$.


\begin{proof}[Proof of Proposition~\ref{Prop:mapping torus handlebody}]
Note that the interior of $V_2$ can be exhausted by $(H_{t})_{t \in (0,1)}$ where each $H_{i}$ is a closed genus $2$ handlebody and the ``gaps", $\overline{H_{t'}\setminus H_t}$, over all $t<t'$, are each homeomorphic to the ``thick'' genus 2 surface $S_2 \times [t,t']$. Our goal is to construct an exhaustion of $M_{F_{\alpha}}$ of the same form.  

Given any $t \in (0,1)$ and $j \in \mathbb Z$, let $tI_j \subset I_j$ denote the subinterval of $I_j$ centered at $y_j$ of length $t|I_j|$.  For all such $t$ and $j$, we use the quotient map $\pi_\alpha \colon \mathbb S^1_\alpha \times [-1,1] \to \mathbb S^2_\alpha$ and construct the following sets (see Figure \ref{Figure:Nbhds of sphere}):
\begin{itemize}
    \item $D_{N,t} = \pi_\alpha(\mathbb{S}_{\alpha}^1 \times [1-t,1])$,
    \item $D_{S,t} = \pi_\alpha(\mathbb{S}_{\alpha}^1 \times [-1,-1+t])$,
    \item $B_{j,t} =\pi_\alpha(tI_j\times[-1,1])$, and
    \item $I_{j,t}^\pm = \pi_\alpha(tI_j \times \{\pm (1-t)\})$.
\end{itemize}

\begin{figure}[ht]
    \centering
    \begin{tikzpicture}[x=0.75pt,y=0.75pt,yscale=-1,xscale=1]
    	\draw    (11.5,204.25) .. controls (90.5,234.75) and (200.5,234.25) .. (280.5,203.75) ;
    	\draw    [dash pattern={on 0.84pt off 2.51pt}](11.5,204.25) .. controls (51.5,174.25) and (240.5,173.75) .. (280.5,203.75) ;
    	\draw    (145.5,33.75) -- (11.5,204.25) ;
    	\draw    (145.5,33.75) -- (280.5,203.75) ;
    	\draw    (11.5,204.25) -- (144,374.25) ;
    	\draw    (279.5,201.25) -- (144,374.25) ;
    	\draw    (121.73,64) .. controls (132.45,76.18) and (159,75.82) .. (169.55,63.82) ;
    	\draw    [dash pattern={on 0.84pt off 2.51pt}](121.73,64) .. controls (138.45,56.91) and (151.55,56.73) .. (169.55,63.82) ;
    	\draw    (120.45,344) .. controls (133.55,355.64) and (156.09,355.27) .. (168.09,343.82) ;
    	\draw    [dash pattern={on 0.84pt off 2.51pt}](120.45,344) .. controls (129.91,333.45) and (160.09,333.82) .. (168.09,343.82) ;
    	\draw [line width=2.25]    (121.75,226.44) .. controls (135.38,227.31) and (165.5,227.69) .. (178.13,225.44) ;
    	\draw    (122.06,223.41) -- (121.82,229.29) ;
    	\draw    (177.71,222.24) -- (178.53,228.94) ;
    	\draw [color={rgb, 255:red, 155; green, 155; blue, 155 }  ,draw opacity=1 ]   (145.5,33.75) -- (151.29,227.43) ;
    	\draw [color={rgb, 255:red, 155; green, 155; blue, 155 }  ,draw opacity=1 ]   (151.29,227.43) -- (144,374.25) ;
    	\draw    (145.5,33.75) -- (140.14,227.14) ;
    	\draw    (140.14,227.14) -- (144,374.25) ;
    	\draw    (145.5,33.75) -- (161,227.43) ;
    	\draw    (161,227.43) -- (144,374.25) ;
    	\draw (128,14) node [anchor=north west][inner sep=0.75pt]   [align=left] {$\displaystyle N$};
    	\draw (128,377.5) node [anchor=north west][inner sep=0.75pt]   [align=left] {$\displaystyle S$};
    	\draw (162,33.4) node [anchor=north west][inner sep=0.75pt]    {$D_{N}{}_{,}{}_{t}$};
    	\draw (168.33,356.4) node [anchor=north west][inner sep=0.75pt]    {$D_{S}{}_{,}{}_{t}$};
    	\draw (124.06,230) node [anchor=north west][inner sep=0.75pt]    {$I_{j}$};
    	\draw (116,146.26) node [anchor=north west][inner sep=0.75pt]    {$B_{j,t}$};
	\draw (125,85) node {$I_{j,t}^+$};
	\draw[->] (130,80) -- (145,74);
	\draw (127,335) node {$I_{j,t}^-$};
	\draw[->] (132,343) -- (143,352);
	\end{tikzpicture}
    \caption{The three types of disks in $\Sigma_\alpha$, and intersection arcs.}
    \label{Figure:Nbhds of sphere}
\end{figure}
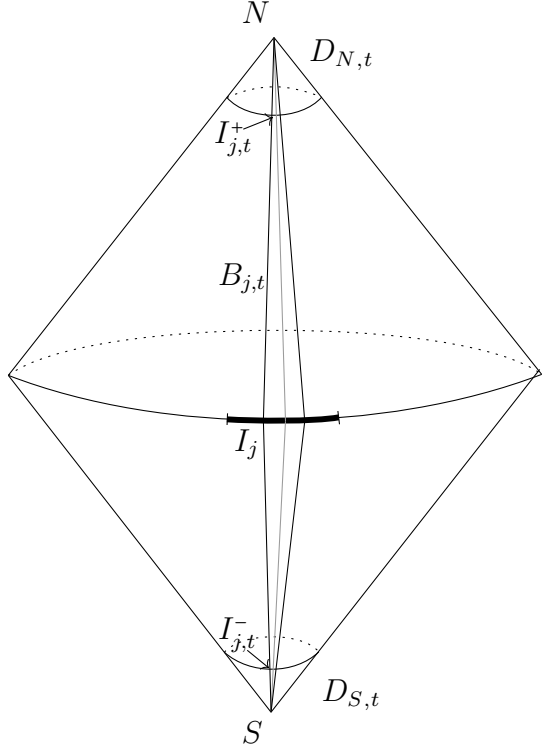

In addition, let $I_j^\circ$ denote the interior of $I_j$, and we define $B_j = \pi(I_j^\circ \times [-1,1])$, which is an open disk union with $\{N,S\}$, since $\pi_\alpha$ is injective on $I_j \times (-1,1)$.  With this definition, we observe that $\displaystyle{\bigcup_{t \in (0,1)} B_{j,t} = B_j}$, for all $j \in \mathbb Z$.  We identify each of these subspaces of $\Sigma_\alpha = \mathbb S_\alpha \setminus C_\alpha$ as subspaces of $M_{F_\alpha}$ via the homeomorphism $\Sigma_\alpha  \cong \Sigma_\alpha \times\{0\}$ composed with the composition with quotient map $\pi_M \colon \Sigma_\alpha \times [0,1] \to M_{F_\alpha}$.

It will also be useful to write $(\psi_s \colon M_{F_\alpha} \to M_{F_\alpha})_{s \in \mathbb R}$ to denote the suspension flow (the descent of the local flow on $\Sigma_\alpha \times [0,1]$ given by $(x,u) \mapsto (x,u+s)$, where defined).  The first return map of $(\psi_s)$ to $\Sigma_\alpha$ is $F_\alpha$.

As indicated above, our exhaustion of $M_{F_\alpha}$ will be defined by thickening the exhaustion of $M_F$.
First, the two circles, $\bar e_\nu,\bar e_\sigma$, in the cell structure on $M_F$ can be thickened in $M_{F_\alpha}$ to solid tori of varying thickness, depending on $t \in (0,1)$.  Explicitly, for each $t \in (0,1)$, set
\[ \mathcal D_t = \pi_M((D_{N,t} \cup D_{S,t}) \times [0,1]).\]
Since $D_{N,t}$ and $D_{S,t}$ are invariant by $F_\alpha$, it follows that $\mathcal D_t$ is $\psi_s$--invariant, for all $t$.  As the mapping torus of a pair of disks by a homeomorphism isotopic to the identity,  $\mathcal D_t$ is homeomorphic to a disjoint union of two solid tori having $\bar e_\nu \cup \bar e_\sigma$ as core curves, for all $t$.  Additionally, for all $t < t'$, $\mathcal D_t \subset \mathcal D_{t'}$ and $\overline{\mathcal D_{t'} - \mathcal D_t} \cong T^2 \times [t,t']$.

To thicken the rectangles in the infinite strip in $M_F$, we proceed as follows.
First, define
\[ \Psi \colon B_0 \times \mathbb R \to M_{F_\alpha}\]
by $\Psi(x,u) = \psi_u(x)$. We illustrate this map in Figure~\ref{fig:susp-flow}.

\begin{figure}[ht]
    \centering

\tikzset{every picture/.style={line width=0.75pt}} 

\begin{tikzpicture}[x=0.75pt,y=0.75pt,yscale=-1,xscale=1]

\draw    (111.9,83.51) .. controls (127.23,112.81) and (176.87,105.33) .. (183.62,83.51) ;
\draw    (120.85,93.92) .. controls (132.62,83.95) and (158.61,79.46) .. (175.53,94.17) ;

\draw   (0,93.25) .. controls (0,41.75) and (65.92,0) .. (147.23,0) .. controls (228.54,0) and (294.46,41.75) .. (294.46,93.25) .. controls (294.46,144.75) and (228.54,186.5) .. (147.23,186.5) .. controls (65.92,186.5) and (0,144.75) .. (0,93.25) -- cycle ;
\draw [color={rgb, 255:red, 0; green, 0; blue, 0 }  ,draw opacity=1 ]   (147.36,103.14) .. controls (125.82,115.96) and (127.4,180.09) .. (147.23,186.5) ;
\draw  [dash pattern={on 0.84pt off 2.51pt}]  (147.36,103.14) .. controls (168.38,109.55) and (165.22,180.62) .. (147.23,186.5) ;
\draw [color={rgb, 255:red, 155; green, 155; blue, 155 }  ,draw opacity=1 ][line width=2.5]    (135.88,119.56) .. controls (133.44,128.3) and (132.02,135.08) .. (131.89,141.51) ;
\draw [color={rgb, 255:red, 155; green, 155; blue, 155 }  ,draw opacity=1 ][line width=2.5]    (132.39,156.77) .. controls (133.68,167.06) and (133.49,164.82) .. (135.7,172.3) ;
\draw [color={rgb, 255:red, 144; green, 19; blue, 254 }  ,draw opacity=1 ]   (114.23,133.63) .. controls (168.01,130.47) and (239.96,125.8) .. (240.87,92.36) .. controls (241.79,58.93) and (201.11,16.38) .. (146.41,17.78) .. controls (91.71,19.19) and (33.33,39.76) .. (30.8,92.6) .. controls (28.27,145.44) and (126.18,169.98) .. (149.4,163.2) ;

\draw (118.11,111.43) node [anchor=north west][inner sep=0.75pt] {$I_{0}$};
\draw (116.97,167.54) node [anchor=north west][inner sep=0.75pt] {$I_{1}$};
\draw (284.07,147.24) node [anchor=north west][inner sep=0.75pt] {$\partial D_{t}$};

\end{tikzpicture}

    \caption{The map $\Psi$ restricted to one of the connected components of $\mathcal D_t$. We depict the orbit of the point $y_{j,t}:=\pi_M(y_j,(1-t))$.}
    \label{fig:susp-flow}
\end{figure}
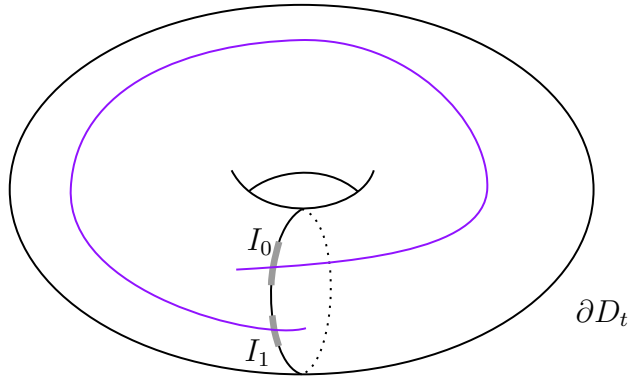

For each $t \in (0,1)$, $\Psi( B_{0,t} \times \mathbb R)$ is a thickening of the strip. Indeed, for all $j \in \mathbb Z$, $\Psi(\bar e_0 \times [j,j+1]))$ is precisely the $2$--cell bounded by $\bar e_j,\bar e_{j+1},\bar e_{\nu}, \bar e_{\sigma}$.
Moreover, the restriction of $\Psi$ to $(B_0 \setminus \{N,S\}) \times \mathbb R$ is injective, and the image defines a product neighborhood of the interior of the strip, while the complement $\Psi(\{N,S\} \times \mathbb R)$ maps to the core curves $\bar e_\nu \cup \bar e_\sigma$ of the solid tori $\mathcal D_t$, for all $t$.
The image $\Psi(B_0 \times \mathbb R)$ is $\psi_s$--invariant, as is $\Psi(B_{0,t} \times \mathbb R)$ for all $t \in (0,1)$. Moreover, for all $t < t'$,
\[ \Psi(B_{0,t} \times \mathbb R) \subset \Psi(B_{0,t'} \times \mathbb R).\]

Next we choose specific $1$--parameter family of rectangles in the exhaustion of the strip which we will thicken.  For this, let $\tau \colon (0,1) \to (0,\infty)$ be any orientation preserving homeomorphism and then set
\[ W_t = \mathcal D_t \cup \Psi\left(B_{0,t} \times [-\tau(t),\tau(t)] \right).\]

\medskip


\noindent
{\bf Claim 1.} $W_t \cong V_2$ for all $t \in (0,1)$.
\begin{proof} Observe that $W_t$ is the union of the two solid tori, $\mathcal D_t$, together with
\[ \Delta_t = \Psi(B_{0,t} \setminus (D_{N,t}^\circ \cup D_{S,t}^\circ) \times [-\tau(t),\tau(t)]).\]
Next, note that $B_{0,t} \setminus (D_{N,t}^\circ \cup D_{S,t}^\circ)$ is homemorphic to a disk.  Since $\{N,S\} \subset D_{N,t} \cup D_{S,t}$, it follows that the restriction of $\Psi$ to
\[ (B_{0,t} \setminus (D_{N,t}^\circ \cup D_{S,t}^\circ)) \times [-\tau(t),\tau(t)]\]
is injective, and hence $\Delta_t$ is a $3$--ball.
Moreover, this $3$--ball meets $\mathcal D_t$ in two $2$--disks,
\[ \Psi(I_{0,t}^\pm \times [-\tau(t),\tau(t)]).\]
That is, $W_t$ is obtained from $\mathcal D_t$ by attaching a single $1$--handle along two $2$--disks, one on each component of $\mathcal D_t$.  It follows that $W_t$ is a genus $2$ handlebody, as required.
\end{proof}

\noindent
{\bf Claim 2.} $\overline{W_{t'} \setminus W_t} \cong S_2 \times [t,t']$ for all $t < t'$.
\begin{proof} We can view $\overline{W_{t'} \setminus W_t}$ as obtained from $W_t$ by first expanding $\mathcal D_t$ to $\mathcal D_{t'}$, then expanding $\Phi(B_{0,t} \times [-\tau(t),\tau(t)]$ to $\Phi(B_{0,t'} \times [-\tau(t'),\tau(t')]$.   This first expansion has the effect of adding a collar neighborhood to two $1$--holed tori in $\partial W_t$, and the second expansion adds a collar neighborhood to an annulus.  Together the result is homeomorphic to the required product. See Figure~\ref{Fig:product}.
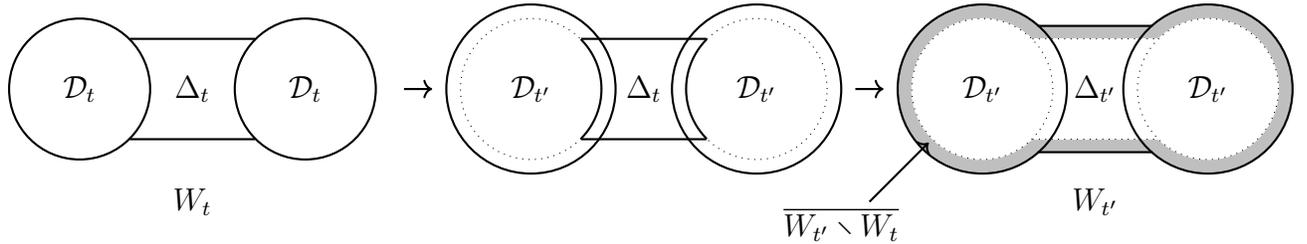
\begin{figure}[h]

\begin{tikzpicture}[scale = .75]
\draw[thick] (0,0) circle (1.25);
\draw[thick] (4,0) circle (1.25);
\draw[thick] (.89,.89) -- (3.11,.89);
\draw[thick] (.89,-.89) -- (3.11,-.89);
\node at (0,0) {$\mathcal D_t$};
\node at (4,0) {$\mathcal D_t$};
\node at (2,0) {$\Delta_t$};
\node at (2,-2) {$W_t$};
\draw[thick,->] (5.75,0) -- (6.25,0);
\draw[dotted] (8,0) circle (1.25);
\draw[dotted] (12,0) circle (1.25);
\draw[thick] (8,0) circle (1.5);
\draw[thick] (12,0) circle (1.5);
\draw[thick] (8.89,.89) -- (11.11,.89);
\draw[thick] (8.89,-.89) -- (11.11,-.89);
\node at (8,0) {$\mathcal D_{t'}$};
\node at (12,0) {$\mathcal D_{t'}$};
\node at (10,0) {$\Delta_t$};
\draw[thick,->] (13.75,0) -- (14.25,0);
\draw[thick] [domain=-45:45] plot ({8+1.25*cos(\x)}, {1.25*sin(\x)});
\draw[thick] [domain=-45:45] plot ({12-1.25*cos(\x)}, {1.25*sin(\x)});
\filldraw[lightgray] (16,0) circle (1.5);
\filldraw[lightgray] (20,0) circle (1.5);
\filldraw[lightgray] (16.98,1.12) -- (19.02,1.12) -- (19.02,.89) -- (16.98,.89) (16.98,1.12);
\filldraw[lightgray] (16.98,-1.12) -- (19.02,-1.12) -- (19.02,-.89) -- (16.98,-.89) (16.98,-1.12);
\filldraw[white] (16,.89) -- (20,.89) -- (20,-.89) -- (16,-.89) -- (16,.89);
\filldraw[white] (16,0) circle (1.25);
\filldraw[white] (20,0) circle (1.25);
\draw[dotted] (16,0) circle (1.25);
\draw[dotted] (20,0) circle (1.25);
\draw[thick] (16,0) circle (1.5);
\draw[thick] (20,0) circle (1.5);
\draw[dotted] (16.89,.89) -- (19.11,.89);
\draw[dotted] (16.89,-.89) -- (19.11,-.89);
\draw[thick] (16.98,1.12) -- (19.02,1.12);
\draw[thick] (16.98,-1.12) -- (19.02,-1.12);
\node at (16,0) {$\mathcal D_{t'}$};
\node at (20,0) {$\mathcal D_{t'}$};
\node at (18,0) {$\Delta_{t'}$};
\node at (18,-2) {$W_{t'}$};
\draw[thick,->] (14,-2) -- (15.05,-.95);
\node at (13.5,-2.4) {$\overline{W_{t'} \setminus W_t}$};
\end{tikzpicture}
\caption{A schematic diagram describing how to enlarge $W_t$ to $W_{t'}$ in two steps.}
\label{Fig:product}
\end{figure}
\end{proof}
The two claims complete the proof of Proposition~\ref{Prop:mapping torus handlebody} .
\end{proof}


\subsection{New constructions}
	\label{ssec:new-constructions}
We now begin describing variations on the construction above, starting with the simplest such variation.
Consider a homeomorphism $f_{\alpha,n}$ which results from blowing-up $n\geq 2$ orbits of the irrational rotation $\rho_\alpha$, each orbit being blown-up as detailed in Section~\ref{section:Denjoy}. By the same arguments detailed in that section, $f_{\alpha,n}$ can be extended to a map $F_{\alpha,n}\in\Homeo(\Sigma_{\alpha,n})$, where $\Sigma_{\alpha,n}:=\mathbb{S}_\alpha^2\setminus C_{\alpha,n}$ and $C_{\alpha,n}$ is homeomorphic to a Cantor set.
Analyzing the proof of Proposition \ref{Prop:mapping torus handlebody}, the blown-up orbit resulted in the $1$--handle joining two solid tori in $M_{F_{\alpha}}$. Following the same argumentation, blowing-up $n$ orbits produces $n$ $1$-handles joining two solid tori in $M_{F_{\alpha,n}}$. This way, we obtain the following:

\begin{proposition}
	\label{prop:blup-n-orbits}
The mapping torus of $F_{\alpha,n}$ is homeomorphic to $V_{n+1}$ and defines a fibration of $\{V_{n+1}\xrightarrow{p_{\alpha,n}}\mathbb{S}^1\}$ where every fiber is homeomorphic to the Cantor tree surface $\Sigma_0$.
\end{proposition}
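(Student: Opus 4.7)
The plan is to imitate the proof of Proposition~\ref{Prop:mapping torus handlebody} essentially verbatim, with each of the $n$ blown-up orbits contributing a new $1$-handle. Fix representatives $x^{(1)}_0,\dots,x^{(n)}_0$ for the $n$ distinct $\rho_\alpha$-orbits being blown up, and for each $k=1,\dots,n$ let $I^{(k)}_j$ be the interval replacing $x^{(k)}_j:=\rho_\alpha^j(x^{(k)}_0)$, with midpoint $y^{(k)}_j$. Since distinct orbits of $\rho_\alpha$ are disjoint, the intervals $\{I^{(k)}_j\}_{k,j}$ are pairwise disjoint in $\mathbb{S}^1_{\alpha,n}$. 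All the constructions of Section~\ref{section:Denjoy} and Section~\ref{section:proofs} extend with the obvious indexing, yielding the sphere $\mathbb{S}^2_{\alpha,n}$, the invariant Cantor set $C_{\alpha,n}$, the surface $\Sigma_{\alpha,n}$, and the homeomorphism $F_{\alpha,n}$.

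First I would establish a spine analogue: form the graph $\Gamma_n$ with two vertices $\nu,\sigma$ and edges $\{e^{(k)}_j\}_{k=1,\dots,n,\ j\in\mathbb Z}$, each joining $\nu$ to $\sigma$. Define $J_{\alpha,n}\colon\Gamma_n\to \Sigma_{\alpha,n}$ sending $\nu\mapsto N$, $\sigma\mapsto S$, and $e^{(k)}_j$ homeomorphically onto $\pi_\alpha(\{y^{(k)}_j\}\times(-1,1))$. The proof of Lemma~\ref{Lemma:spine} goes through mutatis mutandis (exhaust $\Gamma_n$ by finite subgraphs using finitely many edges of each of the $n$ families; each finite subgraph is a spine of a sphere minus finitely many points), showing $J_{\alpha,n}$ induces a $\pi_1$-isomorphism.

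Next I would construct the handlebody exhaustion. Keep $\mathcal D_t$ defined exactly as in the proof of Proposition~\ref{Prop:mapping torus handlebody}, yielding two disjoint solid tori thickening the poles. For each orbit $k$, let $B^{(k)}_{0,t}$ be the disk $\pi_\alpha(tI^{(k)}_0\times[-1,1])$, and let $\Psi^{(k)}\colon B^{(k)}_0\times\mathbb R\to M_{F_{\alpha,n}}$ be the associated suspension-flow map. Then set
\[
W_t\ =\ \mathcal D_t\ \cup\ \bigcup_{k=1}^{n}\Psi^{(k)}\!\left(B^{(k)}_{0,t}\times[-\tau(t),\tau(t)]\right).
\]
Disjointness of the $n$ orbits guarantees that the $n$ added $3$-balls are disjoint, and each one meets $\mathcal D_t$ in a pair of disks—one on each solid torus. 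Hence $W_t$ is obtained from the two solid tori by attaching $n$ disjoint $1$-handles, each connecting the two tori. An Euler characteristic count gives $\chi(W_t)=0+0-n=-n=1-(n+1)$, so $W_t$ is a connected handlebody of genus $n+1$, proving the analogue of Claim~1.

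For the product structure in the gaps, the same collar argument as in Claim~2 applies: going from $W_t$ to $W_{t'}$, enlarging $\mathcal D_t$ to $\mathcal D_{t'}$ adds a collar on two $n$-holed tori in $\partial W_t$ (each torus now carries $n$ disk-scars from the $n$ attached handles), while enlarging the parameter $\tau(t)\to\tau(t')$ and the widths of the handles adds $n$ collars on annuli; these collars glue into a single product collar $S_{n+1}\times[t,t']$. Thus $(W_t)_{t\in(0,1)}$ is the desired handlebody exhaustion, $M_{F_{\alpha,n}}\cong V_{n+1}$, and the projection to the second factor of $\Sigma_{\alpha,n}\times[0,1]/\!\sim$ descends to the fibration $p_{\alpha,n}$. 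The fact that every fiber is homeomorphic to $\Sigma_0$ follows from the Kerékjártó–Richards classification, exactly as in Section~\ref{Sec:Denjoy sphere}. There is no real obstacle here beyond careful bookkeeping; the main conceptual point is that the $n$ blown-up orbits are disjoint and so produce $n$ independent $1$-handles in the mapping torus.
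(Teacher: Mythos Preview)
Your proposal is correct and follows exactly the approach the paper itself indicates: the paper's justification of Proposition~\ref{prop:blup-n-orbits} is simply the observation that in the proof of Proposition~\ref{Prop:mapping torus handlebody} the single blown-up orbit produced the lone $1$--handle joining the two solid tori, so blowing up $n$ orbits produces $n$ such $1$--handles and hence a genus~$n+1$ handlebody. You have carried out precisely this argument in detail, including the analogues of Claims~1 and~2, and your bookkeeping (disjointness of the $n$ handle-families because the $\rho_\alpha$-orbits are disjoint, and the genus count) is sound.
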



\begin{remark}
	\label{rmk:taking-powers}
One could also consider, for each $n\geq 2$, the $n^{th}$-power $f_\alpha^n$ of the Denjoy homeomorphism $f_\alpha$. The map $f_\alpha^n$ leaves also the Cantor set $C_\alpha$ invariant, but it has $n$ different wandering intervals, and extends to $F_\alpha^n$ in $\Homeo(\Sigma_\alpha)$.  The mapping torus $M_{F_\alpha^n}$ is homeomorphic to $V_{n+1}$ and defines a fibration $\{V_{k+1}\xrightarrow{p_{\alpha,k}}\mathbb{S}^1\}$ where every fiber is homeomorphic to the Cantor tree surface $\Sigma_0$; one can see this as an application of Proposition~\ref{prop:blup-n-orbits}, but it also follows from the fact that $M_{F_\alpha^n}$ is an $n$--fold cover of $M_{F_\alpha} \cong V_2$.
\end{remark}


In the next section, we describe some more elaborate variations on the construction from this section.

\section{Other examples}
	\label{section:examples}

In this section we use the homeomorphism $F_\alpha$ to construct ``building blocks" that can be used to produce homeomorphisms of other infinite type surfaces whose mapping tori are tame.  We describe a few such constructions, but there are numerous others.  Before we describe the specific constructions, we make some basic topological observations about gluing together and decomposing mapping tori in terms of gluing together and decomposing the fibers and their monodromies.

Suppose $g_i \colon Z_i \to Z_i$ are homeomorphisms of surfaces with non-empty boundary $Z_i$, for $i=1,2$.  Further suppose that there exists a homeomorphism $\varphi \colon \partial Z_1 \to \partial Z_2$ conjugating the restriction $g_1|_{\partial Z_1}$ to $g_2|_{\partial Z_2}$.  We may glue $Z_1$ and $Z_2$ together along the boundaries by $\varphi$, $Z = Z_1 \cup_\varphi Z_2$, and the homeomorphisms $g_1,g_2$ glue together to produce a homeomorphism $g \colon Z \to Z$ so that $g|_{Z_i} = g_i$, for $i=1,2$.  The inclusions $Z_i \to Z$ determine inclusions $Z_i \times [0,1] \to Z \times [0,1]$, for $i=1,2$.  These descend to embeddings
\[ M_{g_i} \to M_g,\]
for $i=1,2$, and the images meet precisely along their boundaries.

Equivalently, we can view $M_g$ as obtained by gluing together $M_{g_1}$ and $M_{g_2}$ along their boundaries.  Explicitly, the homeomorphism $\varphi \times \mbox{id}_{[0,1]} \colon (\partial Z_1) \times [0,1] \to (\partial Z_2) \times [0,1]$ descends to a homeomorphism $\bar \varphi \colon \partial M_{g_1} \to \partial M_{g_2}$, and $M_g$ is obtained by gluing $M_{g_1}$ to $M_{g_2}$ via this homeomorphism,
\[ M_g = M_{g_1} \cup_{\bar \varphi} M_{g_2}.\]

We will also want to run this construction ``in reverse", in the following sense.  Suppose $F \colon \Sigma \to \Sigma$ is a homeomorphism and $Y \subset \Sigma$ is a properly embedded subsurface such that $F(Y) = Y$.  The proper embedding assumption implies that $Y^c := \Sigma - int(Y)$ is also a properly embedded subsurface, and we set $F_Y = F|_Y$, and $F_{Y^c} = F|_{Y^c}$.  Then the mapping torus $M_{F_Y}$ is given by,
\[ M_{F_Y} = M_F \setminus int(M_{F_{Y^c}}).\]

In the next two sections, we describe two particular instances of these constructions.  We will make explicit use of the terminology and notation from \S\ref{section:proofs} in what follows.

\subsection{Building blocks: Disk and annulus minus Cantor set}

First, recall that in the exhaustion of $M_{F_\alpha}$, there are the two solid tori, $\mathcal D_{1/2} \subset M_{F_\alpha}$.  These solid tori are mapping tori of $F_\alpha$--invariant disks $D_{N,1/2} \cup D_{S,1/2}$, or more precisely the restriction of $F_\alpha$ to the union of these disks.  In fact, each of $D_{N,1/2}$ and $D_{S,1/2}$ are invariant by $F_\alpha$, and we write
\[ \mathcal D_{1/2} = \mathcal D_{N,1/2} \sqcup \mathcal D_{S,1/2},\]
where $\mathcal D_{N,1/2}$ and $\mathcal D_{S,1/2}$ are each solid tori, obtained as the mapping tori of $F_\alpha$ restricted to each of $D_{N,1/2}$ and $D_{S,1/2}$, respectively.

Now, set $Y^1_\alpha = \Sigma_\alpha \setminus int(D_{N,1/2})$.  Since $Y^1_\alpha$ is also invariant by $F_\alpha$, we can define
\[ h^1_\alpha = (F_\alpha)|_{Y^1_\alpha} \colon Y^1_\alpha \to Y^1_\alpha.\]
As described above, the mapping torus of $h^1_\alpha$ is obtained from $M_{F_\alpha}$ by removing the mapping torus of the restriction of $F_\alpha$ to $D_{N,1/2}$,
\[ M_{h^1_\alpha} = M_{F_\alpha} \setminus int(\mathcal D_{N,1/2}).\]
That is, $M_{h^1_\alpha}$ is obtained by removing the open solid torus $int(\mathcal D_{N,1/2})$ from the open handelbody $M_{F_\alpha} \cong V_2$, resulting in the $3$--manifold with non-empty boundary shown in Figure~\ref{fig:MThalpha1}; the boundary consists precisely of the torus ``inside" the handlebody (the outer boundary of the handlebody is still missing).

If instead we set $Y^2_\alpha = \Sigma \setminus int(D_{N,1/2} \cup D_{S,1/2})$ and $h^2_\alpha = (F_\alpha)|_{Y^2_\alpha}$, then $M_{h^2_\alpha}$ is homeomorphic to $M_{F_\alpha}$ minus the interior of the union of the two solid tori, $M_{h^2_\alpha} = M_{F_\alpha} \setminus int(\mathcal D_{1/2})$ as on Figure~\ref{fig:MThalpha2}.  Again, the boundary consists of two tori.

 Each of $M_{h_\alpha^1}$ and $M_{h_\alpha^2}$ is obtained by removing a boundary components from a {\em compression body}, which is a $3$--manifold obtained by attaching $1$--handles to a product of a surface with an interval; see \cite[Section~2.2]{Scharlemann}.   Explicitly, we can start with $T^2 \times [0,1]$ and attach a $1$--handle along a pair of disks in $T^2 \times \{1\}$.  The resulting manifold is a compression body with one genus $1$ boundary component and one genus $2$ boundary component, and then $M_{h_\alpha^1}$ is obtained by deleting the genus $2$ boundary component.  Similarly, we could start with a disjoint union of two tori,
\[ T^2 \times \{1,2\} = T^2_1 \sqcup T^2_2 \]
and take the product with $[0,1]$ to produce a disconnected $3$--manifold,
\[ (T^2_1 \sqcup T^2_2) \times [0,1]. \]
Attaching a $1$--handle attached along a pair of disks, one in $T^2_1 \times \{1\}$ and the other in $T^2_2 \times \{1\}$, results in a $3$--manifold with two genus $1$ boundary components and one genus $2$ boundary component; $M_{h_\alpha^2}$ is obtained from this by deleted the genus $2$ boundary component.



\begin{figure}[ht]
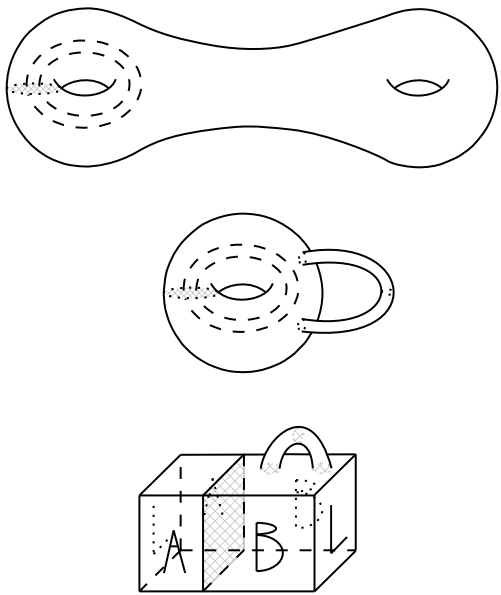



\tikzset{
pattern size/.store in=\mcSize,
pattern size = 5pt,
pattern thickness/.store in=\mcThickness,
pattern thickness = 0.3pt,
pattern radius/.store in=\mcRadius,
pattern radius = 1pt}
\makeatletter
\pgfutil@ifundefined{pgf@pattern@name@_4ygwwc2vs}{
\pgfdeclarepatternformonly[\mcThickness,\mcSize]{_4ygwwc2vs}
{\pgfqpoint{0pt}{0pt}}
{\pgfpoint{\mcSize}{\mcSize}}
{\pgfpoint{\mcSize}{\mcSize}}
{
\pgfsetcolor{\tikz@pattern@color}
\pgfsetlinewidth{\mcThickness}
\pgfpathmoveto{\pgfqpoint{0pt}{\mcSize}}
\pgfpathlineto{\pgfpoint{\mcSize+\mcThickness}{-\mcThickness}}
\pgfpathmoveto{\pgfqpoint{0pt}{0pt}}
\pgfpathlineto{\pgfpoint{\mcSize+\mcThickness}{\mcSize+\mcThickness}}
\pgfusepath{stroke}
}}
\makeatother


\tikzset{
pattern size/.store in=\mcSize,
pattern size = 5pt,
pattern thickness/.store in=\mcThickness,
pattern thickness = 0.3pt,
pattern radius/.store in=\mcRadius,
pattern radius = 1pt}
\makeatletter
\pgfutil@ifundefined{pgf@pattern@name@_f3gvhf3t5}{
\pgfdeclarepatternformonly[\mcThickness,\mcSize]{_f3gvhf3t5}
{\pgfqpoint{0pt}{0pt}}
{\pgfpoint{\mcSize}{\mcSize}}
{\pgfpoint{\mcSize}{\mcSize}}
{
\pgfsetcolor{\tikz@pattern@color}
\pgfsetlinewidth{\mcThickness}
\pgfpathmoveto{\pgfqpoint{0pt}{\mcSize}}
\pgfpathlineto{\pgfpoint{\mcSize+\mcThickness}{-\mcThickness}}
\pgfpathmoveto{\pgfqpoint{0pt}{0pt}}
\pgfpathlineto{\pgfpoint{\mcSize+\mcThickness}{\mcSize+\mcThickness}}
\pgfusepath{stroke}
}}
\makeatother


\tikzset{
pattern size/.store in=\mcSize,
pattern size = 5pt,
pattern thickness/.store in=\mcThickness,
pattern thickness = 0.3pt,
pattern radius/.store in=\mcRadius,
pattern radius = 1pt}
\makeatletter
\pgfutil@ifundefined{pgf@pattern@name@_e03f30mni}{
\pgfdeclarepatternformonly[\mcThickness,\mcSize]{_e03f30mni}
{\pgfqpoint{0pt}{0pt}}
{\pgfpoint{\mcSize}{\mcSize}}
{\pgfpoint{\mcSize}{\mcSize}}
{
\pgfsetcolor{\tikz@pattern@color}
\pgfsetlinewidth{\mcThickness}
\pgfpathmoveto{\pgfqpoint{0pt}{\mcSize}}
\pgfpathlineto{\pgfpoint{\mcSize+\mcThickness}{-\mcThickness}}
\pgfpathmoveto{\pgfqpoint{0pt}{0pt}}
\pgfpathlineto{\pgfpoint{\mcSize+\mcThickness}{\mcSize+\mcThickness}}
\pgfusepath{stroke}
}}
\makeatother


\tikzset{
pattern size/.store in=\mcSize,
pattern size = 5pt,
pattern thickness/.store in=\mcThickness,
pattern thickness = 0.3pt,
pattern radius/.store in=\mcRadius,
pattern radius = 1pt}
\makeatletter
\pgfutil@ifundefined{pgf@pattern@name@_cy45yo1gf}{
\pgfdeclarepatternformonly[\mcThickness,\mcSize]{_cy45yo1gf}
{\pgfqpoint{0pt}{0pt}}
{\pgfpoint{\mcSize}{\mcSize}}
{\pgfpoint{\mcSize}{\mcSize}}
{
\pgfsetcolor{\tikz@pattern@color}
\pgfsetlinewidth{\mcThickness}
\pgfpathmoveto{\pgfqpoint{0pt}{\mcSize}}
\pgfpathlineto{\pgfpoint{\mcSize+\mcThickness}{-\mcThickness}}
\pgfpathmoveto{\pgfqpoint{0pt}{0pt}}
\pgfpathlineto{\pgfpoint{\mcSize+\mcThickness}{\mcSize+\mcThickness}}
\pgfusepath{stroke}
}}
\makeatother


\tikzset{
pattern size/.store in=\mcSize,
pattern size = 5pt,
pattern thickness/.store in=\mcThickness,
pattern thickness = 0.3pt,
pattern radius/.store in=\mcRadius,
pattern radius = 1pt}
\makeatletter
\pgfutil@ifundefined{pgf@pattern@name@_4mqkm09zd}{
\pgfdeclarepatternformonly[\mcThickness,\mcSize]{_4mqkm09zd}
{\pgfqpoint{0pt}{0pt}}
{\pgfpoint{\mcSize}{\mcSize}}
{\pgfpoint{\mcSize}{\mcSize}}
{
\pgfsetcolor{\tikz@pattern@color}
\pgfsetlinewidth{\mcThickness}
\pgfpathmoveto{\pgfqpoint{0pt}{\mcSize}}
\pgfpathlineto{\pgfpoint{\mcSize+\mcThickness}{-\mcThickness}}
\pgfpathmoveto{\pgfqpoint{0pt}{0pt}}
\pgfpathlineto{\pgfpoint{\mcSize+\mcThickness}{\mcSize+\mcThickness}}
\pgfusepath{stroke}
}}
\makeatother


\tikzset{
pattern size/.store in=\mcSize,
pattern size = 5pt,
pattern thickness/.store in=\mcThickness,
pattern thickness = 0.3pt,
pattern radius/.store in=\mcRadius,
pattern radius = 1pt}
\makeatletter
\pgfutil@ifundefined{pgf@pattern@name@_vfrzxrixf}{
\pgfdeclarepatternformonly[\mcThickness,\mcSize]{_vfrzxrixf}
{\pgfqpoint{0pt}{0pt}}
{\pgfpoint{\mcSize}{\mcSize}}
{\pgfpoint{\mcSize}{\mcSize}}
{
\pgfsetcolor{\tikz@pattern@color}
\pgfsetlinewidth{\mcThickness}
\pgfpathmoveto{\pgfqpoint{0pt}{\mcSize}}
\pgfpathlineto{\pgfpoint{\mcSize+\mcThickness}{-\mcThickness}}
\pgfpathmoveto{\pgfqpoint{0pt}{0pt}}
\pgfpathlineto{\pgfpoint{\mcSize+\mcThickness}{\mcSize+\mcThickness}}
\pgfusepath{stroke}
}}
\makeatother
\tikzset{every picture/.style={line width=0.75pt}} 



\caption{Three figures illustrating homeomorphic instances of the mapping torus of $h^1_\alpha$, the restriction of $F_\alpha$ to $Y^1_\alpha = \Sigma_\alpha - int(D_{N,1/2})$.}\label{fig:MThalpha1}
\end{figure}

\begin{figure}[ht]
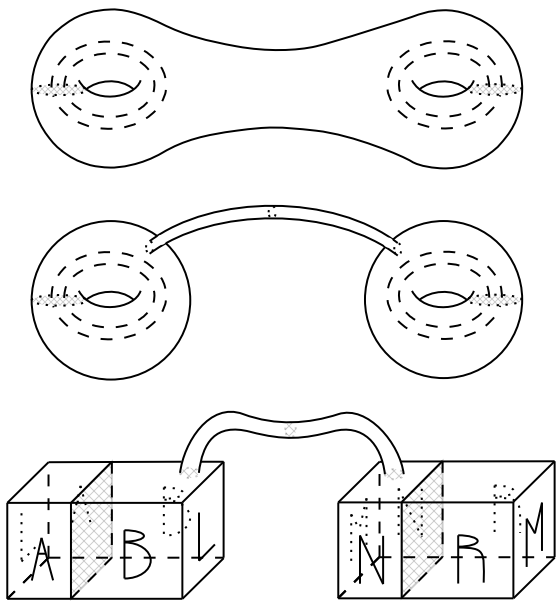



\tikzset{
pattern size/.store in=\mcSize,
pattern size = 5pt,
pattern thickness/.store in=\mcThickness,
pattern thickness = 0.3pt,
pattern radius/.store in=\mcRadius,
pattern radius = 1pt}
\makeatletter
\pgfutil@ifundefined{pgf@pattern@name@_hlitene11}{
\pgfdeclarepatternformonly[\mcThickness,\mcSize]{_hlitene11}
{\pgfqpoint{0pt}{0pt}}
{\pgfpoint{\mcSize}{\mcSize}}
{\pgfpoint{\mcSize}{\mcSize}}
{
\pgfsetcolor{\tikz@pattern@color}
\pgfsetlinewidth{\mcThickness}
\pgfpathmoveto{\pgfqpoint{0pt}{\mcSize}}
\pgfpathlineto{\pgfpoint{\mcSize+\mcThickness}{-\mcThickness}}
\pgfpathmoveto{\pgfqpoint{0pt}{0pt}}
\pgfpathlineto{\pgfpoint{\mcSize+\mcThickness}{\mcSize+\mcThickness}}
\pgfusepath{stroke}
}}
\makeatother


\tikzset{
pattern size/.store in=\mcSize,
pattern size = 5pt,
pattern thickness/.store in=\mcThickness,
pattern thickness = 0.3pt,
pattern radius/.store in=\mcRadius,
pattern radius = 1pt}
\makeatletter
\pgfutil@ifundefined{pgf@pattern@name@_vjy3gwiik}{
\pgfdeclarepatternformonly[\mcThickness,\mcSize]{_vjy3gwiik}
{\pgfqpoint{0pt}{0pt}}
{\pgfpoint{\mcSize}{\mcSize}}
{\pgfpoint{\mcSize}{\mcSize}}
{
\pgfsetcolor{\tikz@pattern@color}
\pgfsetlinewidth{\mcThickness}
\pgfpathmoveto{\pgfqpoint{0pt}{\mcSize}}
\pgfpathlineto{\pgfpoint{\mcSize+\mcThickness}{-\mcThickness}}
\pgfpathmoveto{\pgfqpoint{0pt}{0pt}}
\pgfpathlineto{\pgfpoint{\mcSize+\mcThickness}{\mcSize+\mcThickness}}
\pgfusepath{stroke}
}}
\makeatother


\tikzset{
pattern size/.store in=\mcSize,
pattern size = 5pt,
pattern thickness/.store in=\mcThickness,
pattern thickness = 0.3pt,
pattern radius/.store in=\mcRadius,
pattern radius = 1pt}
\makeatletter
\pgfutil@ifundefined{pgf@pattern@name@_alt0oe3wz}{
\pgfdeclarepatternformonly[\mcThickness,\mcSize]{_alt0oe3wz}
{\pgfqpoint{0pt}{0pt}}
{\pgfpoint{\mcSize}{\mcSize}}
{\pgfpoint{\mcSize}{\mcSize}}
{
\pgfsetcolor{\tikz@pattern@color}
\pgfsetlinewidth{\mcThickness}
\pgfpathmoveto{\pgfqpoint{0pt}{\mcSize}}
\pgfpathlineto{\pgfpoint{\mcSize+\mcThickness}{-\mcThickness}}
\pgfpathmoveto{\pgfqpoint{0pt}{0pt}}
\pgfpathlineto{\pgfpoint{\mcSize+\mcThickness}{\mcSize+\mcThickness}}
\pgfusepath{stroke}
}}
\makeatother


\tikzset{
pattern size/.store in=\mcSize,
pattern size = 5pt,
pattern thickness/.store in=\mcThickness,
pattern thickness = 0.3pt,
pattern radius/.store in=\mcRadius,
pattern radius = 1pt}
\makeatletter
\pgfutil@ifundefined{pgf@pattern@name@_qlgng2g6o}{
\pgfdeclarepatternformonly[\mcThickness,\mcSize]{_qlgng2g6o}
{\pgfqpoint{0pt}{0pt}}
{\pgfpoint{\mcSize}{\mcSize}}
{\pgfpoint{\mcSize}{\mcSize}}
{
\pgfsetcolor{\tikz@pattern@color}
\pgfsetlinewidth{\mcThickness}
\pgfpathmoveto{\pgfqpoint{0pt}{\mcSize}}
\pgfpathlineto{\pgfpoint{\mcSize+\mcThickness}{-\mcThickness}}
\pgfpathmoveto{\pgfqpoint{0pt}{0pt}}
\pgfpathlineto{\pgfpoint{\mcSize+\mcThickness}{\mcSize+\mcThickness}}
\pgfusepath{stroke}
}}
\makeatother


\tikzset{
pattern size/.store in=\mcSize,
pattern size = 5pt,
pattern thickness/.store in=\mcThickness,
pattern thickness = 0.3pt,
pattern radius/.store in=\mcRadius,
pattern radius = 1pt}
\makeatletter
\pgfutil@ifundefined{pgf@pattern@name@_jtnssmqlm}{
\pgfdeclarepatternformonly[\mcThickness,\mcSize]{_jtnssmqlm}
{\pgfqpoint{0pt}{0pt}}
{\pgfpoint{\mcSize}{\mcSize}}
{\pgfpoint{\mcSize}{\mcSize}}
{
\pgfsetcolor{\tikz@pattern@color}
\pgfsetlinewidth{\mcThickness}
\pgfpathmoveto{\pgfqpoint{0pt}{\mcSize}}
\pgfpathlineto{\pgfpoint{\mcSize+\mcThickness}{-\mcThickness}}
\pgfpathmoveto{\pgfqpoint{0pt}{0pt}}
\pgfpathlineto{\pgfpoint{\mcSize+\mcThickness}{\mcSize+\mcThickness}}
\pgfusepath{stroke}
}}
\makeatother


\tikzset{
pattern size/.store in=\mcSize,
pattern size = 5pt,
pattern thickness/.store in=\mcThickness,
pattern thickness = 0.3pt,
pattern radius/.store in=\mcRadius,
pattern radius = 1pt}
\makeatletter
\pgfutil@ifundefined{pgf@pattern@name@_apll05on5}{
\pgfdeclarepatternformonly[\mcThickness,\mcSize]{_apll05on5}
{\pgfqpoint{0pt}{0pt}}
{\pgfpoint{\mcSize}{\mcSize}}
{\pgfpoint{\mcSize}{\mcSize}}
{
\pgfsetcolor{\tikz@pattern@color}
\pgfsetlinewidth{\mcThickness}
\pgfpathmoveto{\pgfqpoint{0pt}{\mcSize}}
\pgfpathlineto{\pgfpoint{\mcSize+\mcThickness}{-\mcThickness}}
\pgfpathmoveto{\pgfqpoint{0pt}{0pt}}
\pgfpathlineto{\pgfpoint{\mcSize+\mcThickness}{\mcSize+\mcThickness}}
\pgfusepath{stroke}
}}
\makeatother


\tikzset{
pattern size/.store in=\mcSize,
pattern size = 5pt,
pattern thickness/.store in=\mcThickness,
pattern thickness = 0.3pt,
pattern radius/.store in=\mcRadius,
pattern radius = 1pt}
\makeatletter
\pgfutil@ifundefined{pgf@pattern@name@_rai70uakz}{
\pgfdeclarepatternformonly[\mcThickness,\mcSize]{_rai70uakz}
{\pgfqpoint{0pt}{0pt}}
{\pgfpoint{\mcSize}{\mcSize}}
{\pgfpoint{\mcSize}{\mcSize}}
{
\pgfsetcolor{\tikz@pattern@color}
\pgfsetlinewidth{\mcThickness}
\pgfpathmoveto{\pgfqpoint{0pt}{\mcSize}}
\pgfpathlineto{\pgfpoint{\mcSize+\mcThickness}{-\mcThickness}}
\pgfpathmoveto{\pgfqpoint{0pt}{0pt}}
\pgfpathlineto{\pgfpoint{\mcSize+\mcThickness}{\mcSize+\mcThickness}}
\pgfusepath{stroke}
}}
\makeatother


\tikzset{
pattern size/.store in=\mcSize,
pattern size = 5pt,
pattern thickness/.store in=\mcThickness,
pattern thickness = 0.3pt,
pattern radius/.store in=\mcRadius,
pattern radius = 1pt}
\makeatletter
\pgfutil@ifundefined{pgf@pattern@name@_a2ql54diu}{
\pgfdeclarepatternformonly[\mcThickness,\mcSize]{_a2ql54diu}
{\pgfqpoint{0pt}{0pt}}
{\pgfpoint{\mcSize}{\mcSize}}
{\pgfpoint{\mcSize}{\mcSize}}
{
\pgfsetcolor{\tikz@pattern@color}
\pgfsetlinewidth{\mcThickness}
\pgfpathmoveto{\pgfqpoint{0pt}{\mcSize}}
\pgfpathlineto{\pgfpoint{\mcSize+\mcThickness}{-\mcThickness}}
\pgfpathmoveto{\pgfqpoint{0pt}{0pt}}
\pgfpathlineto{\pgfpoint{\mcSize+\mcThickness}{\mcSize+\mcThickness}}
\pgfusepath{stroke}
}}
\makeatother


\tikzset{
pattern size/.store in=\mcSize,
pattern size = 5pt,
pattern thickness/.store in=\mcThickness,
pattern thickness = 0.3pt,
pattern radius/.store in=\mcRadius,
pattern radius = 1pt}
\makeatletter
\pgfutil@ifundefined{pgf@pattern@name@_v3sv8q07v}{
\pgfdeclarepatternformonly[\mcThickness,\mcSize]{_v3sv8q07v}
{\pgfqpoint{0pt}{0pt}}
{\pgfpoint{\mcSize}{\mcSize}}
{\pgfpoint{\mcSize}{\mcSize}}
{
\pgfsetcolor{\tikz@pattern@color}
\pgfsetlinewidth{\mcThickness}
\pgfpathmoveto{\pgfqpoint{0pt}{\mcSize}}
\pgfpathlineto{\pgfpoint{\mcSize+\mcThickness}{-\mcThickness}}
\pgfpathmoveto{\pgfqpoint{0pt}{0pt}}
\pgfpathlineto{\pgfpoint{\mcSize+\mcThickness}{\mcSize+\mcThickness}}
\pgfusepath{stroke}
}}
\makeatother
\tikzset{every picture/.style={line width=0.75pt}} 



    \caption{Three figures illustrating homeomorphic instances of the mapping torus of $h^2_\alpha$, $F_\alpha$ to $Y^2_\alpha = \Sigma_\alpha \setminus int(D_{N,1/2} \cup D_{S,1/2})$.}
    \label{fig:MThalpha2}
\end{figure}

We can apply an isotopy to $F_\alpha$, preserving $D_{N,1/2} \cup D_{S,1/2}$, so that the restriction to $D_{N,1/2} \cup D_{S,1/2}$ is the identity.  The restrictions $h^1_\alpha$ and $h^2_\alpha$ to $Y^1_\alpha$ and $Y^2_\alpha$ are isotopic to the restriction of the isotoped version of $F_\alpha$, so the resulting mapping tori are homeomorphic.  However, the new homeomorphisms of $Y^1_\alpha$ and $Y^2_\alpha$ are the identity on the boundary.  Replacing $h^1_\alpha$ and $h^2_\alpha$ with their isotoped versions, we have the following.

\begin{proposition}\label{Prop:discs and annuli building blocks} Suppose $\alpha \in (0,1) \setminus \mathbb Q$.  Then
\begin{enumerate}
\item there is a homeomorphism $h^1_\alpha$ of a disk minus a Cantor set $Y^1_\alpha$ such that $h^1_\alpha$ is the identity on the boundary and $M_{h^1_\alpha}$ is homeomorphic to $V_2 \setminus V_1$, where $V_1 \subset V_2$ is an unknotted core solid torus.
\item  there is a homeomorphism $h^2_\alpha$ of an annulus minus a Cantor set $Y^2_\alpha$ such that $h^2_\alpha$ is the identity on the boundary and $M_{h^2_\alpha}$ is homeomorphic to $V_2 \setminus V_1 \sqcup V_1'$ where $V_1 \sqcup V_1' \subset V_2$ is an unlinked union of core solid tori.
\end{enumerate}
Furthermore, the homeomorphisms $h^1_\alpha \colon \Ends(Y^1_\alpha) \to \Ends(Y^1_\alpha)$ and $h^2_\alpha \colon \Ends(Y^2_\alpha) \to \Ends(Y^2_\alpha)$ are conjugate to $f_\alpha \colon C_\alpha \to C_\alpha$.
\end{proposition}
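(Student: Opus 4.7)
The plan is to assemble the proposition from (a) the general decomposition-of-mapping-tori discussion just above the statement, (b) the detailed handle picture of $V_2\cong M_{F_\alpha}$ from the proof of Proposition~\ref{Prop:mapping torus handlebody}, and (c) a standard Alexander-trick isotopy. First I would identify the fibers topologically. Since $C_\alpha$ lies on the equator of $\mathbb S^2_\alpha$, it is disjoint from $D_{N,1/2}$ and $D_{S,1/2}$; hence $\mathbb S^2_\alpha\setminus\mathrm{int}(D_{N,1/2})$ is a closed $2$--disk with $C_\alpha$ in its interior, so $Y^1_\alpha$ is a disk minus a Cantor set, and $\mathbb S^2_\alpha\setminus\mathrm{int}(D_{N,1/2}\cup D_{S,1/2})$ is a closed annulus with $C_\alpha$ in its interior, so $Y^2_\alpha$ is an annulus minus a Cantor set. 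In both cases the natural map $\Ends(Y^i_\alpha)\to C_\alpha$ is a homeomorphism.

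Next I would identify $\mathcal D_{N,1/2}$, and likewise $\mathcal D_{1/2}=\mathcal D_{N,1/2}\sqcup\mathcal D_{S,1/2}$, inside $V_2$ as a regular neighborhood of an unknotted core, respectively of an unlinked pair of cores. This is visible in the proof of Proposition~\ref{Prop:mapping torus handlebody}: the decomposition $W_{1/2}=\mathcal D_{1/2}\cup\Delta_{1/2}$ presents $V_2$ as two disjoint solid tori with a single $1$--handle $\Delta_{1/2}$ attached, and the cores $\bar e_\nu,\bar e_\sigma$ of those solid tori, together with the core arc of $\Delta_{1/2}$, form a figure-eight spine of $V_2$. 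Thus each of $\bar e_\nu,\bar e_\sigma$ is a free basis element of $\pi_1(V_2)\cong F_2$ realized by an unknotted simple closed curve, and the cocore disk of $\Delta_{1/2}$ is a properly embedded separating disk in $V_2\setminus\mathrm{int}(\mathcal D_{1/2})$, witnessing the unlinking. Combined with the identifications $M_{h^1_\alpha}=M_{F_\alpha}\setminus\mathrm{int}(\mathcal D_{N,1/2})$ and $M_{h^2_\alpha}=M_{F_\alpha}\setminus\mathrm{int}(\mathcal D_{1/2})$ from the discussion above, this gives the required topological descriptions of the mapping tori.

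Finally I would arrange the boundary condition and the end action simultaneously. Since $F_\alpha|_{D_{N,1/2}}$ is an orientation-preserving self-homeomorphism of a disk, one can isotope $F_\alpha|_{\partial D_{N,1/2}}$ to the identity through orientation-preserving circle homeomorphisms (using connectedness of $\mathrm{Homeo}_+(\mathbb S^1)$), extend this isotopy through a small annular collar of $\partial D_{N,1/2}$ in $\Sigma_\alpha$ chosen disjoint from $C_\alpha$, and then apply the Alexander trick in the interior of $D_{N,1/2}$; repeat at $D_{S,1/2}$. The result is a homeomorphism $G_\alpha$ of $\Sigma_\alpha$ isotopic to $F_\alpha$, preserving $D_{N,1/2}$ and $D_{S,1/2}$ setwise, equal to the identity on $D_{N,1/2}\cup D_{S,1/2}$, and with support disjoint from $C_\alpha$. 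Replacing $h^i_\alpha$ by $G_\alpha|_{Y^i_\alpha}$ leaves the mapping torus unchanged up to homeomorphism and gives the required boundary-fixing monodromy, while $G_\alpha$ still agrees with $F_\alpha$ on $C_\alpha$, so the induced map on $\Ends(Y^i_\alpha)\cong C_\alpha$ is $f_\alpha|_{C_\alpha}$. The main obstacle I anticipate is the bookkeeping in this last isotopy --- arranging simultaneously that the polar disks are setwise preserved, the boundary is straightened, and the support stays off $C_\alpha$ --- but this is routine given that $C_\alpha$ is already well separated from the polar disks.
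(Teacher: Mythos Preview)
Your proposal is correct and follows essentially the same route as the paper: the paper's ``proof'' is really the paragraph immediately preceding the proposition, which (i) uses the decomposition $M_{h^i_\alpha}=M_{F_\alpha}\setminus\mathrm{int}(\mathcal D_{\cdot,1/2})$, (ii) reads off the topology from the compression-body/handle picture of $M_{F_\alpha}\cong V_2$ established in Proposition~\ref{Prop:mapping torus handlebody}, and (iii) asserts that one can isotope $F_\alpha$, preserving $D_{N,1/2}\cup D_{S,1/2}$, to be the identity there. You carry out exactly these steps, adding welcome detail the paper omits---namely the explicit collar-plus-Alexander argument for the isotopy, the observation that the isotopy has support disjoint from $C_\alpha$ so the induced end action is still $f_\alpha|_{C_\alpha}$, and the cocore-disk justification for ``unknotted/unlinked.''
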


Now for any surface with non-empty boundary $Z$ and homeomorphism $g \colon Z \to Z$ that is the identity on the boundary, we can glue on copies of $Y^1_\alpha$ to any circle boundary component of $Z$, and/or $Y^2_\alpha$ to any pair of circle boundary components of $Z$, and glue together $g$ with $h^1_\alpha$ and/or $h^2_\alpha$ on each copy of $Y^1_\alpha$ and $Y^2_\alpha$ to produce a new homeomorphism $\hat g$ on the glued surface $\hat Z$.  If $Z$ is compact, we claim that $M_{\hat g}$ is tame,  and up to homeomorphism is obtained from $M_g$ by a simple topological operation which we now describe.

First, suppose $\hat Z$ is obtained by gluing $Y_\alpha^1$ to $Z$ so that $\hat g$ is obtained by gluing $h_\alpha^1$ to $g$.  Then we obtain $M_{\hat g}$ by gluing $M_{h_\alpha^1}$ to the compact manifold-with-boundary, $M_g$ along a torus boundary component in each.  From the compression body description of $M_{h_\alpha^1} \cong V_2 - V_1$ above, we can alternatively describe $M_{\hat g}$ as obtained by first gluing $T^2 \times [0,1]$ to $M_g$, identifying $T^2 \times \{0\}$ to a torus boundary component of $M_g$, attaching a $1$--handle to the result, and then deleted the resulting genus $2$ boundary component.  Since gluing $T^2 \times [0,1]$ to $M_g$ produces a manifold homoemorphic to $M_g$, we can alternatively view $M_{\hat g}$ as simply obtained from $M_g$ by attaching a $1$--handle to a torus boundary component, then deleting the resulting genus $2$ boundary component.  In particular, $M_{\hat g}$ is indeed tame.


Similarly, for every copy of $Y^2_\alpha$ glued to $Z$, we glue a copy of $V_2 \setminus V_1 \sqcup V_1'$ to a pair of boundary tori.  Topologically, this is equivalent to adding a $1$--handle between two torus boundary components, then deleting the resulting genus $2$ boundary component.



The following Proposition, which provides a family of examples that can be derived from this construction, summarizes our previous discussion:

\begin{proposition}\label{Prop:gluing handles to 3-manifolds} For any compact, orientable 3--manifold that fibers over $\mathbb{S}^1$, with non-empty (torus) boundary with monodromy that is the identity on the boundary, then attaching a $1$--handle to every boundary torus, either connecting pairs of boundary components with a single $1$--handle or connecting a boundary torus to itself with a single $1$--handle, and then removing the resulting boundary, produces a tame $3$--manifold that fibers over the circle with fiber an infinite type surface with a Cantor set of ends.
\end{proposition}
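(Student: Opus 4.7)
The plan is to apply the building blocks from Proposition \ref{Prop:discs and annuli building blocks} directly to the fiber of the given fibration. Write the given $3$-manifold $N$ as a mapping torus $M_g$, where $g\colon Z\to Z$ is a homeomorphism of a compact surface with non-empty boundary that restricts to the identity on $\partial Z$. Because $g|_{\partial Z}=\mathrm{id}$, each boundary circle of $Z$ produces a distinct torus boundary component of $M_g$, and this correspondence is a bijection.

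Fix any $\alpha\in(0,1)\setminus\mathbb{Q}$. For each torus boundary component of $N$ slated to receive a self $1$-handle, glue a copy of $Y^1_\alpha$ to the corresponding boundary circle of $Z$, and extend $g$ over the glued surface by $h^1_\alpha$; this is well-defined because both $g$ and $h^1_\alpha$ are the identity on the glued circle. For each pair of torus boundary components of $N$ to be joined by a $1$-handle, glue a copy of $Y^2_\alpha$ to the two corresponding boundary circles of $Z$ and extend $g$ by $h^2_\alpha$ in the same way. Let $\hat Z$ denote the resulting (boundaryless) surface and $\hat g\colon \hat Z\to \hat Z$ the resulting homeomorphism. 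Since $Z$ is compact with finitely many boundary circles and each glued building block contributes a Cantor set of ends (by Proposition \ref{Prop:discs and annuli building blocks}), the end space $\mathrm{Ends}(\hat Z)$ is a finite disjoint union of Cantor sets, hence itself a Cantor set, so $\hat Z$ is an infinite type surface with a Cantor set of ends as required.

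By the gluing principle for mapping tori recalled at the beginning of Section \ref{section:examples}, $M_{\hat g}$ is obtained from $M_g$ by gluing in copies of $M_{h^1_\alpha}$ and $M_{h^2_\alpha}$ along matching torus boundary components. Each such gluing was shown in the discussion immediately preceding the statement to be equivalent to attaching a $1$-handle (from a torus to itself, or between two tori) and then deleting the resulting genus $2$ boundary component. Performing these operations for all of the prescribed $1$-handles produces exactly the $3$-manifold described in the proposition. Moreover, $M_{\hat g}$ fibers over $\mathbb{S}^1$ via the natural projection of the mapping torus, and is tame because it equals the interior of the compact $3$-manifold obtained from $N$ by performing only the $1$-handle attachments (without the subsequent boundary removals).

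The main point that requires care is the bookkeeping of the bijection between boundary circles of $Z$ and torus boundary components of $M_g$, ensuring that the surface-level gluings of $Y^1_\alpha$ and $Y^2_\alpha$ realize the prescribed $1$-handle pattern at the level of $3$-manifolds; once this matching is fixed, the $3$-dimensional identification reduces cleanly to Proposition \ref{Prop:discs and annuli building blocks} and the handle-attachment discussion preceding the statement, so no further technical work is needed.
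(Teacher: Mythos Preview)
Your proposal is correct and follows essentially the same approach as the paper: the paper explicitly states that this proposition ``summarizes our previous discussion,'' and your argument reproduces precisely that discussion---gluing copies of $Y^1_\alpha$ and $Y^2_\alpha$ to the boundary circles of the fiber, extending the monodromy by $h^1_\alpha$ and $h^2_\alpha$, and invoking Proposition~\ref{Prop:discs and annuli building blocks} together with the compression-body description to identify the resulting mapping torus with the prescribed $1$--handle attachments. Your added observations (the bijection between boundary circles and boundary tori, and that a finite disjoint union of Cantor sets is a Cantor set) are exactly the small verifications the paper leaves implicit.
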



The ideas leading to Proposition~\ref{Prop:gluing handles to 3-manifolds} can still be perturbed to provide a wider family of examples. Consider for example the case where the map $\hat{g}$ defining the mapping torus $M_{\hat{g}}$ is obtained by glueing $g$ with finitely many
maps $h^1_{\alpha,k}=h^1_\alpha$ and/or $h^2_{\alpha}=h^2_\alpha$ on copies of $Y^1_\alpha$ and $Y^2_\alpha$. One could consider now $\hat{g}'$ obtained by glueing $\hat{g}$ with finitely many $h^1_{\alpha_j}$ and/or $h^2_{\alpha_i}$ so that $\{\alpha_i,\alpha_j\}_{i\in I, j\in J}$ is a family of pairwise different elements of $(0,1)\setminus\mathbb{Q}$. The resulting mapping torus $M_{\hat{g}}$ would also be a tame fibered 3--manifold with fiber an infinite type surface with a Cantor set of ends.


\subsection{Building blocks: Sphere minus Cantor set and countable union of disks.}

For the next construction we will start again with $\Sigma_\alpha$ and this time remove a properly embedded subsurface which is a disjoint union of disks accumulating on the entire end space.  To get invariance of this subsurface, we will need to apply an isotopy of $F_\alpha$ first, however.  Before describing the isotopy, we explain roughly where the disks come from.

First, recall that $\{y_j\}_{j \in \mathbb Z}$ is an $F_\alpha$--invariant subset of the equator $\mathbb S_\alpha^1 \subset \mathbb S_\alpha^2$, consisting of points in the middle of each complementary interval of the invariant Cantor set $C_\alpha$.  Let
\[L = \bigcup_{s \in \R} \psi_s(y_0).\]
We claim that $L$ is an unknotted, properly embedded arc in $M_{F_\alpha}$.  To see this, we inspect the proof of Proposition~\ref{Prop:mapping torus handlebody}.  For all $t \in (0,1)$, we observe that the intersection $L \cap W_t$ of $L$ with the handlebody $W_t$ is contained in the $1$--handle, $\Delta_t$,
\[ L \cap W_t = L \cap \Delta_t \subset \Delta_t \cong D^2 \times [-1,1].\]
Moreover, we can choose a homeomorphism $\Delta_t \to D^2 \times [-1,1]$ sending $L \cap \Delta_t$ to an arc of the form $\delta \times \{0\}$, where $\delta \subset D^2$ is a diameter of the disk.  A properly embedded, closed tubular neighborhood of $L$ in $M_{F_\alpha}$ is then also unknotted, and removing its interior results in a space homeomorphic to the interior of $V_2$ minus the unknotted $int(D^2 \times \R)$, as illustrated in Figure~\ref{Fig:corked}.

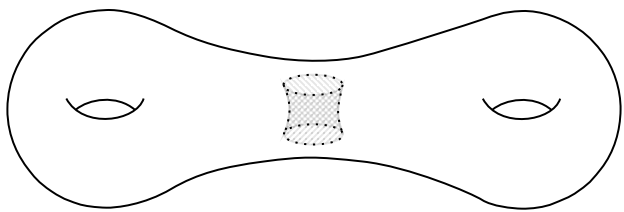
\begin{figure}[ht]
\usetikzlibrary{patterns}

\tikzset{
pattern size/.store in=\mcSize,
pattern size = 5pt,
pattern thickness/.store in=\mcThickness,
pattern thickness = 0.3pt,
pattern radius/.store in=\mcRadius,
pattern radius = 1pt}
\makeatletter
\pgfutil@ifundefined{pgf@pattern@name@_fctstrmzj}{
\pgfdeclarepatternformonly[\mcThickness,\mcSize]{_fctstrmzj}
{\pgfqpoint{0pt}{0pt}}
{\pgfpoint{\mcSize+\mcThickness}{\mcSize+\mcThickness}}
{\pgfpoint{\mcSize}{\mcSize}}
{
\pgfsetcolor{\tikz@pattern@color}
\pgfsetlinewidth{\mcThickness}
\pgfpathmoveto{\pgfqpoint{0pt}{0pt}}
\pgfpathlineto{\pgfpoint{\mcSize+\mcThickness}{\mcSize+\mcThickness}}
\pgfusepath{stroke}
}}
\makeatother


\tikzset{
pattern size/.store in=\mcSize,
pattern size = 5pt,
pattern thickness/.store in=\mcThickness,
pattern thickness = 0.3pt,
pattern radius/.store in=\mcRadius,
pattern radius = 1pt}
\makeatletter
\pgfutil@ifundefined{pgf@pattern@name@_xgt741hss}{
\pgfdeclarepatternformonly[\mcThickness,\mcSize]{_xgt741hss}
{\pgfqpoint{0pt}{-\mcThickness}}
{\pgfpoint{\mcSize}{\mcSize}}
{\pgfpoint{\mcSize}{\mcSize}}
{
\pgfsetcolor{\tikz@pattern@color}
\pgfsetlinewidth{\mcThickness}
\pgfpathmoveto{\pgfqpoint{0pt}{\mcSize}}
\pgfpathlineto{\pgfpoint{\mcSize+\mcThickness}{-\mcThickness}}
\pgfusepath{stroke}
}}
\makeatother
\tikzset{every picture/.style={line width=0.75pt}} 

\begin{tikzpicture}[x=0.75pt,y=0.75pt,yscale=-1,xscale=1]

\draw  [color={rgb, 255:red, 0; green, 0; blue, 0 }  ,draw opacity=1 ][dash pattern={on 0.84pt off 2.51pt}] (178.25,95.75) .. controls (178.25,92.93) and (184.92,90.65) .. (193.15,90.65) .. controls (201.38,90.65) and (208.05,92.93) .. (208.05,95.75) .. controls (208.05,98.57) and (201.38,100.85) .. (193.15,100.85) .. controls (184.92,100.85) and (178.25,98.57) .. (178.25,95.75) -- cycle ;
\draw  [draw opacity=0.1][pattern=_fctstrmzj,pattern size=2.25pt,pattern thickness=0.75pt,pattern radius=0pt, pattern color={rgb, 255:red, 0; green, 0; blue, 0}] (207.25,99.46) -- (206.67,102.04) -- (206.17,104.46) -- (205.92,106.71) -- (205.75,109.46) -- (205.92,111.96) -- (206.33,114.71) -- (207.33,118.04) -- (208.25,120.75) -- (207.17,122.71) -- (205.08,123.88) -- (203,124.63) -- (200,125.38) -- (197.17,125.63) -- (194.17,125.88) -- (191.58,125.88) -- (188.25,125.54) -- (184.67,124.96) -- (181.42,123.96) -- (179.08,122.13) -- (178.45,120.75) -- (179.83,116.63) -- (180.67,113.38) -- (181.33,108.88) -- (181,104.46) -- (180.08,100.63) -- (178.25,95.75) -- (179.92,98.13) -- (182.08,99.29) -- (184.67,100.04) -- (187.17,100.38) -- (190.75,100.88) -- (194.42,100.88) -- (197.92,100.63) -- (201.08,100.04) -- (204.5,98.96) -- (206.83,97.79) -- (208.05,95.75) -- cycle ;
\draw [color={rgb, 255:red, 0; green, 0; blue, 0 }  ,draw opacity=1 ] [dash pattern={on 0.84pt off 2.51pt}]  (208.05,95.75) .. controls (205.85,106.45) and (204.25,109.65) .. (208.25,120.75) ;
\draw  [color={rgb, 255:red, 0; green, 0; blue, 0 }  ,draw opacity=1 ][dash pattern={on 0.84pt off 2.51pt}] (178.45,120.75) .. controls (178.45,117.93) and (185.12,115.65) .. (193.35,115.65) .. controls (201.58,115.65) and (208.25,117.93) .. (208.25,120.75) .. controls (208.25,123.57) and (201.58,125.85) .. (193.35,125.85) .. controls (185.12,125.85) and (178.45,123.57) .. (178.45,120.75) -- cycle ;
\draw   (74.56,60.13) .. controls (79.48,58.75) and (82.58,58.33) .. (88.88,58) .. controls (95.17,57.67) and (105.89,59.5) .. (120.92,66.97) .. controls (135.94,74.44) and (149.04,77.75) .. (168.25,81.25) .. controls (187.46,84.75) and (206.25,84.02) .. (218.25,81.25) .. controls (230.25,78.48) and (272.25,64.9) .. (278.56,62.75) .. controls (284.87,60.6) and (288.28,58.92) .. (298.38,58.5) .. controls (308.47,58.08) and (318.73,62.49) .. (323.79,65.52) .. controls (328.84,68.55) and (333.02,71.52) .. (338.87,79.21) .. controls (344.71,86.9) and (348.4,98) .. (348.25,108.38) .. controls (348.1,118.75) and (345.17,129.52) .. (338.4,138.13) .. controls (331.63,146.75) and (331.02,146.29) .. (326.71,149.52) .. controls (322.4,152.75) and (318.25,153.98) .. (314.16,155.73) .. controls (310.08,157.48) and (303.25,158.37) .. (298.38,158.25) .. controls (293.5,158.13) and (283.48,157.06) .. (278.4,153.83) .. controls (273.33,150.6) and (241.94,137.07) .. (218.25,134.45) .. controls (194.56,131.83) and (188.16,131.8) .. (168.25,134.45) .. controls (148.34,137.1) and (135.14,140.31) .. (121.58,148.31) .. controls (108.03,156.31) and (93.94,157.86) .. (88.88,157.75) .. controls (83.81,157.64) and (77.63,156.9) .. (74.71,155.83) .. controls (71.79,154.75) and (68.1,154.13) .. (61.25,149.5) .. controls (54.4,144.87) and (52.3,141.98) .. (49.02,137.67) .. controls (45.74,133.37) and (38.87,122.9) .. (39,107.88) .. controls (39.13,92.85) and (45.6,82.69) .. (49.17,77.67) .. controls (52.75,72.65) and (56.84,69.67) .. (61.25,66.5) .. controls (65.66,63.33) and (69.63,61.52) .. (74.56,60.13) -- cycle ;
\draw    (68.72,102.65) .. controls (77.05,118.32) and (104.05,114.32) .. (107.72,102.65) ;
\draw    (73.58,108.22) .. controls (79.98,102.88) and (94.12,100.48) .. (103.32,108.35) ;

\draw    (278.78,102.72) .. controls (287.12,118.38) and (314.12,114.38) .. (317.78,102.72) ;
\draw    (283.65,108.28) .. controls (290.05,102.95) and (304.18,100.55) .. (313.38,108.42) ;

\draw [color={rgb, 255:red, 0; green, 0; blue, 0 }  ,draw opacity=1 ] [dash pattern={on 0.84pt off 2.51pt}]  (178.25,95.75) .. controls (181.85,103.85) and (182.65,110.25) .. (178.45,120.75) ;
\draw  [draw opacity=0.1][pattern=_xgt741hss,pattern size=2.25pt,pattern thickness=0.75pt,pattern radius=0pt, pattern color={rgb, 255:red, 0; green, 0; blue, 0}] (191,90.63) -- (194.25,90.63) -- (200,91.13) -- (203.92,92.21) -- (207.17,93.79) -- (208.05,95.75) -- (207.25,99.46) -- (206.67,102.04) -- (206.17,104.46) -- (205.92,106.71) -- (205.75,109.46) -- (205.92,111.54) -- (206.25,113.71) -- (206.83,116.04) -- (207.33,118.04) -- (208.25,120.75) -- (207.58,119.04) -- (205.5,117.79) -- (203,116.79) -- (200.08,116.04) -- (197.25,115.79) -- (193.35,115.65) -- (189.75,115.79) -- (186.08,116.21) -- (182.67,117.04) -- (180.17,118.29) -- (178.45,120.75) -- (179.08,119.21) -- (180.17,116.29) -- (180.67,113.38) -- (181.25,109.79) -- (181.25,107.13) -- (181,104.46) -- (180.08,100.63) -- (179.08,97.79) -- (178.25,95.75) -- (179.5,93.63) -- (182.67,92.21) -- (186.17,91.13) -- cycle ;

\end{tikzpicture}

\caption{The open handlebody $M_{F_\alpha}$ minus a tubular neighborhood of $L$.  The tubular neighborhood is the interior of the mapping torus of $F_\alpha'$ restricted to $Z$.}
\label{Fig:corked}
\end{figure}

For any $t \in (0,1)$ and $j \in \Z$, consider the disk
\[ \Omega_{j,t} = B_{j,1/2} \setminus int(D_{N,t} \cup D_{S,t}) = \pi_\alpha\left( \tfrac12 I_j \times [t-1,1-t]\right). \]
See Figure~\ref{Fig:shrinking disks}.
\begin{center}
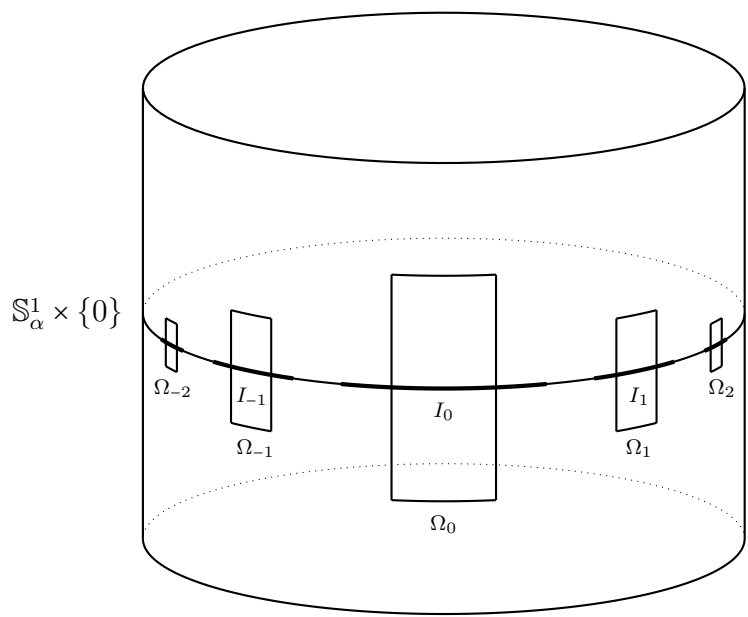
\begin{figure}[ht]
\begin{tikzpicture}[scale=1]
	\draw [thick,samples=100,domain=0:360] plot ({4*cos(\x)},{sin(\x)+3});
	\draw [thick,samples=100,domain=180:360] plot ({4*cos(\x)},{sin(\x)-3});
	\draw [dotted,samples=100,domain=0:180] plot ({4*cos(\x)},{sin(\x)-3});
	\draw [dotted,samples=100,domain=0:180] plot ({4*cos(\x)}, {sin(\x)});
	\draw [thick,samples=100,domain=180:360] plot ({4*cos(\x)}, {sin(\x)});
	\draw [thick] (-4,-3) -- (-4,3);
	\draw [thick] (4,-3) -- (4,3);
	\draw [ultra thick,domain=200:210] plot ({4*cos(\x)},{sin(\x)});
	\draw [ultra thick,domain=220:240] plot ({4*cos(\x)},{sin(\x)});
	\draw [ultra thick,domain=250:290] plot ({4*cos(\x)},{sin(\x)});
	\draw [ultra thick,domain=300:320] plot ({4*cos(\x)},{sin(\x)});
	\draw [ultra thick,domain=330:340] plot ({4*cos(\x)},{sin(\x)});
	\draw [thick,domain=202.5:207.5] plot ({4*cos(\x)},{sin(\x)+.32});
	\draw [thick,domain=202.5:207.5] plot ({4*cos(\x)},{sin(\x)-.32});
	\draw [thick] ({4*cos(202.5)},{sin(202.5)+.32}) -- ({4*cos(202.5)},{sin(202.5)-.32}) ;
	\draw [thick] ({4*cos(207.5)},{sin(207.5)+.32}) -- ({4*cos(207.5)},{sin(207.5)-.32}) ;
	\draw [thick,domain=225:235] plot ({4*cos(\x)},{sin(\x)+.75});
	\draw [thick,domain=225:235] plot ({4*cos(\x)},{sin(\x)-.75});
	\draw [thick] ({4*cos(225)},{sin(225)+.75}) -- ({4*cos(225)},{sin(225)-.75}) ;
	\draw [thick] ({4*cos(235)},{sin(235)+.75}) -- ({4*cos(235)},{sin(235)-.75}) ;
	\draw [thick,domain=260:280] plot ({4*cos(\x)},{sin(\x)+1.5});
	\draw [thick,domain=260:280] plot ({4*cos(\x)},{sin(\x)-1.5});
	\draw [thick] ({4*cos(260)},{sin(260)+1.5}) -- ({4*cos(260)},{sin(260)-1.5}) ;
	\draw [thick] ({4*cos(280)},{sin(280)+1.5}) -- ({4*cos(280)},{sin(280)-1.5}) ;
	\draw [thick,domain=305:315] plot ({4*cos(\x)},{sin(\x)+.75});
	\draw [thick,domain=305:315] plot ({4*cos(\x)},{sin(\x)-.75});
	\draw [thick] ({4*cos(305)},{sin(305)+.75}) -- ({4*cos(305)},{sin(305)-.75}) ;
	\draw [thick] ({4*cos(315)},{sin(315)+.75}) -- ({4*cos(315)},{sin(315)-.75}) ;
	\draw [thick,domain=332.5:337.5] plot ({4*cos(\x)},{sin(\x)+.32});
	\draw [thick,domain=332.5:337.5] plot ({4*cos(\x)},{sin(\x)-.32});
	\draw [thick] ({4*cos(332.5)},{sin(332.5)+.32}) -- ({4*cos(332.5)},{sin(332.5)-.32}) ;
	\draw [thick] ({4*cos(337.5)},{sin(337.5)+.32}) -- ({4*cos(337.5)},{sin(337.5)-.32}) ;
	\node at (-2.55,-1.1) {\tiny $I_{-1}$};
	\node at (0,-1.3) {\tiny $I_0$};
	\node at (2.6,-1.1) {\tiny $I_1$};
	\node at (-3.6,-1) {\tiny $\Omega_{-2}$};
	\node at (-2.5,-1.8) {\tiny $\Omega_{-1}$};
	\node at (0,-2.8) {\tiny $\Omega_{0}$};
	\node at (2.6,-1.8) {\tiny $\Omega_{1}$};
	\node at (3.7,-1) {\tiny $\Omega_2$};
	\node at (-5,0) {$\mathbb S^1_\alpha \times \{0\}$};
\end{tikzpicture}
\caption{The product $\mathbb S^1_\alpha \times [-1,1]$ that projects by $\pi_\alpha$ to $\mathbb S^2_\alpha$, together with a few rectangles that project to $\Omega_{-2} = \Omega_{-2,7/8},\Omega_{-1} = \Omega_{-1,3/4},\Omega_{0} = \Omega_{0,1/2},\Omega_{1} = \Omega_{1,3/4},\Omega_2 =\Omega_{2,7/8}$.}
\label{Fig:shrinking disks}
\end{figure}
\end{center}

There is an $F_\alpha$--invariant, disjoint union of disks given by
\[ \bigcup_{j \in \mathbb Z} \Omega_{j,t},\]
for any $t \in (0,1)$. We remark that $\bigcup_{j \in \mathbb Z} \Omega_{j,t}$  has accumulation points that do not belong to this set, and thus this union of disks is not properly embedded in $\Sigma_\alpha$. What is more, \emph{there is no $F_\alpha$--invariant, properly embedded surface in $\Sigma_\alpha$ that is homeomorphic to an infinite disjoint union of disks}. We leave the verification of this fact to the reader who is curious why we bother with the construction described over the next few pages.

Because of this phenomenon if we want to find a properly embedded and invariant surface which is homeomorphic to an infinite disjoint union of disks there is no option but to isotope $F_\alpha$ to a homeomorphism $F_\alpha'$ with the desired properties. The construction of this isotopy is rather technical and we perform it in what follows. After, we use it to give a proof of Theorem~\ref{thm:main2}.


 The desired isotopy is obtained as the composition of $F_\alpha$ with an infinite sequence of commuting isotopies $H^j_t \colon \Sigma_\alpha \to \Sigma_\alpha$ of the identity, each supported on one of the disks in the properly embedded subsurface
\[ \Omega \, \, = \, \, \bigcup_{j \in \mathbb Z} \Omega_j \, \, \subset \, \, \Sigma_\alpha, \]
where $\displaystyle{\Omega_j = \Omega_{j,(1-2^{-(|j|+1)})}}$.  More precisely, we will chose a disk $D \subset int(\Omega_0)$ and recursively construct isotopies $H^j_t \colon \Sigma_\alpha \to \Sigma_\alpha$, $t \in [0,1]$, such that
\begin{enumerate}
\item $H^j_0 = \mbox{id}_{\Sigma_\alpha}$ for all $j \in \mathbb Z$,
\item $H^j_t$ is supported on $\Omega_j$ for all $j \in \mathbb Z$; that is, $H^j_t = \mbox{id}$ outside $\Omega_j$, for all $t \in [0,1]$,
\item $H^j_t(y_j) = y_j$ for all $t \in [0,1]$ and $j \in \mathbb Z$,
\item $\mathcal H_t^k = H^{-k}_t \circ H^{-(k-1)}_t \circ \cdots \circ H^{k-1}_t \circ H^k_t$ converges locally uniformly to an isotopy $\mathcal H_t$ as $k \to \infty$ with $\mathcal H_0 = \mbox{id}_{\Sigma_\alpha}$.
\end{enumerate}
Furthermore, $F_\alpha' = \mathcal H_1 \circ F_\alpha$ will have the property that
\[ Z = \bigcup_{j \in \mathbb Z} (F_\alpha')^j(D) \subset \Omega \subset \Sigma_\alpha\]
is a properly embedded, $F_\alpha'$--invariant subsurface $Z \subset \Sigma_\alpha$ homeomorphic to a disjoint union of disks.\\

\noindent {\bf Construction:} To start, we will construct the isotopies $H^j_t$ for $j \geq 1$ using $F_\alpha$.  After that, we define $H^j_t$ for $j \leq 0$ by an essentially identical construction, but using $F_\alpha^{-1}$ instead.

We start by choosing any disk $D = D_0 \subset \Omega_0$ centered on $y_0$ so that $F_\alpha(D) \subset \Omega_1$.  Define $H^1_t$ to be the identity outside $\Omega_1$, so that
\begin{enumerate}
\item $H^1_0$ is the identity on $\Sigma_\alpha$,
\item $H^1_t$ fixes $y_1$ for all $t$, and
\item $F_\alpha(H^1_1(F_\alpha(D_0))) \subset int(\Omega_2)$.
\end{enumerate}
We can easily construct such an isotopy $H^1_t$ by sufficiently contracting everything in $int(\Omega_1)$ toward $y_1$ as $t$ varies from $0$ to $1$.  Set $D_1 = H^1_1(F_\alpha(D_0))$ and note that by condition (3) we have $F_\alpha(D_1) \subset int(\Omega_2)$.

Now inductively assume that for some $j \geq 1$, an isotopy $H^j_t$ supported on $\Omega_j$ has been defined, as has a disk $D_j \subset int(\Omega_j)$ centered $y_j$, such that
\begin{enumerate}
\item $H^j_0$ is the identity on $\Sigma_\alpha$,
\item $H^j_t$ fixes $y_j$ for all $t$, and
\item $F_\alpha(D_j) \subset int(\Omega_{j+1})$.
\end{enumerate}
The construction of $H_t^{j+1}$ is similar to the case $j=1$: contract everything in $int(\Omega_{j+1})$ toward $y_{j+1}$ as $t$ varies from $0$ to $1$ by a sufficient amount to ensure $F_\alpha(H^{j+1}_1(D_j)) \subset int(\Omega_{j+2})$.  We then define $D_{j+1} = H^{j+1}_1(F_\alpha(D_j))$.

The recursive procedure defines $H^j_t$ and disks $D_j$ for all $j \geq 1$, satisfying $H^{j+1}_1 F_\alpha(D_j) = D_{j+1}$ for all $j \geq 0$.  Furthermore, every $H^j_0$ is the identity, $H^j_t$ is the identity outside $\Omega_j$, and since the union of disks $\displaystyle{\bigcup_{j \geq 1} \Omega_j }$ is properly embedded, the limit of the composition of the isotopies
\[ \mathcal H_t^+ = \lim_{j\to \infty} H^1_t \circ H^2_t \circ \cdots \circ H^j_t,\]
exists.  Furthermore, for all $j \geq 1$ and $t \in [0,1]$ we have $\mathcal H^+_t|_{\Omega_j} = H^{j}_t|_{\Omega_j}$ and hence
\begin{eqnarray*}
(\mathcal H^+_1 F_\alpha)^j(D_0) & = & \left( \mathcal H^+_1 F_\alpha \right) \circ \left( \mathcal H^+_1 F_\alpha \right) \circ \cdots \circ \left( \mathcal H^+_1 F_\alpha \right) \circ \left( \mathcal H^+_1 F_\alpha \right)\circ \left( \mathcal H^+_1 F_\alpha \right)(D_0)\\
& = & \left( \mathcal H^+_1 F_\alpha \right) \circ \left( \mathcal H^+_1 F_\alpha \right) \circ \cdots \circ \left( \mathcal H^+_1 F_\alpha \right)\circ \left( \mathcal H^+_1 F_\alpha \right) \circ \left( H^1_1 F_\alpha \right)(D_0)\\
& = & \left( \mathcal H^+_1 F_\alpha \right) \circ \left( \mathcal H^+_1 F_\alpha \right) \circ \cdots \circ \left( \mathcal H^+_1 F_\alpha \right)\circ \left( \mathcal H^+_1 F_\alpha \right)(D_1)\\
& = & \left( \mathcal H^+_1 F_\alpha \right) \circ \left( \mathcal H^+_1 F_\alpha \right) \circ \cdots \circ \left( \mathcal H^+_1 F_\alpha \right)\circ \left(H^2_1 F_\alpha \right)(D_1)\\
& = & \left( \mathcal H^+_1 F_\alpha \right) \circ \left( \mathcal H^+_1 F_\alpha \right) \circ \cdots \circ \left( \mathcal H^+_1 F_\alpha \right)(D_2)\\
& = & \cdots \,\, = \,\, D_j.
\end{eqnarray*}

We carry out an identical construction of isotopies of the identity $G^{|j|}_t$ supported on $\Omega_j$, for $j \leq -1$, using $F_\alpha^{-1}$ and $D_0$ in place of $F_\alpha$ and $D_0$ (observe that since $F_\alpha(D_0) \subset \Omega_1$, we also have $F_\alpha^{-1}(D_0) \subset \Omega_{-1}$).  Then $D_j$ is defined by
\[ D_j = G^{|j|}_1 F_\alpha^{-1}(D_{j+1}) \]
for all $j \leq -1$.  Finally, for all $j \leq 0$, we set
\[ H^j_t  = F_\alpha (G^{|j|+1}_t)^{-1} F_\alpha^{-1}.\]
Observe that for all $j \leq 0$, $H^j_t$ is an isotopy of the identity supported on $F_\alpha(\Omega_{j-1}) \subset \Omega_j$.  Furthermore, for all $j \leq -1$ we have
\[ H^{j+1}_1 F_\alpha(D_j) = H^{j+1}_1 F_\alpha (G^{|j|}_1 \circ F_\alpha^{-1}(D_{j+1})) = (F_\alpha (G^{|j|}_1)^{-1}F_\alpha^{-1})F_\alpha(G^{|j|}_1 F_\alpha^{-1}(D_{j+1})) = D_{j+1}.\]
We can extract a limit
\[ \mathcal H_t^- = \lim_{j\to -\infty} H^0_t \circ H^{-1}_t \circ \cdots \circ H^j_t,\]
which is supported on $\displaystyle{\bigcup_{j \leq 0} \Omega_j}$ and has $\mathcal H_1^-\circ F_\alpha(D_j) = D_{j+1}$ for all $j \leq -1$.  The composition $\mathcal H_t = \mathcal H_t^+ \circ \mathcal H_t^-$ is the required isotopy, we set $F_\alpha' = \mathcal H_1\circ F_\alpha$, the isotopy of $F_\alpha$, and let
\[ Z = \bigcup_{j \in \mathbb Z} D_j \]
be the properly embedded, $F_\alpha'$--invariant subsurface which is an infinite, disjoint union of disks.  Observe that, $Z$ accumulates on the entire Cantor set of ends, $C_\alpha$. Moreover $C_\alpha\cup Z$ is the closure of $Z$ in the sphere $\mathbb{S}^2_\alpha$.\\

\textbf{Proof of Theorem~\ref{thm:main2}.} Observe that $V_2 \cong M_{F_\alpha} \cong M_{F_\alpha'}$.  Furthermore, because each $y_j$ was fixed throughout the isotopy $\mathcal H_t$, the homeomorphism from $M_{F_\alpha}$ to $M_{F_\alpha'}$ sends $L$ ``to itself", or more precisely to the suspension flowline through $y_0$,
\[ L' =  \bigcup_{s \in \mathbb R} \psi_s'(y_0) \subset M_{F_\alpha'}.\]
Furthemore, the suspension flow on $D_0$ produces a closed, properly embedded, tubular neighborhood of $L$,
\[ N(L') = \bigcup_{s \in \mathbb R} \psi_s'(D_0) \subset M_{F_\alpha'}.\]
Thus, $M_{F_\alpha'} \setminus int(N(L'))$ is homeomorphic to the manifold shown in Figure~\ref{Fig:corked}.
Furthermore, setting $Y_\alpha = \Sigma_\alpha \setminus int(Z)$ and $F^Y_\alpha = F_\alpha'|_{Y_\alpha} \colon Y_\alpha \to Y_\alpha$, we have
\[ M_{F^Y_\alpha} \cong M_{F_\alpha'} \setminus int(N(L)),\]
which is thus also homeomorphic to the manifold in Figure~\ref{Fig:corked}.

Next, observe that $\partial Y_\alpha$ is an infinite disjoint union of circles, indexed by $\mathbb Z$, and the homeomorphism $F^Y_\alpha$ restricted to $\partial Y_\alpha$ simply shifts one circle to the next.  Now let
\[ X = \bigsqcup_{j \in \mathbb Z} X_j \]
where each $X_j$ is homeomorphic to $S_{1,1}$, a genus $1$ surface with one boundary component, we can take $h \colon X \to X$ sending $X_j$ to $X_{j+1}$ ``by the identity" (any homeomorphism will do, as long as it shifts components by $1$), we can construct a homeomorphism $\varphi \colon \partial Y_\alpha \to \partial X$ conjugating $F^Y_\alpha|_{\partial Y_\alpha}$ to $h|_{\partial X}$.  Then
\[ S_\alpha = Y_\alpha \cup_\varphi X \]
is a surface with infinite genus, all ends accumulated by genus, and $\Ends(S) \cong C_\alpha$.  Therefore, $S_\alpha$ is homeomorphic to the blooming Cantor tree, again appealing to the classification of surfaces.  Moreover, the homeomorphisms $F^Y_\alpha$ and $h$ glue together to give a homeomorphism
\[ G_\alpha \colon S_\alpha \to S_\alpha \]
whose action on $\Ends(S)$ is conjugate to the restriction of $f_\alpha$ to $C_\alpha$.

Finally, the mapping torus of $G_\alpha$ is obtained by gluing $M_{F^Y_\alpha}$ to $M_h$,
\[ M_{G_\alpha} \cong M_{F^Y_\alpha} \cup M_h.\]
We know $M_{F^Y_\alpha}$ is homeomorphic to $V_2$ minus the interior of a closed tubular neighborhood of an unknotted, properly embedded arc (as in Figure~\ref{Fig:corked}), while $M_h \cong S_{1,1} \times \mathbb R$.  However, $S_{1,1}$ is a disk (2-dimensional 0-handle) with two 2-dimensional 1-handles attached.  Then we take the product with $[-1,1]$ to obtain a 3-ball (3-dimensional 0-handle) with two 3-dimensional 1-handles attached.  Up to homeomorphism, we can get $S_{1,1} \times \R$ from $S_{1,1} \times [-1,1]$ by removing $S_{1,1} \times \{-1,1\}$: what is left of the boundary is $\partial S_{1,1} \times \R$, which is an open annulus in the boundary.
See Figure~\ref{Fig:S11xR}.
Gluing $M_{F^Y_\alpha}$ to $M_h$ thus amounts to gluing two $1$--handles to $M_h$, and removing the boundary (i.e.~taking the interior); or equivalently, adding two $1$--handles to $V_2$, then removing the boundary.  This is precisely $V_4$, the interior of a genus $4$ handlebody, and thus
\[ M_{G_\alpha} \cong V_4.\]

\begin{center}
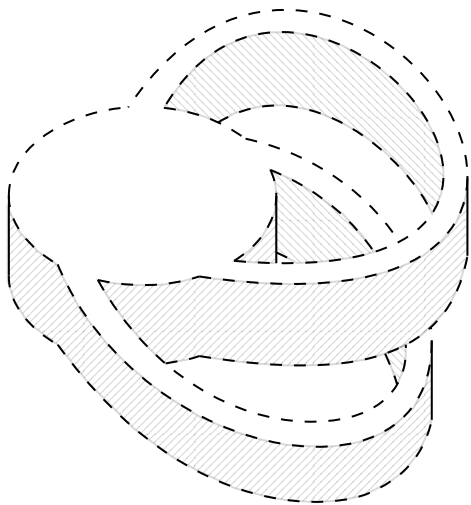
\begin{figure}[ht]
\vspace{-30mm}
\usetikzlibrary{patterns}
\tikzset{every picture/.style={line width=0.75pt}} 
\begin{tikzpicture}[x=0.75pt,y=0.75pt,yscale=-0.5,xscale=0.5]
\path (-280,588); 
\draw [color={rgb, 255:red, 0; green, 0; blue, 0 }  ,draw opacity=1 ]   (39.25,248) -- (39.25,328.38) ;
\draw [color={rgb, 255:red, 0; green, 0; blue, 0 }  ,draw opacity=1 ] [dash pattern={on 4.5pt off 4.5pt}]  (88.5,398.16) .. controls (223.5,582.75) and (370,562.25) .. (402,548.25) .. controls (434,534.25) and (456.5,513.75) .. (465.83,474.66) ;
\draw  [dash pattern={on 4.5pt off 4.5pt}]  (39.25,328.38) .. controls (38.57,356.57) and (60,382.1) .. (88.5,398.16) ;
\draw [color={rgb, 255:red, 0; green, 0; blue, 0 }  ,draw opacity=1 ]   (465.83,393.62) -- (465.83,473.99) ;
\draw  [dash pattern={on 4.5pt off 4.5pt}]  (88.5,317.12) .. controls (173.67,522.5) and (482.67,573.83) .. (465,380.08) ;
\draw  [dash pattern={on 4.5pt off 4.5pt}]  (39.25,247.33) .. controls (38.57,275.53) and (60,301.06) .. (88.5,317.12) ;
\draw [color={rgb, 255:red, 0; green, 0; blue, 0 }  ,draw opacity=1 ]   (309,248) -- (309,315.86) ;
\draw [color={rgb, 255:red, 0; green, 0; blue, 0 }  ,draw opacity=1 ] [dash pattern={on 4.5pt off 4.5pt}]  (266.44,313.56) .. controls (281.56,314.87) and (295.73,315.62) .. (309,315.86) .. controls (684.43,322.63) and (335.72,-81.5) .. (195.78,159.11) ;
\draw [color={rgb, 255:red, 0; green, 0; blue, 0 }  ,draw opacity=1 ] [dash pattern={on 4.5pt off 4.5pt}]  (231.33,329.33) .. controls (769.33,408) and (374.67,-161.33) .. (159.56,158.67) ;
\draw [color={rgb, 255:red, 0; green, 0; blue, 0 }  ,draw opacity=1 ]   (501.8,228) -- (501.8,308.38) ;
\draw [color={rgb, 255:red, 0; green, 0; blue, 0 }  ,draw opacity=1 ] [dash pattern={on 4.5pt off 4.5pt}]  (231.33,409.71) .. controls (334.8,425.6) and (479.6,437.2) .. (501.8,308.38) ;
\draw  [dash pattern={on 4.5pt off 4.5pt}]  (129,332.92) .. controls (184.14,430.94) and (268.97,473.4) .. (337.33,475.83) .. controls (405.69,478.26) and (442.67,446.83) .. (439.6,398.3) ;
\draw  [dash pattern={on 4.5pt off 4.5pt}]  (418.8,407.3) .. controls (423.8,419.3) and (426,421.7) .. (430.8,438.1) ;
\draw  [dash pattern={on 4.5pt off 4.5pt}]  (309,305.08) .. controls (316.5,308.58) and (320.67,310.25) .. (329.83,315.75) ;
\draw  [dash pattern={on 4.5pt off 4.5pt}]  (303,221.75) .. controls (357.83,243.92) and (388.83,276.92) .. (407.65,304.59) ;
\draw  [dash pattern={on 4.5pt off 4.5pt}]  (303,221.75) .. controls (306.33,228.42) and (309.17,237.92) .. (309,248) ;
\draw  [dash pattern={on 4.5pt off 4.5pt}]  (278,190.29) .. controls (335.5,202.75) and (395.33,236.58) .. (433,294.25) ;
\draw  [dash pattern={on 4.5pt off 4.5pt}]  (250,174) .. controls (340.67,118.22) and (441.78,211.11) .. (466.22,266) ;
\draw  [dash pattern={on 4.5pt off 4.5pt}]  (197.43,417.14) .. controls (214.29,413.71) and (213.14,415.71) .. (231.33,409.71) ;
\draw  [dash pattern={on 4.5pt off 4.5pt}]  (129,332.92) .. controls (179.78,343.78) and (213.11,334.89) .. (231.33,329.33) ;
\draw  [dash pattern={on 4.5pt off 4.5pt}]  (266.44,313.56) .. controls (305.4,289.3) and (309.2,258.9) .. (309,248) ;
\draw  [dash pattern={on 4.5pt off 4.5pt}]  (195.78,159.11) .. controls (242.17,164.75) and (263.17,180.25) .. (278,190.29) ;
\draw  [dash pattern={on 4.5pt off 4.5pt}]  (39.25,248) .. controls (39.11,192.44) and (109.56,161.33) .. (159.56,158.67) ;
\draw  [draw opacity=0.15][pattern={north east lines},pattern color={black}] (50,284.4) -- (68,303.6) -- (86.4,316) -- (112.8,365.6) -- (138.8,398) -- (158.4,417.6) -- (182.8,438.4) -- (200,451.2) -- (233.6,472) -- (267.2,486) -- (293.6,494) -- (318.4,499.2) -- (345.6,500.8) -- (361.2,501.2) -- (378.4,499.6) -- (400.4,494.4) -- (422.8,484) -- (447.2,462.4) -- (460,438) -- (466,408) -- (465.83,473.99) -- (462.8,484.8) -- (452.8,506.8) -- (437.6,526.4) -- (422,537.6) -- (396,550.8) -- (372.4,555.6) -- (348.4,557.2) -- (316.8,554.8) -- (298,551.2) -- (267.6,543.2) -- (225.6,523.2) -- (204.8,512) -- (166,483.5) -- (118.8,436) -- (88.5,398.16) -- (58.4,375.8) -- (44.4,353.2) -- (38,333.5) -- (38,259.5) -- cycle ;
\draw  [draw opacity=0.15][pattern={north east lines},pattern color={black}] (309,248) -- (309,315.86) -- (290.25,315.38) -- (266.44,313.56) -- (281.75,303.38) -- (293.75,290.63) -- (303.25,274.63) -- (307.25,265.63) -- cycle ;
\draw  [draw opacity=0.15][pattern={north west lines},pattern color={black}] (308.25,237.88) -- (306.5,230.13) -- (303,221.75) -- (327.75,232.63) -- (347.75,245.63) -- (359.74,254.32) -- (376,266.13) -- (395.25,287.13) -- (407.65,304.59) -- (396,308.13) -- (380,311.13) -- (353.25,314.63) -- (329.83,315.75) -- (321,310.88) -- (309,305.08) -- (309,248) -- cycle ;
\draw  [draw opacity=0.15][pattern={north west lines},pattern color={black}] (418.8,407.3) -- (439.6,398.3) -- (439.43,412) -- (437.14,424.29) -- (430.8,438.1) -- (424.29,418.86) -- cycle ;
\draw  [draw opacity=0.15][pattern={north east lines},pattern color={black}] (140.29,335.43) -- (158.29,337.71) -- (181.43,338.29) -- (195.43,337.43) -- (214,334.29) -- (231.33,329.33) -- (237.43,330.57) -- (261.43,333.14) -- (273.43,334.29) -- (294,336) -- (324.86,337.14) -- (352.86,336.29) -- (368.57,334.57) -- (385.43,332.29) -- (392.29,331.14) -- (404.86,328.57) -- (422.86,324) -- (442,316.29) -- (457.14,308.29) -- (476,293.14) -- (490.29,275.14) -- (498.22,256.22) -- (500.89,244.89) -- (501.8,228) -- (501.78,310.22) -- (492.57,340) -- (481.71,361.71) -- (465,380.08) -- (452.57,390.86) -- (439.6,398.3) -- (418.8,407.3) -- (399.43,413.14) -- (386.57,415.14) -- (369.71,417.71) -- (354.86,418.86) -- (318.86,419.14) -- (294.86,417.71) -- (272,415.43) -- (249.14,412.29) -- (237.43,410.57) -- (231.33,409.71) -- (217.14,413.71) -- (204,415.43) -- (197.43,417.14) -- (186.57,407.71) -- (175.43,396.29) -- (157.71,376) -- (140,351.43) -- (129,332.92) -- cycle ;
\draw  [draw opacity=0.15][pattern={north west lines},pattern color={black}] (460.29,170.57) -- (471.14,192.57) -- (477.43,217.71) -- (477.71,233.71) -- (475.2,246.4) -- (466.22,266) -- (459.43,252) -- (449.43,236.86) -- (435.71,220.57) -- (417.14,202.29) -- (396.57,186.57) -- (374.86,174) -- (346.29,161.71) -- (317.43,156.57) -- (298.57,156.57) -- (281.43,160.29) -- (267.14,164.29) -- (257.14,170.29) -- (250,174) -- (236,167.71) -- (216.57,162.29) -- (202.29,159.71) -- (195.78,159.11) -- (202.29,148.86) -- (212.57,135.14) -- (225.43,120.29) -- (238.29,108.86) -- (256,97.14) -- (278.86,87.43) -- (300.86,82.86) -- (322.57,82.86) -- (346.57,86) -- (375.71,96.29) -- (405.71,113.14) -- (428.57,132) -- (445.14,149.14) -- cycle ;
\end{tikzpicture}
\caption{The surface $S_{1,1} \times \mathbb R$. The shaded region is the open annular boundary.}
\label{Fig:S11xR}
\end{figure}
\end{center}


Since the action of $G_\alpha$ on $\Ends(Y_\alpha)$ is conjugate to $f_\alpha|_{C_\alpha}$, the discussion above and Corollary~\ref{cor:general conjugate equivalent} proves our second theorem from the introduction, precisely as in the proof of Theorem~\ref{thm:main1} at the beginning of \S\ref{section:proofs}.\\
\qed



\medskip

We can get many other examples, replacing $h \colon X \to X$ with other surfaces.  For example, we could let $X$  be one of the following cases:
\begin{enumerate}
	\item A countable union of copies of $S_{g,n,1}$, an orientable surface of genus $g$ with $n$ punctures and $1$ boundary component.
	\item A countable union of copies of $N_{g,n,1}$, a non-orientable surface of genus $g$\footnote{Recall that in the non-orientable case, the genus of a surface is the number of projective planes instead of tori.} with $n$ punctures and $1$ boundary component.
\end{enumerate}
Then, in both cases, let $h$ again shift these components transitively.

For the first case, by the same argument as above, $M_h$ becomes the interior of a genus $2g+n$ handlebody together with an open annular subsurface in the boundary.  The glued surface will be a blooming Cantor tree, minus a discrete set accumulating on every end of the blooming Cantor tree and the mapping torus will be homeomorphic to the interior of a genus $2g+n+2$ handlebody.

For the second case, using the same argument as above, $M_h$ becomes one of the following, depending on the genus:
\begin{itemize}
	\item a 3-ball with one non-orientable 1-handle, plus $n$ orientable handles, if $g = 1$,
	\item a 3-ball with one non-orientable 1-handle and one orientable 1-handle, plus $n$ orientable handles, if $g = 2$,
	\item a 3-ball with one non-orientable 1-handle and $g-1$ orientable 1-handles, plus $n$ orientable handles, if $g = 2r+1$ with $r>1$,
	\item and a 3-ball with two non-orientable 1-handles and $g-2$ orientable 1-handles, plus $n$ orientable handles, if $g = 2r+2$ with $r>1$.
\end{itemize}
Independently of the genus, in this case the glued surface will be a non-orientable blooming Cantor tree\footnote{In this context, a non-orientable blooming Cantor tree is the infinitely non-orientable surface of infinite genus with a Cantor set of ends, all of them accumulated by genus and non-orientable.}, minus a discrete set accumulating on every end of the blooming Cantor tree and the mapping torus will be homeomorphic to the interior of a handlebody with genus $g+n+2$ handles.

Finally, we remark that we can also combine the two constructions here and in the previous section to build many more examples with tame mapping torus.

\section{End spaces of mapping tori and questions} \label{sec:final thoughts}

We finish this manuscript by briefly discussing end spaces of mapping tori, including the proof of Theorem~\ref{thm:orbits to ends} and some open questions.

\subsection{End spaces}

The fact that every orbit of $F_{\alpha*}$ on the space of ends $\Ends(\Sigma_\alpha)$ is dense, and that the mapping torus $M_{F_\alpha}$ has one end is not an accident.  The next proposition mentioned in the introduction provides the link between action on the end space and space of ends of the mapping torus.

%
%


\begin{proposition}
	\label{P:Ends prop}
Suppose $f \colon S\to S$ is any homeomorphism of a connected surface $S$.  Then the inclusion $i \colon S \to M_f$, given by $i(x) = (x,0)$ admits a continuous extension to the end compactification.  Moreover, the induced map on end spaces $i_*\colon \Ends(S) \to \Ends(M_f)$ is a continuous, $f_*$--invariant surjection; that is, $i_* \circ f_* = i_*$.
\end{proposition}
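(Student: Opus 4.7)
The unifying tool will be the suspension flow $(\psi_s)_{s \in \R}$ on $M_f$, i.e., the descent of $(x,u) \mapsto (x, u+s)$ from $S \times \R$. Every point of $M_f$ lies on a unique flow segment of duration less than one starting at the fiber $i(S)$, and the identity $\psi_1 \circ i = i \circ f$ holds. For the continuous extension, I would first observe that $i$ is a topological embedding with closed image: the preimage of $i(S)$ under the quotient $S \times [0,1] \to M_f$ is the closed set $S \times \{0,1\}$. Consequently, for any compact $K \subset M_f$, the preimage $i^{-1}(K)$ is closed in the compact set $K \cap i(S)$ and hence compact in $S$. Properness of a continuous map between locally compact Hausdorff spaces is equivalent to the existence of a continuous extension $\bar{i}$ between their end compactifications; restricting $\bar{i}$ to $\Ends(S)$ gives the desired continuous map $i_*$.

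For $f_*$--invariance, the arc $s \mapsto \psi_s(i(x))$, $s \in [0,1]$, joins $i(x)$ to $\psi_1(i(x)) = i(f(x))$. For any compact $K \subset M_f$, the thickening $L = \psi_{[-1,1]}(K)$ is still compact, and if $i(x) \notin L$ then the entire arc avoids $K$; indeed, $\psi_s(i(x)) \in K$ would give $i(x) = \psi_{-s}(\psi_s(i(x))) \in \psi_{[-1,1]}(K) = L$, a contradiction. Hence $i(x)$ and $i(f(x))$ lie in the same component of $M_f \setminus K$ whenever $x \notin i^{-1}(L)$, and taking $x$ through a defining sequence for an end $\varepsilon \in \Ends(S)$ yields $i_*(\varepsilon) = i_*(f_*(\varepsilon))$.

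The main content of the proposition is surjectivity. Given $\eta \in \Ends(M_f)$, choose a sequence $x_n \to \eta$ in $M_f$. Each $x_n$ can be written uniquely as $\psi_{t_n}(i(y_n))$ with $(y_n, t_n) \in S \times [0,1)$. The same flow-thickening argument, applied now to the arc $s \mapsto \psi_{-s t_n}(x_n)$, shows that $x_n$ and $i(y_n)$ lie in the same component of $M_f \setminus K$ whenever $x_n \notin \psi_{[-1,1]}(K)$, so $i(y_n) \to \eta$ as well. Properness of $i$ then forces $y_n$ to leave every compact subset of $S$, and after passing to a subsequence, compactness of $\Ends(S)$ lets us arrange that $y_n$ converges to some $\varepsilon \in \Ends(S)$; continuity of $\bar{i}$ then yields $i_*(\varepsilon) = \eta$. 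The main technical obstacle is the bookkeeping ensuring that $x_n$ and $i(y_n)$ really represent the same end of $M_f$ (not merely that both sequences escape every compact subset), but the flow-thickening observation handles this uniformly across all three steps.
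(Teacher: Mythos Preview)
Your argument is correct. It differs from the paper's proof in its organizing principle: the paper builds a single compatible pair of compact exhaustions, choosing $K_1\subset K_2\subset\cdots$ in $S$ with $f(K_i)\subset\mathrm{int}(K_{i+1})$ and setting $\mathcal K_i=\pi(K_i\times[0,1])$ in $M_f$. Properness of $i$ is then immediate, $f_*$--invariance is read off from the observation that $\delta\times\{1\}$ and $f(\delta)\times\{0\}$ define the same proper ray in $M_f$, and surjectivity follows because $\pi$ induces a surjection from the components of $(S\setminus K_i)\times[0,1]$ onto those of $M_f\setminus\mathcal K_i$. Your proof instead uses the suspension flow uniformly: the ``flow-thickening'' $K\mapsto\psi_{[-1,1]}(K)$ replaces the explicit exhaustion and simultaneously handles invariance and surjectivity. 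The paper's route is shorter and avoids any sequence extraction; your route is more dynamical, does not require constructing an exhaustion adapted to $f$, and makes the role of bounded flow-time transparent.
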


\begin{proof}
Exhaust $S$ by compact $K_1 \subset K_2 \subset \ldots$ such that $K_i \subset \mbox{int}(K_{i+1})$ for all $i$ and so that $f(K_i) \subset \mbox{int}(K_{i+1})$.  Let $\pi \colon S \times [0,1] \to M_f$ be the quotient map, then $\mathcal K_i = \pi(K_i \times [0,1])$ is a compact exhaustion of $M_f$.  It follows that the inclusion of a fiber $i \colon S \to M_f$ is proper, so it has a continuous extension to the space of ends.
Note that $K_i\times[0,1]$ is a compact exhaustion of $S\times[0,1]$. Moreover, $\pi \colon S \times [0,1] \to M_f$ defines, for each $K_i$ in the exhaustion, a surjective map between the set of connected components of the complement of $K_i \times [0,1]$  and the set of connected components of the complement of $\mathcal K_i$. It follows that $i_*$ is surjective. Now if we have two (equivalence classes of) proper rays $\gamma\neq\delta$ in $\Ends(S)$ such that $\gamma=f(\delta)$, then the proper rays $\delta \times \{1\}$ and $\gamma \times \{0\}$ are equivalent (as end defining rays) in the mapping torus $M_f$. Hence $i_* \circ f_* = i_*$.
\end{proof}



\textbf{Proof of Theorem~\ref{thm:orbits to ends}}. Note that for any manifold $X$ the space $\Ends(X)$ is Hausdorff, see Theorem 1.5 in~\cite{Raymond60}. Hence the fibers of $\partial i$ are closed.  By Proposition~\ref{P:Ends prop} $\partial i$ is surjective and the fibers are $\partial f$--invariant.  The assumption that every $\partial f$--orbit is dense therefore implies that the fiber over any point is the entire endspace of $S$, hence $M_f$ must have exactly one end.\qed



\subsection{Questions}

We end this paper with some open questions.

\begin{question}
	\label{Q:1}
	Does there exist a fibration of $V_2$ over the circle where fibers are all homeomorphic to the blooming Cantor tree surface?
\end{question}

\begin{question}
\label{Q:2}
Does there exist an infinite type surface $S$ such that no mapping torus of it is tame? If the answer is yes, is there a classification of surfaces that admit homeomorphisms with tame mapping torus?
\end{question}

Unlike surfaces, $3$--manifolds can have finitely generated fundamental group, without being tame.  Indeed, the classical Whitehead manifold  is a contractible open $3$--manifold that is not tame; see, e.g.~\cite[Chapter 3]{Rolfsen}. Thus, we have the following two questions.

\begin{question}
\label{Q:3}
Are there necessary and sufficient conditions on a surface homeomorphism $f \colon S \to S$ that ensure that $M_f$ is tame?
\end{question}

It would be interesting to see if there are connections to criteria considered by Bestvina-Fanoni-Tao \cite{BesFanTao}.

\begin{question}
\label{Q:4}
Are there surface homeomorphisms $f \colon S \to S$ such that $\pi_1(M_f)$ is finitely generated, but $M_f$ is not tame?
\end{question}

  \bibliographystyle{alpha}
  \bibliography{mybib}

\end{document}